\numberwithin{figure}{section}
\newcommand\xoutpars[1]{\let\helpcmd\xout\parhelp#1\par\relax\relax}
\newcommand\soutpars[1]{\let\helpcmd\sout\parhelp#1\par\relax\relax}
\long\def\parhelp#1\par#2\relax{%
  \helpcmd{#1}\ifx\relax#2\else\par\parhelp#2\relax\fi%
}
  \def\({}%
  \def\){}%
  \def\\{}%
  \def\infty{\042\036}%
\let\oldmarginpar\marginpar
\renewcommand\marginpar[1]{\-\oldmarginpar[\raggedleft\footnotesize #1]%
{\raggedright\footnotesize #1}}
\newtheorem{definition}{Definition}[section]
\newtheorem{lemma}[definition]{Lemma}
\newtheorem{proposition}[definition]{Proposition}
\newtheorem{claim}[definition]{Claim}
\numberwithin{equation}{section}
\numberwithin{table}{section}
\newtcolorbox{markbox}{%
     enhanced, breakable, size=minimal, parbox=false, after={\par}, 
     before upper={\indent}, colback=white, 
     overlay = {\draw[line width=2pt] (frame.north east) -|
                       ([xshift=3mm]frame.east)|-(frame.south east);},
     overlay first={\draw[line width=2pt] (frame.north east) -|
                           ([xshift=3mm]frame.south east);},
     overlay middle={\draw[line width=2pt] ([xshift=3mm]frame.north east) -- 
                              ([xshift=3mm]frame.south east);},
     overlay last={\draw[line width=2pt] ([xshift=3mm]frame.north east)|-
                          (frame.south east);},
}
\newenvironment{Figure}
  {\par\medskip\noindent\minipage{\linewidth}}
  {\endminipage\par\medskip}
\newtcolorbox{done}{%
     enhanced, breakable, size=minimal, colframe=white, parbox=false, after={\par\vspace{2\baselineskip}}, 
     before upper={\indent}, colback=white, 
     overlay = {\draw[line width=2pt] (frame.north east) -|
                       ([xshift=3mm]frame.east)|-(frame.south east);},
     overlay first={\draw[line width=2pt] (frame.north east) -|
                           ([xshift=3mm]frame.south east);},
     overlay middle={\draw[line width=2pt] ([xshift=3mm]frame.north east) -- 
                              ([xshift=3mm]frame.south east);},
     overlay last={\draw[line width=2pt] ([xshift=3mm]frame.north east)|-
                          (frame.south east);},
}
\newtcolorbox{adone}{%
     enhanced, breakable, size=minimal, colframe=white, parbox=false, after={\par\vspace{2\baselineskip}}, 
     before upper={\indent}, colback=white, 
     overlay = {\draw[densely dotted, line width=2pt] (frame.north east) -|
                       ([xshift=3mm]frame.east)|-(frame.south east);},
     overlay first={\draw[densely dotted, line width=2pt] (frame.north east) -|
                           ([xshift=3mm]frame.south east);},
     overlay middle={\draw[densely dotted, line width=2pt] ([xshift=3mm]frame.north east) -- 
                              ([xshift=3mm]frame.south east);},
     overlay last={\draw[densely dotted, line width=2pt] ([xshift=3mm]frame.north east)|-
                          (frame.south east);},
}
\newtcolorbox{plfok}{%
     enhanced, breakable, size=minimal, colframe=white, parbox=false, after={\par\vspace{2\baselineskip}}, 
     before upper={\indent}, colback=white, 
     overlay = {\draw[dashed, line width=2pt] (frame.north east) -|
                       ([xshift=3mm]frame.east)|-(frame.south east);},
     overlay first={\draw[dashed, line width=2pt] (frame.north east) -|
                           ([xshift=3mm]frame.south east);},
     overlay middle={\draw[dashed, line width=2pt] ([xshift=3mm]frame.north east) -- 
                              ([xshift=3mm]frame.south east);},
     overlay last={\draw[dashed, line width=2pt] ([xshift=3mm]frame.north east)|-
                          (frame.south east);},
}
\newcommand \bei {\begin{itemize}}
\newcommand \eei {\end{itemize}}
\newcommand \Span {\text{Span}}
\newcommand \be   {\begin{equation}}
\newcommand \bel {\be\label}
\newcommand \ee   {\end{equation}}
\newcommand \bse {\begin{subequations}}
\newcommand \ese {\end{subequations}}
\newcommand \supp {{\text{supp }}}
\newcommand \Id   {{\text{Id}}}
\newcommand \RR    {\mathbb{R}}
\newcommand \EE    {\mathbb{E}}
\newcommand \Tbar {\overline T}
\newcommand \Acal {\mathcal A}
\newcommand \Hcal    {\mathcal H}
\newcommand \Lcal    {\mathcal{L}}
\newcommand \Wbf {\mathbf W}
\newcommand \Ncal    {\mathcal{N}}
\newcommand \RN    {{\RR^N}}
\newcommand \eps   {\epsilon}
\newcommand \Rd    {{\mathbb{R}^d}}
\newcommand \del   \partial
\newcommand \la         \langle
\newcommand \ra     \rangle
\newcommand \plus {+}
\newcommand \RD {{{\mathbb R}^D}}
\newcommand \Ybar { \widetilde Y} 
\newcommand \dK {{\mathbf d}^K} 
\newcommand \Ebf {\mathbf E} 
\newcommand \ebf {\mathbf e} 
\newcommand \Pbf {\mathbf P} 
\newcommand \Extra {\textbf{Ext}} 
\newcommand \Inter {\textbf{Int}} 
\newcommand  \Pibf {\mathbf \Pi}
\newcommand \muexa {\mu^{\text{ex}}} 
\newcommand \exact {\text{ex}} 
\begin{document}

\bibliographystyle{plain}

\title{  A class of mesh-free algorithms for some problems 
\\
arising in finance and machine learning
} 

\author{Philippe G. LeFloch\footnote{Laboratoire Jacques-Louis Lions, Centre National de la Recherche Scientifique, Sorbonne Universit\'e, 4 Place Jussieu, 75252 Paris, France. 
Email: {\it contact@philippelefloch.org}.}
 \, and Jean-Marc Mercier\footnote{MPG-Partners, 136 Boulevard Haussmann, 75008 Paris, France. 
%\newline
 Email: {\it jean-marc.mercier@mpg-partners.com.}
\qquad 
\newline  
\hfill To appear in: {\it Journal of Scientific Computing} (2023). 
}}
\date{}

\maketitle

\begin{abstract} 

{
We introduce a numerical methodology,
 referred to as the {\sl transport-based mesh-free method}, 
 which allows us to deal with continuous, discrete, or statistical models in the same unified framework,
 and leads us to a broad 
class of numerical algorithms recently implemented in a Python library (namely, CodPy). 
Specifically, we propose a mesh-free discretization technique based on the theory of reproducing kernels and the theory of transport mappings, in a way that is reminiscent of Lagrangian methods in computational fluid dynamics. We introduce kernel-based
discretizations of a variety of differential and discrete operators (gradient, divergence, Laplacian, Leray projection, extrapolation, interpolation, polar factorization). The proposed algorithms are nonlinear in nature and enjoy quantitative error estimates based on the notion of discrepancy error, which allows one to evaluate the relevance and accuracy of, both, the given data and the numerical solutions.  
Our strategy is relevant when a large number of degrees of freedom are present as is the case in mathematical finance and machine learning. We consider the Fokker-Planck-Kolmogorov system (relevant for problems arising in finance and material dynamics) and a class of neural networks based on support vector machines. 
}
\end{abstract}

%%==================================================================================== 
  
  \vfill

\setcounter{secnumdepth}{3}
\setcounter{tocdepth}{1} 
\tableofcontents
 
 \vfill
%=====================================================================================

\newpage 
\section{Introduction}

{

\paragraph*{Background.}

We are interested in various applications in which mesh-free methods provide a flexible approach to design efficient computational methods. The kernel-based methodology advocated in the present paper, in the form described below, is motivated by earlier works in {  computational science} and statistics. We intend here to provide a unified framework which, importantly, also includes quantitative error estimates and allows us to evaluate the relevance and accuracy of, both, the given data and the numerical solutions. 

For a background on mesh-free approaches, we refer the reader to the textbooks \cite{BTA} and \cite{LiLiu} on the theory of reproducing kernels and their applications in computational statistics and fluid dynamics, respectively. In \cite{BBO,Liu-2003} a review is provided from the standpoint for generalized finite element methods.  
Designing algorithms that merge together techniques of machine learning and numerical methods for partial differential equations is an active domain of research; see~\cite{Haghighat,Liu-2016} for applications in computational solid mechanics, \cite{ZhouLi}
for applications in phase field dynamics, and \cite{BFBL} for the standpoint of peri-dynamics. In 
 \cite{Sirignano} a general treatment of partial differential equations is presented while in \cite{Koester} a so-called conforming window technique is introduced in order to handle non-convex computational domains. Many contributions in approximation theory are also available and we refer to the reviews \cite{FGE, Wendland,Wendland-book} and to \cite{Salehi} for the analysis of a moving mesh approach. 

%-------------------------------------------------
  
\paragraph*{Main purpose of this paper.}

The numerical methodology proposed here is mesh-free and is based on a combination of analytical and algorithmic techniques, including the theory of reproducing kernels and the theory of transport mappings. In comparison to earlier works, including our earlier investigations~\cite{PLF-JMM-1}--\cite{PLF-JMM-estimate}, our framework has the advantage of treating 
many applications of interest simultaneously 
and covers a wide class of differential and discrete operators, including projection operators,
 extrapolation/interpolation operators, and the polar decomposition of maps. 
 The {\sl transport-based mesh-free method}, as we call it, 
applies to problems arising, for instance, in data science or statistics, 
 and is relevant when a large number of degrees of freedom are present. Together with earlier works on the subject, 
 we contribute here to tackling the curse of dimensionality, namely the phenomenon that limits the efficiency of numerical algorithms when analyzing data in high-dimensional spaces. 

 After introducing elementary material about the theory of reproducing kernels 
which serves as a backbone for the development of numerical algorithms, we present (cf.~Section 2 below) kernel-based discretization formulas for differential operators and extrapolation/interpolation operators. By using such operators, we can guarantee that our numerical algorithms are consistent\footnote{ By consistency, we mean that the schemes can only converge to a solution to the given problem.}
{ 
 with the problems under consideration and,  for several concrete applications at least, 
  provide efficient tools\footnote{We do not perform here comparisons with existing algorithms (a task tackled in~\cite{PLF-JMM-SM-long}).}
 as demonstrated by our numerical tests. The theoretical analysis of the algorithms is outside the scope of the present paper, which is focused on the presentation of the techniques and applications.

More precisely, in this paper we consider three domains of application of independent interest, as follows. 
\bei 

\item Machine learning (Section 3): we revisit the formulation of Support Vector Machine arising in data science, and formulate an algorithm that combines together three levels and possibly several kernels.  {  While only preliminary illustrations are provided here, we refer the reader to \cite{PLF-JMM-SM-long} for specific applications to supervised and unsupervised machine learning. } 

\item Mathematical finance (Section 4): we introduce a kernel-based discrete scheme for the approximation of the (forward) Fokker-Planck equation and the (backward) Kolmogorov equation, respectively. 
{  
This strategy was applied in a concrete problem arising in finance engineering, for instance for the analysis of hedging strategies for net interest income and economic values of equity, as reported in \cite{PLF-JMM-review,JMM-SM}.  }

\item Polar decompositions (Section 5): we revisit the classical problem of the (signed) polar decomposition of mappings, and propose continuous and discrete algorithms.

\eei 

}

\paragraph*{  Features of the method.}

For each problem we rely on our notions (Section~\ref{section--2}) of kernel-based discretization, interpolation, and extrapolation. 
The discrete solutions depend upon the selection of a reproducing kernel, {  which should be chosen on a case-by-case basis depending upon the application of interest; for a preliminary discussion of this question, we refer the reader to our earlier study \cite{PLF-JMM-estimate} in which a comparison between different kernels is given.}
{  
Importantly, the proposed algorithms enjoy stability properties in the form of (a posteriori) error estimates which are based on the notion of discrepancy distance and discrepancy sequences analyzed earlier in \cite{PLF-JMM-estimate}.}
{
We emphasize that our schemes are nonlinear in nature, even when the problems under consideration are linear. This feature is well-known in fluid dynamics where for advection equations as well as contact discontinuities (which are linear waves in nature) are treated with nonlinear schemes, for instance with Harten's artificial compression method \cite{Sod}.}

Each algorithm is illustrated by numerical experiments which demonstrate the efficiency of the proposed approach. While the present paper describes the algorithms and contains only basic numerical tests, { the companion papers }
\cite{PLF-JMM-SM-Tutorial}--\cite{PLF-JMM-SM-long} are devoted to the implementation of our algorithms in a Python code (called CodPy, standing for the ``Curse of dimensionality in Python'').

The use of the reproducing kernel techniques, as advocated here and in \cite{PLF-JMM-1}--\cite{PLF-JMM-estimate}, 
has several motivations. An important feature implied by the use of reproducing kernels is the possibility of computing error estimates based on the notion of discrepancy distance. 
Indeed, this approach allows for a {\sl quantitative evaluation}
of the integration error or the discretization error associated with a given set of data and a numerical solution. 
This is a significant advantage in comparison to standard approaches used 
in the theory of neural networks and in mathematical finance. 
{  In fact, this is the main advantage of our approach.}
Error estimates are essential in applications in order to decide the relevance of 
a numerical output, and also provide 
an important feedback on the relevance of the mathematical model itself. {  
In addition, 
kernel engineering techniques fit well within such a framework, 
that is the construction of ``new'' kernels from sums, products, or compositions of given kernels.} 

{ 
We build upon our earlier study \cite{PLF-JMM-estimate} in which we numerically investigated the kernel-based mesh-free approximation of multi-dimensional integrals. }
A kernel typically captures regularity and qualitative properties of functions ``beyond'' the standard Sobolev regularity class, which is an essential flexibility for many problems under consideration in data science and
in finance. In  \cite{PLF-JMM-estimate}, we made comparisons between several numerical strategies, several choices of kernels, and we 
investigated the role played by the number of discretization points and the dimension of the problem. 
In short, quantitative error estimates are available for kernel-based algorithms, {
 in contrast to random algorithms which do not enjoy such deterministic estimates.}

\paragraph*{Outline of this paper.}

{

In Section~\ref{section--2}, 
to any given reproducing kernel, we associate mesh-free formulas for the discretization of integrals and differential operators; in this framework we also introduce discrete interpolation/extrapolation operators.  
Section~\ref{sec-trois} is devoted to their {  application to support vector machines, which play a key role
 in data science.} 
In Section~\ref{sec-44}, we introduce an algorithm for the Fokker-Plank-Kolmogorov system,
 which 
then allows us to tackle 
typical problems arising in mathematical finance. 
Finally, in 
Section~\ref{sec-55} we propose an algorithm for the signed-polar decomposition which is relevant in transportation theory. 
}

$$
\aligned
&   && \text{\bf Table of notation}
\\
& \Pbf^K_Y(f) && \text{ $K$-discrete projection} && \eqref{equa-216}
\\
& \nabla^K_Y             && \text{$K$-gradient operator}  && \eqref{Grad}
\\
& (\nabla^K_Y)^T   && \text{$K$-divergence operator} && \eqref{equa-diverg} 
\\
& \Extra_{X,Y}^K f_X  && \text{$K$-extrapolation operator} && \eqref{equation:EX-00}
\\
& \Inter_{X,Y}^K f_X  && \text{$K$-interpolation operator} && \eqref{IN}
\\
&  \dK(\mu_Y, \mu_X)  && \text{discrepancy error}  && \eqref{EEPO}
\endaligned
$$

}

%=================================================================================  

\section{Mesh-free discretization of integrals and differential operators}
\label{section--2} 

\subsection{Discrepancy and transport}

\paragraph*{Error integration estimate.}

We begin with an informal discussion restricted to the whole space $\RD$ (with $D \geq 1$), before presenting our discretization formulas for differential operators and extrapolation/interpolation operators. 
Introducing mesh-free, kernel-based discretization formulas is our first objective, since they will be at the heart of the algorithms of the following three sections. 
Observe that sufficient regularity is assumed throughout this paper, and we do not discuss any issue related to mathematical analysis. 

{ Throughout, we work with unstructured meshes in $\RD$, each being represented by a set\footnote{ The points are arbitrary at this stage while later they will be chosen in order to minimize an error functional such as \eqref{equation-EE} below.}
 of $N$ distinct points $Y = (y^1,\ldots,y^N)$. 
Specifically, given probability measure $\mu$ on $\RD$ we consider an approximation 
 by a sum of Dirac masses $\delta_{y^n}$ with equal weights (the choice of points depending upon the measure), that is, }
$
% \bel{equa-33mu}
\mu \simeq \delta_Y  = (1/N)\sum_{1 \leq n \leq N} \delta_{y^n}. 
$
The accuracy of this approximation is determined by a distance among measures for averages of continuous and globally integrable functions $\varphi: \RD \to \RR$.  To this end, a functional space denoted by $\Hcal^K(\RR^D)$ is defined and, in this space, we seek an estimate of the form\footnote{ In practice, the error estimate should also take into account the are approximation errors, in the form of a variance term taking into account the variability of random data.} 
\bel{equation:MCE}
\Big| \int_{\RR^D} \varphi \, d\mu - \frac{1}{N} \hskip-.1cm
\sum_{1 \leq n \leq N} \varphi(y^n) \Big| 
\leq \dK(\mu, \delta_Y) \|\varphi\|_{\Hcal^K(\RR^D)}. 
\ee
In the setup under consideration, the optimal coefficient $\dK(\mu, \delta_Y)$ can be expressed explicitly in terms of the kernel and the set of points.
%  (as stated in \eqref{equation-EE} below).

%--------------------------------------------

\paragraph*{Discrepancy error.}

More precisely, $\Hcal^K(\RR^D)$ is the so-called reproducing-kernel Hilbert space associated with a given kernel $K$. By definition, the latter is a continuous and symmetric function $K: (x,y) \in \RD \times \RD \mapsto \RR$ such that the matrix 
$K(Y,Y)= \big( K(y^n,y^m) \big)_{1 \leq n,m \leq N}$ is (symmetric) positive-definite for any set of distinct points.  
The Hilbert space of interest $\Hcal^K(\RR^D)$ defined by completion of $\Span \big\{K(\cdot, x) \, / \, x \in \RR^D \big\}$ consists of all linear combinations of the functions $K(x, \cdot)$ parametrized by $x \in \RR^D$.
This space is endowed with a scalar product (in \eqref{equa-538} below), and the coefficient $\dK(\mu, \delta_Y)$ in \eqref{equation:MCE} is referred to as the {\sl discrepancy error} (investigated earlier by the authors in \cite{PLF-JMM-estimate}) and reads 
\bel{equation-EE}
\aligned
   & \dK(\mu, \delta_Y)^2
 = \iint_{\RD \times \RD} K(x,y)d\mu(x)d\mu(y) 
  + \frac{1}{N^2} \hskip-.1cm
\sum_{n,m=1}^N \hskip-.2cm
K(y^n,y^m)- \frac{2}{N} \sum_{1 \leq n \leq N} \int_{\RR^D} K(x,y^n)d\mu(x). 
\endaligned
\ee
A set of points $\Ybar$ is called a {\sl sharp discrepancy set} if it achieves the global minimum of the functional, that is, $\Ybar = \arg \inf_Y \dK(\mu,\delta_Y)$. 
{
Even when the minimum is not achieved exactly, }
in numerical applications we seek a numerical approximation $\Ybar$ and use $\dK(\mu, \delta_{\Ybar})$
as our discrepancy error in \eqref{equation:MCE}. 
This is the setup in which we can evaluate the error made in our algorithms below. 

%-------------------------------------------- 

\paragraph*{Methodology and transport maps.}

An additional feature of our approach is as follows.   
{ 
In a given application we choose an admissible kernel $K$ and introduce the corresponding functional space $\Hcal^K(\RD)$. We then have various discrete approximation formulas as described in the rest of this section,}
 whose accuracy is determined by computing the error function 
in \eqref{equation:MCE}--\eqref{equation-EE}. Then, in order to optimize the convergence rate we can compute numerically an approximation of a sequence of points.
From a practical point of view, the discrepancy error can computed quite efficiently by 
a direct Monte-Carlo approach as we do in the tests presented in this paper. We thus have a concrete approach for evaluating the accuracy of our numerical solutions in a {\sl quantitative} way, which is essential in many applications.  

{ 
We can also apply an optimal transport step}
 \cite{Villani}, as follows. Let us consider a convex and open set $\Omega \subset \RD$ with piecewise smooth boundary {
 which, in the applications, we typically choose to be $(0,1)^D$.
 } 
Using transportation,  we reduce the study of \eqref{equation:MCE} to the same problem for the Lebesgue measure $\lambda = dx$ on $\Omega$. 
Namely,  consider the transport map $S:\Omega \mapsto \RR^D$ associated with a given measure $\mu$
(with convex support), that is, the unique map satisfying 
$
\int_{\RR^D} \varphi d\mu = \int_{\Omega} (\varphi \circ S)dx, 
$
% \qquad 
for all continuous and globally integrable functions $\varphi : \RD \to \RR$ 
which moreover takes the form $S = \nabla h$ for some convex maps $h$ ($\nabla$ denoting the gradient operator). 
The inequality \eqref{equation:MCE} is then equivalent to saying 
\bel{equation:MCEL}
\Big| \int_{\Omega} (\varphi \circ S) \, dx - \frac{1}{N}\sum_{1 \leq n \leq N} (\varphi \circ S)(x^n) \Big| 
\leq \dK(\lambda, \delta_{S^{-1} Y}) \|\varphi \circ S\|_{\Hcal^K(\Omega)},
\ee
{
where 
 $Y=(y^n) = (S(x^n))$
 and $\varphi \circ S$ denotes the composition of two maps. Here,
  $\Hcal^K(\Omega)$ denotes the space associated with the set $\Omega$ (see below). 
  }
 
%=====================================================================================

\subsection{Notation for the continuous framework}
\label{section-210}

\paragraph*{Admissible kernels.}

{
We are primarily interested in kernels defined on a bounded and convex set and, from now on, 
we focus our presentation mainly to this class (although we will allow ourselves to manipulate kernels defined on the whole of $\RD$). More precisely, while a prescribed kernel might be originally defined on $\RR^D$, 
then via a composition with a map $S$ we construct kernels $(k \circ S)(x,y)$ that have compact support. 
}
A reproducing kernel \cite{FGE,Wendland} provides a convenient way to generate a broad class of Hilbert spaces. 
A bounded and continuous function $K: \Omega \times \Omega \to \RR$ on a bounded open set  $\Omega \subset \RD$ 
is called an {\it admissible kernel} if it satisfies the 
 (1)  {\it symmetry property} $K(x,y) = K(y,x)$ for all $x, y \in \Omega$,   
{ and the
following 
(2) }{\it positivity property:} for any collection of $N$ distinct points $Y = (y^1, \ldots, y^N)$ in $\Omega$, the symmetric matrix $K(Y,Y) = \big( K(y^m,y^n) \big)_{1 \leq n,m \leq N}$ is positive definite in the sense that $a^T K(Y,Y) a > 0$ for all $a \in \RN \setminus \{0\}$. 
It is said to be {\it uniformly positive} if there exists a uniform constant $c>0$ such that for any collection of distinct points $Y$ one has $a^T K(Y,Y) a \geq c \, |a|^2$ for all $a \in \RN$.

Clearly, any admissible kernel satisfies  
\be
\aligned
& K(x,x) \geq 0, 
\qquad 
&&  K(x,y)^2 \leq K(x,x) \, K(y,y),  
\qquad x,y \in \Omega. 
\endaligned
\ee
%This im 
The non-negative function 
$
D(x,y)= K(x,x) + K(y,y) - 2 K(x,y) \geq 0 
$
can be interpreted as a ``pseudo-distance'' in view of the properties $D(x,x) = 0$ and $D(x,y) = D(y,x)$. 
{  
The triangle inequality 
does not hold in general; yet this pseudo-distance provides us 
with a functional of interest as far as quantitative error estimates are concerned.
On the other hand, for some choices of kernel the triangle inequality does hold, as is the case with $K_1(x,y) =1- |x-y|$ on $[0,1]$ 
corresponding to the standard $\ell^1$ distance $D_1(x,y)= 2 \, |x-y|$.}

%------------------------------------- 

{ 

\paragraph*{Functional spaces.}

Given an admissible kernel $K:\Omega \times \Omega \to \RR$, we now introduce the (infinite dimensional) space $\Hcal^K(\Omega)$, referred to as (cf. for instance \cite{BFBL})
the {\it reproducing Hilbert space} generated from the kernel $K$ and 
 consisting of the {\sl completion}  of the set of 
all finite linear combinations of the functions $K(x, \cdot)$ parametrized by $x \in \Omega$, 
which we endow with the scalar product and norm defined as follows. 
To any two (finite, say) combinations 
$\varphi = \sum_{1 \leq m \leq N} a_m K(\cdot, y^m)$ and $\psi = \sum_{1 \leq n \leq N} b_n K(\cdot, y^n)$
for any $N$ distinct points $Y = (y^1,\ldots,y^N)$ in $\Omega$, 
we associate the bilinear expression (with $a = (a_m)$, etc.) 
\label{equa-bilin-norm}
\bel{Npsi}
\la \varphi, \psi \ra_{\Hcal^K(\Omega)} 
 = a^T K(Y,Y) b 
= 
 \sum_{1 \leq m \leq N} \sum_{1 \leq n \leq N} a_m b_n K(y^m,y^n), 
\ee
which endows the space with a Hilbertian structure with norm  
$\| \varphi \|_{\Hcal^K(\Omega)}^2  =   a^T K(Y,Y) a$. 
(Here, both $Y$ and $N$ may be arbitrary and thee limit $N \to +\infty$ is taken by completion.) 
By construction, the  reproducing kernel property holds in $\Hcal^K(\Omega)$: 
\bse
\label{equa-538}
\be
\la K(\cdot, x), K(\cdot, y) \ra_{\Hcal^K(\Omega)} = K(x,y), \qquad x, y \in \Omega. 
\ee
From the Cauchy-Schwarz inequality, it follows that for any $\varphi \in \Hcal^K(\Omega)$ and $x \in \Omega$ 
\bel{539} 
\aligned
|\varphi(x)| 
= | \la K(\cdot, x), \varphi \ra_{\Hcal^K(\Omega)}|  
& \leq \| K(\cdot, x) \|_{\Hcal^K(\Omega)} \, \|\varphi\|_{\Hcal^K(\Omega)}
 = \sqrt{K(x,x)} \, \|\varphi\|_{\Hcal^K(\Omega)}.
\endaligned
\ee   
\ese
Since the kernel is continuous and bounded, the ``point evaluation'' $\varphi \mapsto \varphi(x)$ is thus a linear and bounded functional on $\Hcal^K(\Omega)$ (for any $x \in \Omega$). 
} 
We also observe that 
$
|\varphi(x) - \varphi(y)| 
\leq
 D(x,y) \|\varphi\|_{\Hcal^K(\Omega)}$.
{
Since $K$ and thus $D$ are continuous in $\Omega$, all functions in $\Hcal^K(\Omega)$ 
are at least continuous.
}

%--------------------------------

\paragraph*{Discrepancy functional.}

{
Consider the integration error in \eqref{equation:MCE}, which we split into two parts, namely (1) 
a contribution estimating the {\sl regularity of the function} $\varphi$ under consideration, which is is independent of the choice of the interpolation points, and
(2)
a contribution depending solely upon the kernel $K$ and the mesh points, which is independent of the choice of the function. 
}
Various equivalent formulations of the second term can be derived \cite{PLF-JMM-estimate}, 
in physical variables, in spectral variables associated with the kernel, as well
as in terms of 
{
Fourier variables associated with a discrete lattice. }
Although these formulations are in principle equivalent, 
they shed a different light on the problem.   
For the factorization in physical variables, we find \eqref{equation-EE} as stated earlier. 

%-------------------------------------------------------------------------------------------------------------------------------------- 

\subsection{Definition of discrete projection and differential operators}
\label{section-21}

\paragraph*{A scale of finite dimensional spaces.}

{

Concerning mesh-free strategies for the approximation of the gradient and Hessian operators, we refer to \cite{Liu-new1} for applications in structural dynamics as well as 
\cite{Liu-new2} for applications in fluid dynamics. The reader can also refer to
\cite{Afra-new,Nakano} for applications to heat conduction problems, while \cite{Chen-new} provides a state-of-the art review of the subject. We provide a here a systematic standpoint which encompasses a wide variety of discrete operators and aims at applications beyond the field of partial differential equations. 
}

Let us summarize our notation in the finite dimensional setup, in which we now fix an integer $N$, 
a finite collection of points $Y = (y^1, \ldots, y^N)$ in $\Omega$, together with an admissible kernel $K$ on 
$\Omega$. 
Based on these data, we define the finite dimensional vector space $\Hcal^K_Y(\Omega)$ consisting of all linear combinations of the so-called basis functions $x \mapsto K(x, y^n)$, that is, 
% \bel{equa:HKY}
$\Hcal^K_Y(\Omega)= \big\{\sum_{1 \leq m \leq N} a_m K(\cdot, y^m) \,  / \,  a = (a_1, \ldots, a_N) \in \RR^N  \big\}. 
$
Since the kernel $K$ is continuous, $\Hcal^K_Y(\Omega)$ embeds into the space of all continuous functions on $\Omega$.  
We consider the bilinear expression \eqref{equa-bilin-norm} and the corresponding finite-dimensional 
Hilbert space $\Hcal^K_Y(\Omega)$   with norm  
$\| \varphi \|_{\Hcal^K_Y(\Omega)}^2 =   a^T K(Y,Y) a$. 
The reproducing kernel property (immediate from \eqref{Npsi}) reads 
%\bel{eq:repkp} 
$
\la K(\cdot, y^m), K(\cdot, y^n) \ra_{\Hcal^K_Y(\Omega)} 
= K(y^m, y^n)$,
%\ee
and 
allows one to relate the coefficients $a_m$ of the decomposition of a function $\varphi = \sum_{1 \leq m \leq N} a_m K(\cdot, y^m)$ to its scalar product with the basis functions, namely
$ 
\varphi(y^n) 
%= 
=
\sum_{1 \leq m \leq N} a_m K(y^m, y^n) = a^T K(Y, y^n). 
$
{ This is nothing but the standard scalar product of the vectors $a$ and $K(Y, y^n)$ (for any given $n$). 
}

%------------------------- 

\paragraph*{$K$-projection.} 

{
Given any continuous function $f$ on $\Omega$, }
the vector $f_Y = \big( f(y^1), \ldots, f(y^N) \big)$ consists of the values of this function at the given points. We define $\Pbf^K_Y(f)$ in the space $\Hcal^K_Y(\Omega)$ by   reconstructing a suitable element of $\Hcal^K_Y(\Omega)$ from the given vector $f_Y$, specifically we set 
\begin{subequations}
\be
\Pbf^K_Y(f) (x)= a^T K(x, Y) = \sum_{1 \leq n \leq N} a_n K(x, y^n),  
\qquad x \in \Omega, 
\qquad a=  K(Y,Y)^{-1} f_Y. 
\ee
Clearly, $(\Pbf^K_Y \circ \Pbf^K_Y)(f) = \Pbf^K_Y(f)$ and, moreover, $\Pbf^K_Y(\varphi) = \varphi$ for any function $\varphi = a^T K(\cdot, Y) \in \Hcal^K_Y(\Omega)$. 
The norm of the $K$-projection operator $\Pbf^K_Y$ is 
\be
\| \Pbf^K_Y(f) \|_{\Hcal^K_Y(\Omega)}^2 =  f_Y^T K(Y,Y)^{-1} f_Y. 
\ee
\end{subequations}

A basis of functions is naturally associated with the discrete space $\Hcal^K_Y(\Omega)$,
and 
{
consists of the set of $N$ functions $\theta_Y^n: \Omega \to \RR$ taking the values $0$ or $1$ at the points of the set $Y$ such as each $\theta_Y^n$ interpolates a given vector in the canonical basis of $\RR^N$. 
}
 Precisely, using the notation $\theta_Y = (\theta^1_Y, \ldots, \theta^N_Y)$, we define the {\bf $K$-basis} (as a whole) by  
\bel{PU}
\theta_Y (x) = K(Y,Y)^{-1} K(Y, x), \qquad x \in \Omega. 
\ee
{ 
As expected,  it follows that, {for a family of distinct points and a positive kernel, we can write} 
}
\bse
\label{equa-215}
\be
\big( \theta^n_Y(y^m) \big)_{1 \leq n,m \leq N} = K(Y,Y)^{-1} K(Y,Y) = \Id
\ee
(the identity matrix) 
{ or, equivalently,
} $\theta_Y^n (y^m) = \delta_{nm}$ (using the Kronecker notation). 
On the other hand, the scalar product of any two basis functions is 
\bel{410}
\big\la \theta_Y^m,  \theta_Y^n \big \ra_{\Hcal^K_Y(\Omega)} 
=  K^{-1}(y^m, y^n),
\ee 
where $K^{-1}(y^m, y^n)$ stands for the $(n,m)$-coefficient of the matrix $K(Y,Y)^{-1}$. 
\ese
Importantly, this partition of unity is useful in expressing the {\bf $K$-discrete projection} of a function $f: \Omega \to \RR$, namely 
\bel{equa-216}
\Pbf^K_Y(f) = \sum_{1 \leq n \leq N} f(y^n) \theta_Y^n.
\ee
 
 %-----------------------------------------------------------------------------------------------------------------------------------------------------
 
\paragraph*{$K$-gradient operator.}

{ 

\bse
Once more, we are given a set of points $Y=(y^1, \ldots, y^N)$ in $\Omega$ and we are interested in functions in 
the associated discrete space $\Hcal^K_Y(\Omega)$. 
Given an arbitrary (but at least continuous) function $h: \Omega \to \RR$, we define the $K$-gradient $\nabla^K_Y h$ as a suitable linear combination of the gradient of the basis functions with 
the weights $h_Y = h(Y) = (h(y^1), \ldots, h(y^N)) \in \RN$, namely, it is   
 the $D$-vector-valued function\footnote{ This expression is motivated from \eqref{PU} and \eqref{equa-215}.} 
\be
(\nabla^K_Y h)(x)  
= \sum_{n=1, \ldots, N} h(y^n) \nabla \theta^n   (x)
=   (\nabla K_Y)(x)  K(Y,Y)^{-1} h_Y \in \RR^D, 
\qquad x \in \Omega, 
\ee 
where $(\nabla K_Y)(x) = \big( \nabla K(y^1, x), \ldots, \nabla K(y^N, x) \big) \in \RR^{N\times D}$ is a rectangular matrix. 
This definition can be cast in the form of an operator (by suppressing the dependency in $h$), namely with 
\bel{grad}
\nabla K_Y =
\nabla K(Y,Y) = \big( \del_d K(y^n, y^m) \big)_{d = 1, \dots, D; n,m=1, \ldots, N} \in \RR^{D \times N\times N},
\ee
\ese
 the {\bf $K$-gradient operator} $\nabla^K_Y$ reads 
\bel{Grad}
\nabla^K_{Y} = \big( \del_d K(Y,Y) K(Y,Y)^{-1} \in \RR^{N \times N}\big)_{d=1, \ldots, D} = \nabla K_Y K_Y^{-1}     \in \RR^{D \times N \times N}.
\ee

}

%---------------------------------------------------------------------------------------------------------------------------------

\paragraph*{Other $K$-operators.}

We can also define many other integro-differential operators of interest in the applications. 
For instance, we can introduce the {\bf $K$-divergence operator} $(\nabla^K_Y)^T$, where the transposition is defined with respect to 
the standard Euclidian product, i.e.~for any vectors $h_Y \in \RR^N$ and $f_Y \in \RR^{N\times D}$ we write 
\bel{equa-diverg} 
<h_Y, (\nabla_Y)^T f_Y>_{\RR^{N }}  
= 
<\nabla^K_Y h_Y, f_Y>_{\RR^{N \times D}}. 
\ee
% a 
{
%The co    
Observe that this notion 
% is n 
depends upon the choice of the kernel. 
}

{
Furthermore, 
the {\bf $K$-Laplacian operator} is defined as $\Delta^K_Y = \nabla^K_Y \cdot \nabla^K_Y \in \RR^{N \times N}$ and is
 the discrete counterpart of the standard Laplacian operator $\Delta$. 
 }
{ We apply the $K$-gradient operator $\nabla^K_Y \in \RR^{D \times N \times N}$ twice and sum over in the index $d=1, \ldots, D$ and, more explicitly, 
\be
\Delta^K_Y
= \sum_{d=1, \ldots, D}
\big( \del_d K(Y,Y) K(Y,Y)^{-1} 
\big) 
 \big( \del_d K(Y,Y) K(Y,Y)^{-1}  
\big) 
\ee
It involves 
a contraction with respect to the indices $d=1, \ldots, D$ and matrix multiplication in the indices $n=1, \ldots, N$. 
}
{ Along exactly the same lines, the {\bf $K$-Hessian operator} is defined as $\nabla_Y^{K, 2} = \nabla^K_Y \otimes \nabla^K_Y 
 \in \RR^{D \times D \times N \times N}$
 }
 and 
  is the discrete counterpart of the standard Hessian operator $\nabla^2$ and 
involves a standard matrix multiplication in the two indices ranging $1, \ldots, N$.  

\bse
{ 
Finally, $\mathcal{L}_Y$ (which is an approximation to the Leray operator, i.e.~the projection on divergence-free vector fields)
and $\Pibf^K_Y$ are introduced by setting, 
for any discrete vector field $S_Y \in \RR^{N \times D}$,  
}
\bel{hodge1}
   \Pibf^K_Y S_Y = \big(\nabla^K_Y \big((\nabla^K_Y)^T \nabla^K_Y \big)^{-1} (\nabla^K_Y)^T S_Y \big), 
   \qquad \zeta_Y = \mathcal{L}_Y S_Y = \big( \Id - \Pibf^K_Y \big) S_Y, 
\ee
and we arrive at the orthogonal {\bf $K$-Hodge decomposition} 
\be
\label{hodgeY}
S_Y = \nabla^K_Y (h_Y + f_Y) + \zeta_Y,
\qquad
 (\nabla^K_Y)^T  \zeta_Y  = 0, 
\qquad
(\nabla^K_Y)^T \nabla^K_Y f_Y= 0, 
\ee
where $f_Y$ represents a discrete harmonic component. 
 \ese
{
This decomposition is modeled upon the standard Hodge decomposition, characterized by 
%
%\ 
$S = \nabla (h + h_0) + \zeta$
and
$
 \nabla \cdot \zeta \mu  = 0$, 
together with 
$
 \zeta \cdot \eta = 0$ and 
 $\nabla \cdot(\nabla h \mu) = 0$,  
% \qq 
%
where $h_0$ has compact support and $\mu \simeq \frac{1}{N} \sum_{n=1}^N \delta_{y^n}$.
 }

%------------------------------------------------------------------------------------------------------------------------------------

\subsection{Definition of discrete interpolation and extrapolation operators}
\label{sect--244}

\paragraph*{Notation.}

 {
An admissible kernel $K$ is fixed throughout. 
Consider an arbitrary collection of $M$ points $Y$ in $\Omega \subset \RD$ and a vector-valued function $f  =f(y) \in \RR^P$, which provide us with the two sets of data 
$Y \in \RR^{M \times D}$ and $f_Y \in \RR^{M \times P}$. 
Consider also a collection $X$ of $N$ points in $\Omega \subset \RR^D$ with $N << M$ 
{ 
and let us associate with these two sets
the corresponding functional spaces $\Hcal^K_Y$ and $\Hcal^K_X$.
}
% w 
Finally we introduce the rectangular matrix 
{
consisting of all relevant values of the kernel 
$
K(X,Y) = \big( K(x^n,y^m) \big)_{1 \leq n \leq N\atop 1 \leq m \leq M} \in \RR^{N \times M}$, 
together with its discrete gradient 
%\ 
   $\nabla K(X,Y)= \big( \nabla_{y^m} K(x^n,y^m) \big)
   \in \RR^{N \times D \times M}.
$
}}
 
%---------------------------------------------------

\paragraph*{$K$-extrapolation.}

Suppose that we are only given some data $f_X \in  \RR^{N \times T}$ on a small set of points $X$. Then we can {
extrapolate these values to a larger set of points}
 $Y$ by introducing\footnote{ Here, the term "extrapolation" is used when some data defined on a ``small' set are extended to a "large" set. Statistician reader might prefer a different terminology here.} 
\bel{equation:EX-00}
\Extra_{X,Y}^K f_X = K(Y,X) K(X,X)^{-1} f_X \in \RR^{M \times T}.
\ee
Similarly, the $K$-extrapolation of the gradient operator on $Y$ is defined as 
\bel{equation:NEX}
\nabla \Extra_{X,Y}^K f_X = (\nabla K)(Y,X) K(X,X)^{-1} f_X \in \RR^{M \times D \times T}. 
\ee

%------------------------------

\paragraph*{$K$-interpolation.}

\bse
Conversely, suppose that we are given some data $f_X \in  \RR^{N \times P}$ on a set $X \in \RR^{N \times D}$. Then for any collection $Y \in  \RR^{M \times D}$, we define $\Inter_{X,Y}^K f_X \in  \RR^{M \times P}$ 
{
by the solution of the minimization problem 
}
\be
\Inter_{X,Y}^K f_X = \arg \inf_{g_Y \in \RR^{M \times P}} \| K(Y,X)K(Y,Y)^{-1} g_Y - f_X\|,
\ee
\ese
which yields us 
\bel{IN}
f_Y = \Inter_{X,Y}^K f_X = K(Y,Y)K(X,Y)^{-1} f_X \in \RR^{M \times P}. 
\ee
Here, by definition, $K(X,Y)^{-1}$ is the standard (least-square) inverse matrix, characterized by the conditions
$$
\aligned
K(X,Y)^{-1} &= \big( K(Y,X) K(X,Y)\big)^{-1} K(Y,X) \in \RR^{M \times N}, \qquad M \leq N,
\\
K(X,Y)^{-1} &= K(Y,X) \big( K(X,Y) K(Y,X)\big)^{-1} \in \RR^{M \times N}, \qquad N \leq M.
\endaligned
$$
In particular, we have the identity $\Extra_{Y,X}^K f_Y = f_X$.

%------------------------------

\paragraph*{$K$-projections.}

Interpolation and extrapolation are particular cases of the projection operators that we define now. 
Let $(X,Y,Z) \in (\RR^{M \times D},\RR^{N \times D},\RR^{P \times D})$ be collections of $D$-vectors, 
and let us assume the restriction $M \leq N$. 
Then we define $\Pibf_{X,Y,Z}$  by 
\bel{equation:PO}
\Pibf_{X,Y,Z}^K = \Extra_{Y,Z}^K \Inter_{X,Y}^K f_Y = K(Z,Y) K(X,Y)^{-1}. 
\ee
For instance, $\Extra_{X,Y}^K = \Pibf^K_{X,X,Y}$ and $\Inter_{X,Y}^K = \Pibf^K_{X,Y,Y}$. A 
particularly interesting operator is $\Pibf^K_{X,Y,X}$, whose kernel ``contains the information''
 lost from approximating a function from a larger set $X$ to a smaller set $Y$.

%------------------------------

\subsection{A steepest descent algorithm}

\paragraph*{$K$-discrepancy error.}

Consider two sets of points $X,Y$ as above. In agreement with \eqref{equation-EE}, we compute the discrepancy error between the discrete measures $\mu_Y$ and $ \mu_X$ as
\bel{EEPO}
\dK(\mu_Y, \mu_X)^2
 = \frac{1}{M^2}  \sum_{l,m=1}^M K(y^l,y^m) + {1 \over N^2} \sum_{n,q=1}^N K(x^n,x^q)
- \frac{2}{N M} \sum_{n=1}^N \sum_{m=1}^M  K(x^n,y^m).
\ee
The notion of discrepancy error $\dK(\mu_Y, \mu_X)^2$ also allows us to define {\sl transportation maps,}
{ 
and a path of points $t \mapsto (X_t)$, starting from $X_0 = X$ and reaching   
}
$X_\infty = \arg_{X \in \RR^{N \times D}} \inf \dK(\mu_Y, \mu_X)^2$.
Precisely, for each $t$ we explicitly compute the gradient of $\dK(\mu_Y, \mu_{X_t})^2$ with respect to $X_t$ as
\be
\nabla_{x^n} \dK(\mu_Y, \mu_{X_t})^2
 = \frac{2}{N^2} \sum_{q=1}^N \nabla_{x^n} K(x_t^n,x_t^q)  
- \frac{2}{N M} \sum_{m = 1}^M \nabla_{x^n} K(x_t^n,y^m).
\ee
In view of this, we introduce the following semi-discrete scheme corresponding to a steepest descent algorithm: 
\bel{SDA}
\frac{d}{dt} x^n_t =  \frac{2}{N M} \sum_{m = 1}^M \nabla_{x^n_t} K(x^n_t,y^m) - \frac{2}{N^2}\sum_{q=1}^N  \nabla_{x^n_t} K(x^n_t,x^q_t), 
\ee
with the initial data $X_0 = X$. Summarizing our notation, the scheme can be written as
\be 
\frac{d}{dt} X_t =  \frac{2}{N M} \nabla_{X_t} K(X_t,Y)_{3} - \frac{1}{N^2} \nabla_{X_t} K(X_t,X_t)_{3}, 
\ee
{ 
where the notation $_{3}$ denotes the trace in the third index, namely 
$\nabla_{X_t} K(X_t,Y)_{3}$ denotes the set of points $\sum_{m = 1}^M \nabla_{x^n_t} K(x^n_t,y^m)$.
%%%%%%%%%%%
}
The trajectories $t \mapsto X_t \in \RR^{N \times D}$ follow characteristics corresponding to optimal transportation maps.
 
This algorithm also applies to probability distributions $x \mapsto \mu(x)$ that need not be discrete, and 
using the discrepancy error function \eqref{equation-EE}, we can compute
\bel{EEPO2}
\dK(\mu, \mu_{X_t})^2
 = \int \int K(x,y) \, d\mu(x)d\mu(y) + {1 \over N^2} \sum_{n,m}^N K(x_t^n,x_t^m)
- \frac{2}{N} \int \sum_{n=1}^N K(x_t^n, x) \, d\mu(x),
\ee
so that 
\bel{EEPO3} 
\frac{d}{dt} \dK(\mu, \mu_{X_t})^2 =  \langle \frac{2}{N^2} \nabla_{X_t} K(X_t,X_t)_{3} - \frac{2}{N} \int \nabla_{X_t} K(x,X_t) d\mu(x), \frac{d}{dt} X_t \rangle. 
\ee
The proposed scheme can be written as 
\bel{EEPO4}
\frac{d}{dt} X_t =  \frac{2}{N} \int \nabla_{X_t} K(x,X_t) d\mu(x) - \frac{2}{N^2} \nabla_{X_t} K(X_t,X_t)_{3}
\ee
and, therefore, requires the computation of the integral in the right-hand side.

%-----------------------------------------------------------

\paragraph*{Fitting functions.}

\bse
Given some data $Y \in \RR^{M \times D}$ and $f_Y \in \RR^{M \times P}$,
{ 
we can consider the projection operator determined from \eqref{equation:PO}, that is,  
$\Pibf^K_{Y,X,Y}$ denoted for short by $\Pibf^K_{X,Y}$.
}
Starting from $X_0 = X \in \RR^{N \times D}$, fitting a function corresponds to determine a distribution of points $X_\infty$ achieving the following minimum: 
\be
\| f_Y - \Pibf^K_{X_\infty,Y}f_Y\|^2 = \inf_{X \in \RR^{N \times D}} \| f_Y - \Pibf^K_{X,Y}f_Y\|^2. 
\ee
We rewrite this minimization problem as a constrained problem, 
{ 
namely (with $Y$ and $f_Y$ fixed): }
\be
\| f_Y - \Pibf^K_{X_\infty,Y}f_Y\|^2 = \inf_{X \in \RR^{N \times D}} \| f_Y - \Extra_{Y,X}^K f_X\|^2, 
\qquad f_X = I(X,Y)f_Y, 
\ee
\ese
where $ \Extra^K$ denotes the $K$-extrapolation operator. During the minimization step, we consider that $f_X \in \RR^{N \times P}$ is independent of $X$ and we take the constraint into account and we compute
\be
\nabla_{X} \| f_Y - \Extra_{Y,X}^K f_X\|^2 = \langle \Extra_{Y,X}^K f_X - f_Y, \nabla_{X} \Extra_{Y,X}^K f_X \rangle \in \RR^{N \times D}, 
\ee
{ 
where the right-hand side is the multiplication  between $\Extra_{Y,X}^K f_X - f_Y \in \RR^{M \times P}$ and the discrete operator 
$
\nabla_{X} \Extra^K_{Y,X} f_X \in \RR^{M \times N \times D \times P}.
$
}

%---------------------------------------------------------------------

\paragraph*{Steepest descent algorithm.}

\bse

{Based on the error expression 
$
\ebf(X,Y) = \Extra_{Y,X}^K f_X - f_Y \in \RR^{M\times P}, 
$
we define a family of points $(X_t)$ (parametrized by $t$) 
by solving the steepest descent algorithm
\be
\frac{d}{dt} X_t = -\langle \ebf(X_t,Y), (\nabla_{X_t} \Extra^K)_{Y,X_t)} f_{X_t} \rangle_{\RR^{M \times P}}.
\ee
Using \eqref{equation:NEX}, we compute 
%\be
%\aligned
$\nabla_{X} \Extra_{Y,X}^K
%&  
$
and  our scheme reads 
\be
\aligned
\frac{d}{dt} X_t 
& = - \langle \ebf(Y,X_t), (\nabla_{X_t} K)(Y,X_t) K(X_t,X_t)^{-1} f(X_t) \rangle_{\RR^{M \times P}} 
\\
& \quad +\langle K(X_t,X_t)^{-1} K (X_t,Y) e(X_t,Y), (\nabla_{X_t} K)(X_t,X_t) K(X_t,X_t)^{-1} f(X_t) \rangle_{\RR^{N \times P}}, 
\endaligned
\ee
}
and we also observe that 
$
K (X,Y) \ebf(X,Y) 
% = 
= 0. 
$
We can write our scheme component-wise as 
\be
\frac{d}{dt} x_t^n = -\sum_{m,p = 1}^{M,P} \ebf(X_t,Y)_{m,p}  (\nabla_{x_t^n} K)(y^m,x_t^n) cf(X)_{n,p}. 
\ee

\ese

%---------------------------------------------------------------------------------------------------------------------------------

%==============================================================  

\section{An algorithm for support vector machines in machine learning} 
\label{sec-trois}

\subsection{Error estimates for support vector machine}
\label{sec-31}

\paragraph*{Standpoint in this section.}

Our aim in this section is to formulate a support vector machine in the context of the kernel theory in Section 2, and
 emphasize the role of error estimates based on sharp discrepancy sequences. Interestingly, our formulation uses an intermediate space (of size denoted by $N_y$, below) which avoids the computation of the inverse of the kernel
 and therefore leads us to an algorithm that is linear with respect to, both, the input and the output variables. 
In the standard formulation, there is no such pivot space, while we observe here that this notion simplifies the computational complexity. 
 
We thus consider neural networks based on the concept of support-vector machine (SVM). Our presentation in this section\footnote{ 
Our notation may be unconventional for a statistician reader, as we use $Y$ for a
 regression variable (and not for a dependent variable).} 
 is based on the notion of discrete projections, interpolation, and extrapolation operators introduced in the previous section. 
Consequently, our algorithms come with quantitative error estimates of the type we investigated in \cite{PLF-JMM-estimate}.

%--------------------------------------- 

\paragraph*{Setup of interest.}

An admissible kernel $K : \RR^D \times \RR^D \to \RR$ being fixed once for all, we work with several 
sets of points, denoted by $X \in \RR^{ N_x \times D}$, $Y \in \RR^{ N_y \times D}$, and $Z \in \RR^{N_z \times D}$. {
 Broadly said, our aim is to determine the values $f(Z)$ of an unknown function $f$ on a ``large'' set of points $Z$ 
from the knowledge of its values $f(X)$ on a ``small'' set of points. The role of the set of points $Y$ is secondary, but very useful in practice (see paragraph above on the pivot space). 
}
Using the kernel, we compute the $(N_x \times N_y)$ Gram matrix $ K(X,Y) = \big( K(x^n,y^m)\big)_{1 \leq n\leq N_x \atop 1 \leq n\leq N_y}$ associated with the sets $X$ and $Y$.
Assuming that $N_y \le N_x$, a \textbf{feed-forward SVM}, by definition, is the $K$-projection operator 
associated with the set of points $X,Y,Z$ defined by the matrix
\bel{equa-projection}
P(X,Y,Z) =  K(Z,Y) \, K(X,Y)^{-1} \in \RR^{N_z \times N_x}, 
\ee
where the inverse of a (rectangular) matrix $A$ is defined as usual by $A^{-1} = (A^TA)^{-1}A^T$. 

To any vector-valued function $\varphi: \RD \to \RR^P$ and any set of $D$ points $X$, we associate the matrix $\varphi(X) \in \RR^{N_x \times P}$, and we introduce its $K$-projection or \textbf{prediction} as 
\be
\varphi_Z =  P(X,Y,Z) \varphi(X) \in \RR^{N_z \times P} 
\ee
and we also have its discrete gradient  
$
\nabla^K \varphi_Z = (\nabla_z K)(Z,Y) \, K(X,Y)^{-1} \varphi(X) \in \RR^{D \times N_z \times P}. 
$
{
As explained in the previous section, 
}
from $\nabla^K$ we can also compute other differential operators such as
 $\Delta^K$ and 
 $\nabla^{K,2}$.

%-----------------------------------------------------------------------------------

\paragraph*{Error estimate.}
  
Given a SVM denoted by $\mathcal{P}(x,y,z)$, the { error estimate available} the expectation of a function $\varphi$ reads 
\bel{equa344} 
\Big| \frac{1}{N_z}\sum_{n=1}^{N_z} \varphi(z^n) - \frac{1}{N_z}\sum_{n=1}^{N_z} \varphi_{z^n} \Big| \le \Big( \dK\big(\mu_X,\mu_Y\big) + \dK\big(\mu_Y,\mu_Z\big) \Big) \|\varphi\|_{\Hcal^K(\Rd)}, 
\ee
in which the discrepancy error is now given by the discrete distance 
\be
\dK\big(\mu_X,\mu_Y\big) 
= \frac{1}{N_x^2}\sum_{n=1,m=1}^{N_x,N_x} K\big(x^n,x^m\big) 
+ \frac{1}{N_y^2}\sum_{n=1,m=1}^{N_y,N_y} K\big(y^n,y^m\big) 
- \frac{2}{N_x N_y}\sum_{n=1,m=1}^{N_x,N_y} K\big(x^n,y^m\big). 
\ee
The bound in \eqref{equa344}  involves now {\sl two} contributions, namely the relative distances between the three distributions $\mu_X, \mu_Y, \mu_Z$, with the notation 
$\mu_X = {1 \over N} \sum_{=1}^{N_x} \mu_{x^n}$. 
On the other hand, the norm arising in \eqref{equa344} can be approximated by 
$\|\varphi\|_{\Hcal^K }^2 \simeq \varphi_Y^T K(Y,Y)^{-1} \varphi_Y$.

% ------------------------------------

\paragraph*{Neural networks as kernel methods.}

{
Our setup encompasses the classical strategy adopted in the theory of deep learning, which is based on the notion of neural networks. 
Let us consider the feed-forward neural network of depth $M$ 
 defined by induction ($m=0, \ldots, M$) by the relations 
\bel{equa-all-z} 
z_m = y_{m} g_{m-1}(z_{m-1})\in \RR^{N_{m}}, 
\qquad z_0 = y_0 \in \RR^{N_0}, 
\ee
in which $N_0, N_1, \ldots, N_M$ are integers and 
$g_{m}:  \RR^{N_{m-1}} \to  \RR^{N_{m}}$ are prescribed {\sl activation functions}, while 
$y_0,\ldots,y_M$ are prescribed {\sl weights} with $y_m \in \RR^{N_{m} \times N_{m-1}}$. 
{
By direct substitution, we arrive at a function of the variable $y=(y_0, \ldots,y_{M-1}, y_M)$, namely 
$
z_M(y) = y_M z_{M-1}(y_0, \ldots,y_{M-1}) : \RR^{N_0 \times \ldots \times N_M} \to \RR^{N_M }$, 
in which the function $z_{M-1}$ is defined inductively from $z_{M-2}$, etc., as described in \eqref{equa-all-z}. 
With this notation, using the family of kernels $k_m(x,x') = g_{m-1}(x x'{}^T)$ 
%w 
 the neural network of interest is thus entirely represented by the kernel compositions ($m=0, \ldots, M$) 
{ 
\be
K(y_m,\ldots,y_0) = K_m \Big(y_m, K_{m-1} \big( \ldots, K_1(y_1,y_0) \big) \Big) \in \RR^{ N_m \times \ldots \times N_0}. 
\ee 
}
}
%  

%-----------------------------------------------------------------------------------------------------------------------------------------

\subsection{Numerical results}
 
\paragraph*{Dimension one.}
 
We present a rather basic test, while referring to \cite{PLF-JMM-SM-Tutorial} for further numerical examples.
We consider the role of kernels and we illustrate graphically the extrapolation operator \eqref{equation:EX-00} as well as the discrepancy error and functional norm computed from the kernels. 
We recall our notation: $x \in \RR^{N_x\times D}$ and $f(x) \in \RR^{N_x \times D_f}$ for
 the training set and the function values, respectively. 
 {
 On the other hand, $z \in \RR^{N_z \times D}$ and $f(z) \in \RR^{N_z \times D_f}$ represent
 the test set and the reference values respectively. 
Our test is based on the following data: a function $f$, a kernel $k$, and five integers $D,N_x,N_y,N_z,D_f$, together with 
the choice of function  
}
\bel{equa--2D}
f(x) = \prod_{d=1, \ldots, D} \cos (4\pi x_d) + \sum_{d=1, \ldots, D} x_d.
\ee
We use a random generator, configured to select points $x \in [-1,1]^{N_x \times D}$ randomly in the unit cube, and 
we pick up 
$z \in [-1.5,1.5]^{N_x \times D}$ as uniformly distributed over another cube, to observe extrapolation effects. The values of $x,z$ as well as corresponding  function values are shown in Figure~\ref{figure---31}.

%-------------------------------------------------------------------------------------------------------
\begin{figure}
%[htbp]
\centering
% \ep 
\includegraphics[width=.6\linewidth, height=0.30\linewidth]{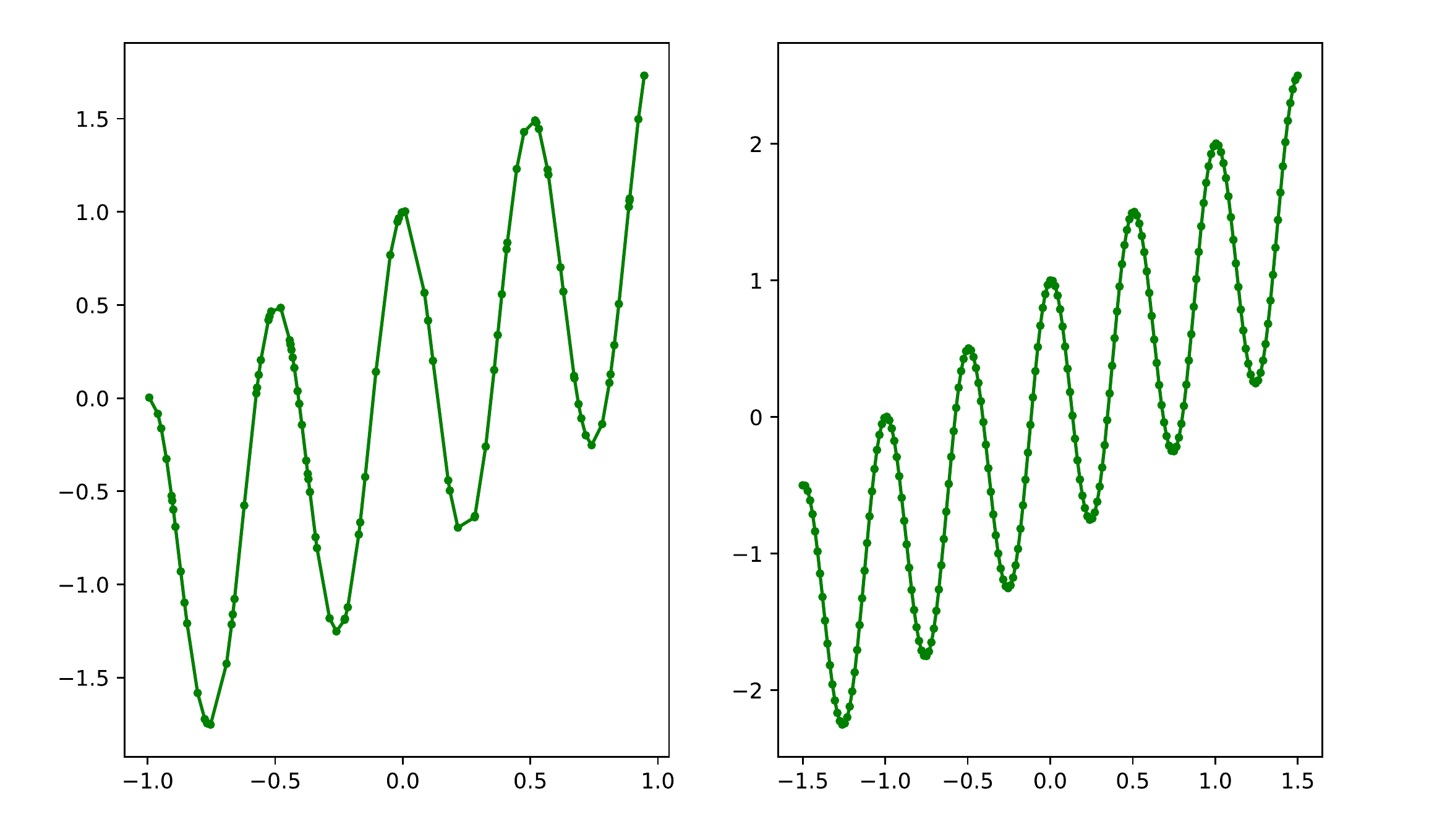} 
\centerline{Axis: (a) horizontal $x$,  vertical $f$; \qquad (b)  horizontal  $x$, vertical $f$}
\caption{  
(a): training set and function $(x,f(x))$. 
(b): test set and function $(x,f(x))$. 
} 
\label{figure---31}
\end{figure}
%-------------------------------------------------------------------------------------------------------

%-------------------------------------------------------------------------------------------------------
\begin{figure}
%[htbp]
\centering
\includegraphics[width=.9\linewidth, height=0.40\linewidth]{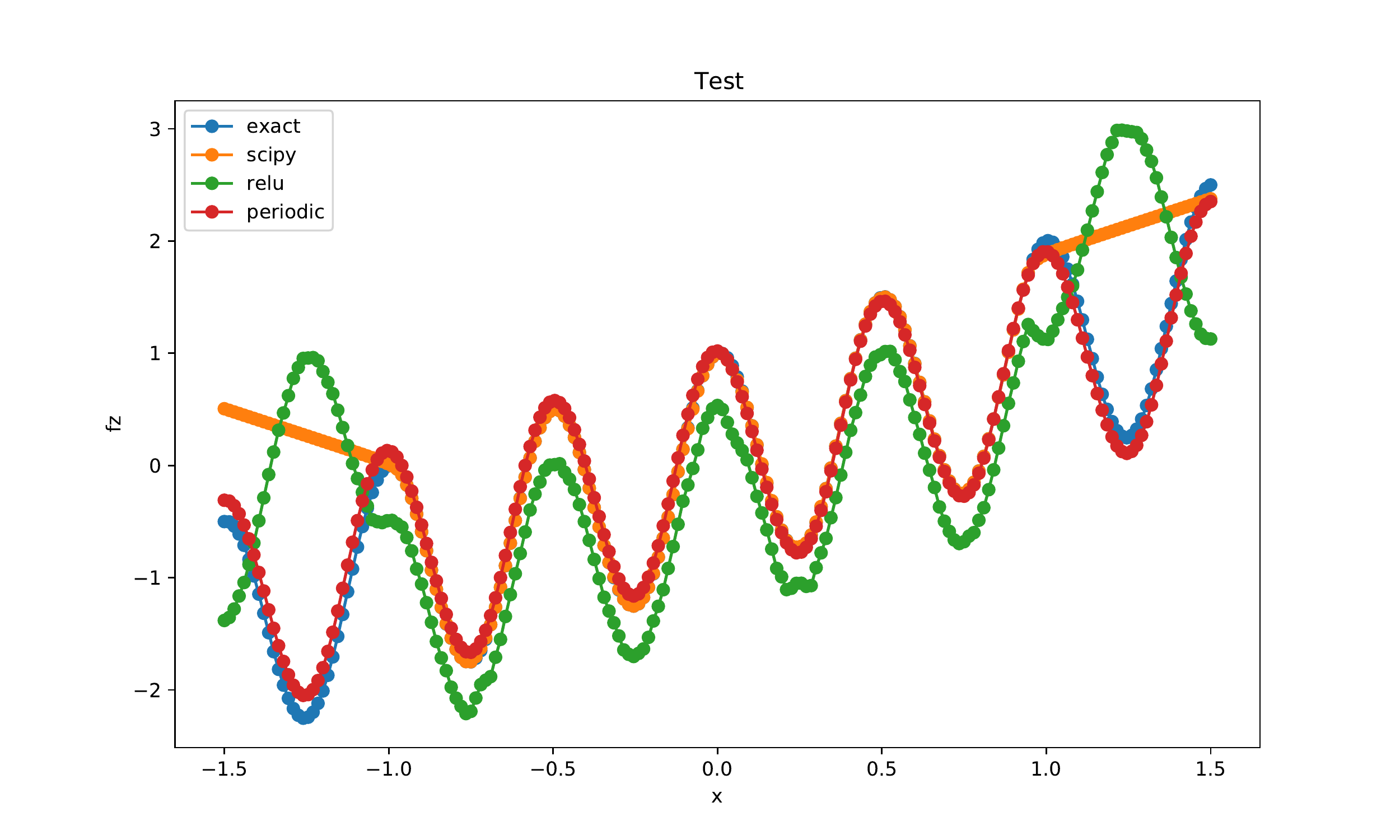}
\centerline{Axis: horizontal  $x$, vertical $f$}
\caption{  Extrapolation operator \eqref{equation:PO} using three different kernels.
}
\label{figure---31-show}
\end{figure}

%-------------------------------------------------------------------------------------------------------

We then test the extrapolation operator \eqref{equation:PO} using three different kernels. The first one is a Gaussian kernel, named SciPy, as we compared its results with the SciPy implementation to check our results. The second one is a kernel equivalent to a RELU network. Finally the third one is a Gaussian and periodic kernel. This kernel obviously gives more accurate results for this test, as shown in Figure~\ref{figure---31-show}. 
Observe that for Scipy and RELU, we extrapolate the data $(x,f(x))$ from the interval $[-1,1]$ to the larger interval $[-1.5,1.5]$.
In addition, for RELU  we also provide an illustration that a neural network method need not enjoy the reproducing property: if a point
$(x,(f(x))$ belongs to the training set and the extrapolation $z \to P(z)$ by a neural network may well have $f(x) \neq P(x)$.
Finally, in the following table we show the discrepancy error, as well as the kernel-norm of the computed functions: 
\begin{longtable}[]{@{}l|lll|@{}}
\toprule
& Scipy & RELU & periodic\tabularnewline
\hline
disc. error & 0.336247 & 0.2725986 & 0.1282466\tabularnewline
function norm & 0.7779849 & 0.7779849 & 0.7779849\tabularnewline
\bottomrule 
\end{longtable}

%--------------------------------------------------------------------------------------------------------
  
%  
  
\paragraph*{Dimension two.}
 
The previous test can be performed in any dimension and we show here the two-dimensional case ($D=2$) 
{ with
 the same choice of function \eqref{equa--2D}.}
 The plot of  $x,z,f(x),f(z)$ is given in Figure~\ref{fig-4930}. 
Next, in Figure~\ref{fig-43DF0} we illustrate the effect of extrapolating data outside the training set by using a Gaussian kernel. 
On the other hand, in Figure~\ref{fig-DFEF0} we show that the extrapolation is better dealt with a periodic kernel. 
Finally, we provide an error analysis as follow:
% \clearp 
\begin{longtable}[]{@{}l|ll|l|@{}}
\toprule
& Gaussian & periodic\tabularnewline
\hline
disc. error & 0.2105513 & 0.05348524\tabularnewline
function norm & 8181.435 & 2.000\tabularnewline
\bottomrule
\end{longtable}
%\caption{Table}
%\label{table-follow}
%\end{figure}

\newpage 

%-------------------------------------------------------------------------------------------------------
\begin{figure}
%[htbp]
\centering
\includegraphics[width=.9\linewidth, height=0.30\linewidth]{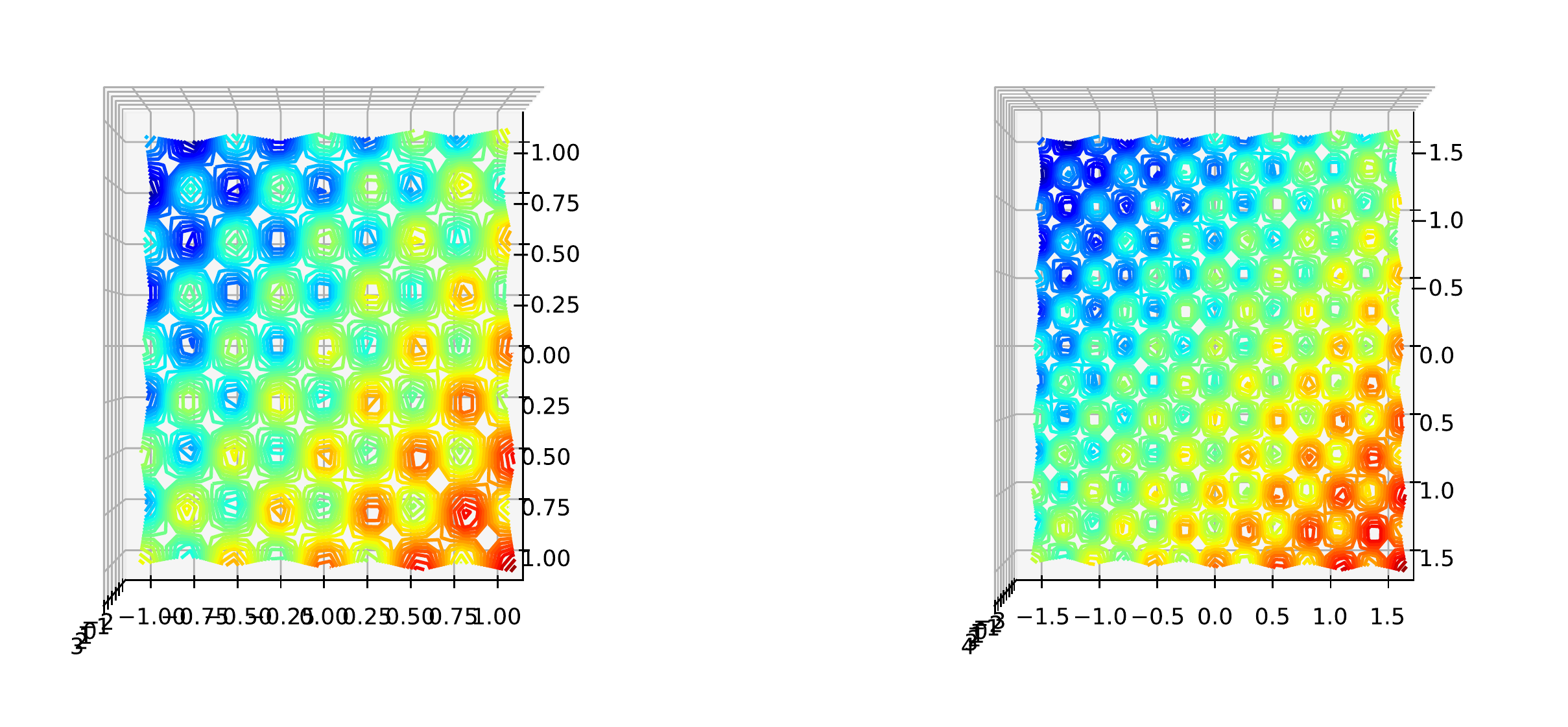} 
\centerline{Axis: horizontal  $x_1$, perspective $x_2$, vertical $f$}
\caption{  Plot of $x,z,f(x),f(z)$ in dimension $D=2$.
%\\
%\tt 
}
\label{fig-4930}
\end{figure}
%-------------------------------------------------------------------------------------------------------

%-------------------------------------------------------------------------------------------------------
\begin{figure}
%[htbp]
\centering
\includegraphics[width=.9\linewidth, height=0.30\linewidth]{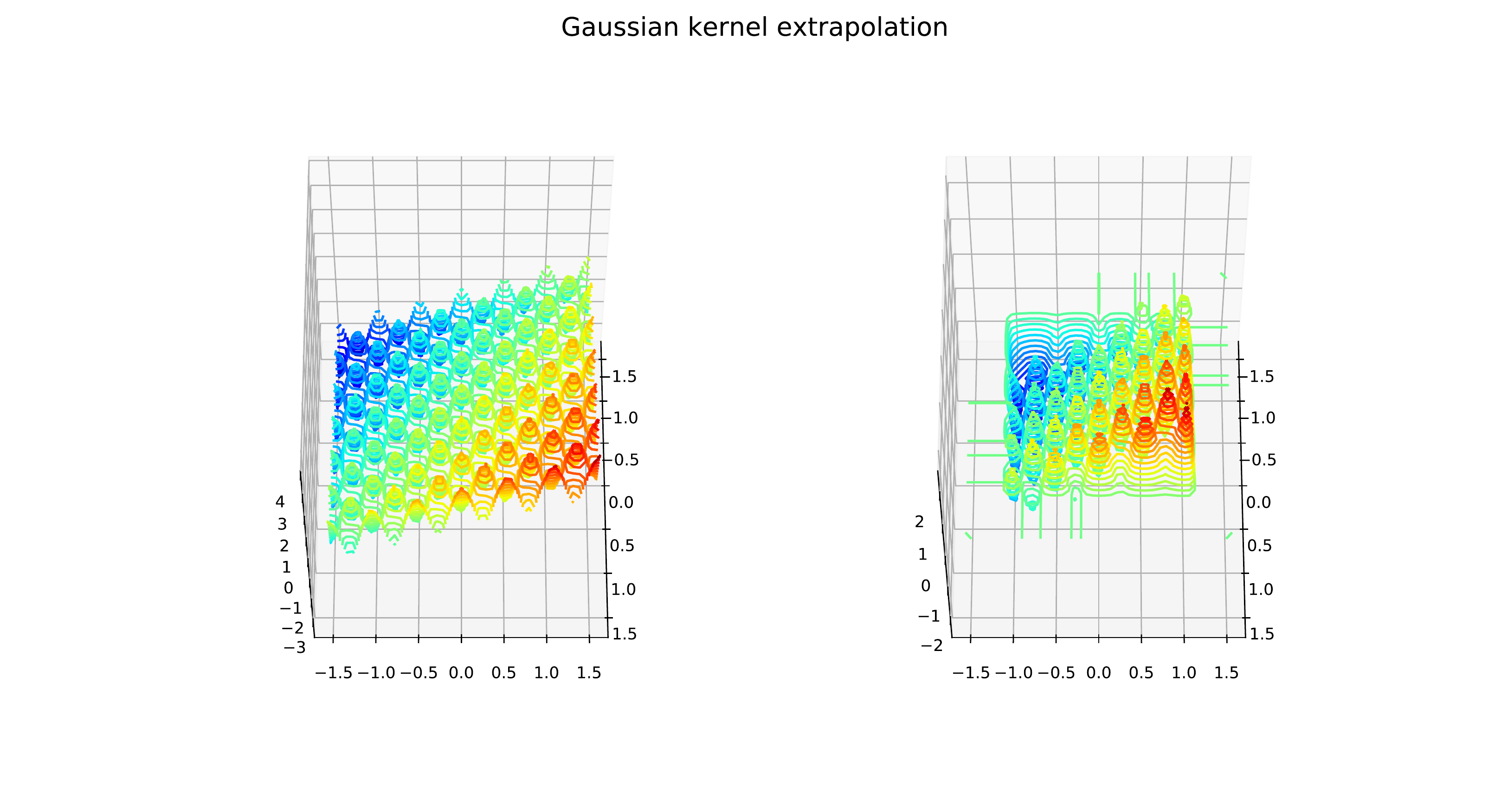} 
\centerline{Axis: horizontal  $x_1$, perspective $x_2$, vertical $f$}
\caption{  Extrapolating data outside the training set by using a Gaussian kernel.}
\label{fig-43DF0}
\end{figure}
%-------------------------------------------------------------------------------------------------------

%-------------------------------------------------------------------------------------------------------
\begin{figure}
%[htbp]
\centering
\includegraphics[width=.9\linewidth, height=0.30\linewidth]{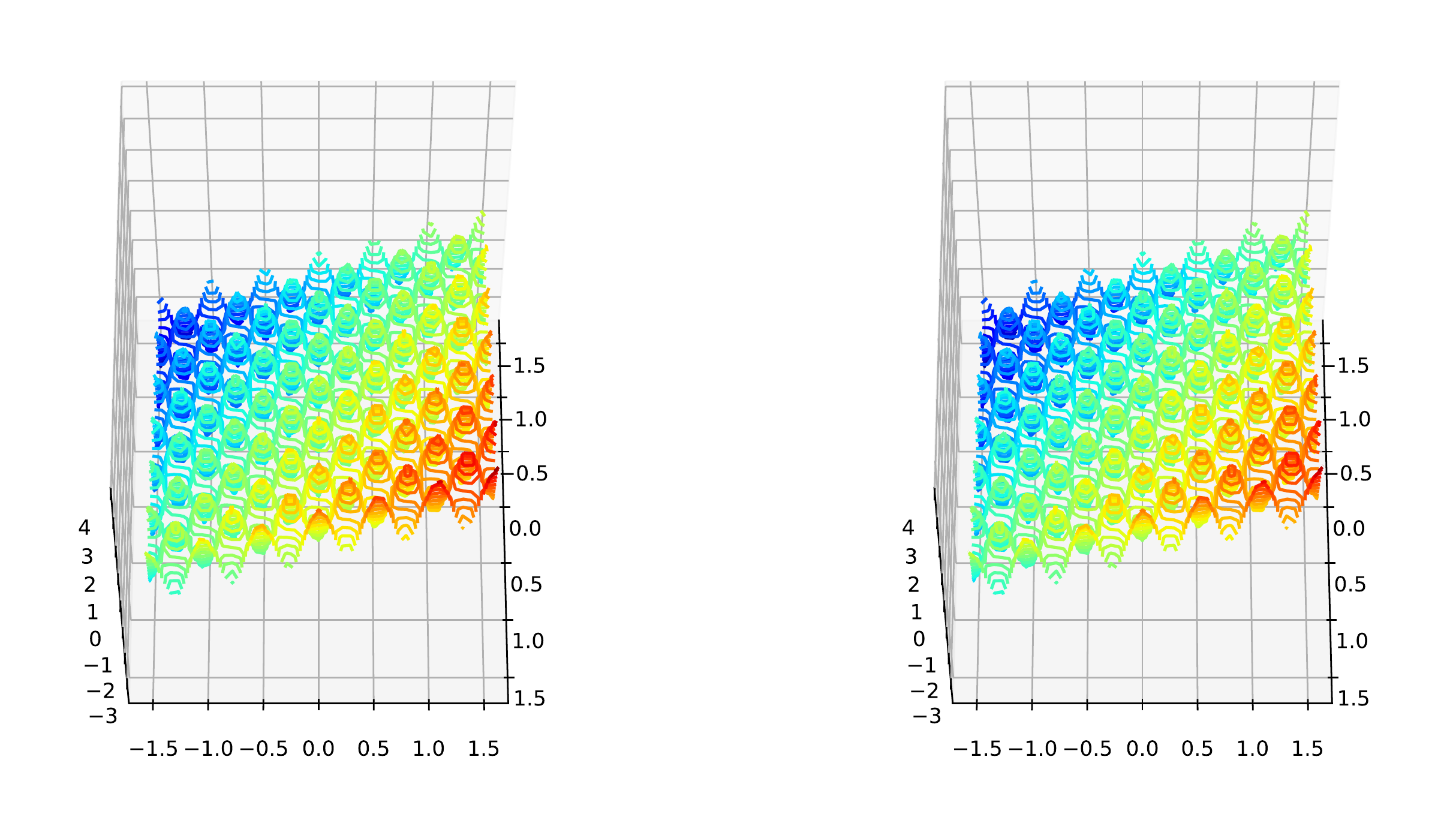} 
\centerline{Axis: horizontal  $x_1$, perspective $x_2$, vertical $f$}
\caption{  Extrapolating data outside the training set by using a periodic kernel.}
\label{fig-DFEF0} 
\end{figure}
%-------------------------------------------------------------------------------------------------------

\clearpage

%------------------------------------------------------------------------------------------------------------------------------------------------------

\subsection{Pattern recognition problem with MNIST}

\paragraph*{Algorithm applied to the MNIST database.} 

We now use our algorithm for solving a very standard problem in machine learning. The MNIST database 
 \cite{LCB} which provides one with a real-word test for comparing numerical algorithms of pattern recognition. 
 We build upon the discussion in Section~\ref{sec-31} and consider a typical setup. 

\bei 

\item We are given a {\sl training set of $M =60000$ images,} each representing $T=10$ handwritten digits.
To each image we attach a label which is an integer between $0$ and $9$. Each image is a matrix of $28 \times 28$ scalars representing gray scale, which we display as a $D$-vector in dimension $D=784$. 

\item We also consider a {\sl prediction set} consisting of $P=10000$ images, which we will use to compute the error of our prediction. 

\item Moreover, we introduce a {\sl regression set} consisting of {\sl $N$ randomly chosen images} (with  $N \leq M$) among the $M$ images of our training set. 
\eei 
 
\noindent We use the notation $N=N_Y$, $P=N_Z$, and $M=N_X$. 
%a 
and we summarize our notation in the following table: 
\begin{center}
\begin{tabular}{|c|c|c|c|c|c|c|c|}
\hline 
$X \in \RR^{M \times D}$ & $Y \in \RR^{N \times D}$ & $Z \in \RR^{P \times D}$ & $L_X \in \RR^{M}$ & $L_Y \in \RR^{N}$ & $L_Z \in \RR^{P}$ \\ 
\hline 
training images & regression  & prediction  & training labels & regression & prediction 
\\ 
\hline 
\end{tabular} 
\end{center}

%------------------- 

Throughout, an admissible kernel denoted by $K$ is fixed. To any discrete vector-valued function $f_X \in \RR^{M\times T}$, as explained earlier we associate the {\sl extrapolation function} 
$
f_X(Z) = K(Z,X) K(X,X)^{-1} f_X \in \RR^T, 
\qquad Z \in \RD. 
$
Once the training of a given model $L$ is performed, we predict its value, denoted $L(z)$ for an image $z$. The accuracy of the numerical output is evaluated by computing 
\bel{equa-erro}
E(N,M,L) = {1 \over P} \#_{z \in Z} \big\{z \, / \, L_Z(z) = L(z) \big\}. 
\ee

\paragraph*{An illustrative test.} 

Figure~\ref{keras} show the results based on a neural network of $N$ dense layers, trained with $M$ images, 
with Adam optimizer and sparse categorical cross-entropy loss function\footnote{as define at https://www.tensorflow.org/tutorials/quickstart/beginner }. We will use this test as a comparison for our results, since 
we can implement it on the same computer and is CPU-parallel, as are all of our tests.
Our numerical tests are based on a quite standard kernel, that is,
 the transported Gaussian kernel 
 \be 
 K(x,y) = \exp\Big(-C\, |\text{erf}^{-1}(2x-1)-\text{erf}^{-1}(2y-1)|^2 \Big),
 \ee 
 where 
 $\text{erf}^{-1}$ denote the inverse of the standard error function and $C$ is a scale factor. 
 Using transported kernels is natural in the present setting, since these kernels are compactly supported as are our data,
 after a rescaling to the set $[0,1]^{784}$.
In fact, we present our results with the kernel giving the best results among all other kernels we have tested 
{
(Mat\'ern, tensorial Mat\'ern, multiquadrics, RELU, linear regression). 
}
However, all numerical results are quite comparable, except for linear regression kernels which exhibit much lower accuracy.

%-------------------------------------------------------------------------------------------------------
 \begin{figure}
%[htbp] 
%\begin{table}
\centering
\begin{tabular}{|c|c|c|c|c||}
\toprule
M/N - score& 200 & 400 & 600\\
\midrule
10000 & 0.9494 & 0.9503 & 0.9525\\
20000 & 0.9666 & 0.9689 & 0.9667\\
30000 & 0.9724 & 0.9755 & 0.9778\\
40000 & 0.9754 & 0.9767 & 0.9793\\
50000 & 0.9786 & 0.9807 & 0.9794\\
60000 & 0.9786 & 0.9815 & 0.9783\\
\bottomrule
\end{tabular}
\centering
\qquad
\begin{tabular}{|c|c|c|c|c|}
\toprule
M/N - time& 200 & 400 & 600\\
\midrule
10000 & 2.26 & 3.13 & 3.63\\
20000 & 3.08 & 4.90 & 5.74\\
30000 & 4.95 & 8.54 & 9.22\\
40000 & 5.66 & 9.46 & 10.89\\
50000 & 8.03 & 13.40 & 14.54\\
60000 & 8.24 & 14.15 & 16.84\\
\bottomrule
\end{tabular}
\caption{  Keras -- scores and times}
\label{keras}
%\end{table} 
\end{figure}
%-------------------------------------------------------------------------------------------------------

%-------------------------------------------------------------------

\paragraph*{Extrapolation.}

{
A first idea is to extrapolate a categorical function $f_X \in \{0,\ldots,T\}$ on the training set $Z$ according to 
}
\eqref{equation:EX-00}. However, we observed a rather poor accuracy with this strategy, 
which is not consistent with re-labelling categorical invariance. Instead, let us consider the following partition of unity, 
where $T=10$ is the number of categories referred to in the literature as \textsl{one-hot encoding}: 
\bel{f}
f(x) = \big(f_1(x),\ldots,f_T(x) \big) \in \RR^T, \quad  f_t(x)
= \begin{cases}
1 & \text{ when } L(x) = t, \quad x \in X, 
\\
0 & \text{otherwise.}
\end{cases}
% =  
\ee
Here we denote by
$
z \mapsto L(z) \in \{0, \ldots,9 \}
$
 the function returning the classification for any image $z \in [0,1]^D$, and 
 the property $f(z) \in \{0,1\}^T$ holds for the function corresponding to this partition. 
 
 Consider the extrapolation of this function on the set $Z$ using \eqref{equation:EX-00}, that is, in this context
\bel{equation:EX}
f_X(Z) = \Extra_{X,Z}^K f_X(X) = K(Z,X) K(X,X)^{-1} f_X(X) \in \RR^{P \times T}.
\ee
The prediction being then computed as $L_X(z) = \arg \max_t(f_{X,1}(z),\ldots,f_{X,t}(z),\ldots,f_{X,T}(z))$. For this test, only the parameter $M$ is needed. The results are summarized in Figure~\ref{extrapolation}. 
Observe that this extrapolation algorithm is very simple, and has cubic complexity of order ${\mathcal O}(M^3)+{\mathcal O}(MP^2)$
{ where $M$ is the dimension of $X$.}
The table also displays the discrepancy error \eqref{EEPO}, and 
clearly in this case the discrepancy error is similar to the score error. 

The interpretation is the following : as observed earlier, we have the rough estimate of the error in \eqref{equa-erro}
(where $C_1, C_2$ are constants): 
\bel{EMNL} 
\aligned
E(M,M,L) &= \| 1_{L  = L_X } \|_{\ell^1(Z)} 
\leq C_1 \, \| f  - f_{X}  \|_{\ell^1(Z,\ell^\infty(T))} \leq C_2 \,  \| f  - f_{X}  \|_{\ell^1(Z)}   
  \leq  C_2 \, \dK(\mu_X, \mu_Z)\|f \|_{\Hcal^K_X},
\endaligned
\ee
where we use the notation $\ell^p$ to denote the standard (integral or pointwise) discrete norm. 
We can consider here of several optimization choices based on this expression. 
For instance, we can try to reduce $\|f \|_{\Hcal^K_X}$ by picking up a more adapted kernel $K$, which 
an idea closely related to {\sl variance reduction for Monte-Carlo sampling.}
This kernel engineering approach includes {\sl data filtering,}
 and does lead to accuracy improvements, however at a cost of tuning our learning machine to input data which may not be always recommended.  We will not explore this direction here, since we want to perform comparisons with standard methods. A second possibility is to choose the sampling and training set by reducing the distance $\dK(\mu_X, \mu_Z)$. In our example, we prefer to consider that these are fixed data.

%------------------------------------------------------------------------------------------------------------------------------------------------
\begin{figure}
%\begin{table}
\centering
\resizebox{\textwidth}{!}
{
\begin{tabular}{|c|c|c|c|c|c|c|c|c|c|c|}
\toprule
M & 1000 & 2000 & 3000 & 4000 & 5000 & 6000 & 7000 & 8000 & 9000 & 10000\\
\midrule
scores & 0.93&0.9492&0.9551&0.9619&0.9649&0.9666&0.9692&0.9707&0.9708&0.9726\\
\midrule
times & 1.26&2.83&5.12&8.53&11.10&16.28&22.41&30.37&40.06&48.68\\
\midrule
$1-\dK(X,Z)$ & 0.9194&0.9384&0.9534&0.9537&0.9610&0.9561&0.9580&0.9586&0.9598&0.9654 \\
\bottomrule
\end{tabular}
}
\caption{  Extrapolation -- scores and times}
% {tab:unnamed-chunk-2}
\label{extrapolation}
\end{figure}
%\end{table}
%-----------------------------------------------------------------------------------------------------------------------------------------------

\paragraph*{Projection.}

We now study the effect of picking up a smaller set $Y \subset X$ in order to {\sl reduce the complexity}
 of the vector machine algorithm. Our projection algorithm is based on interpolating the test-function over a smaller set $Y \in \RR^{N \times D} \subset X \in \RR^{M \times D}$ which is randomly selected among $X$, then
  using $Y$ for extrapolation on the set $Z$ using \eqref{IN}. To summarize, in this context the predictor is 
\bel{equation:EX}
f_X(Z) = \Pibf_{X,Y,Z}^K f_X(X) = K(Z,Y) K(Y,X)^{-1} f_X(X) \in \RR^{P \times T}, 
\ee
where the kernel $K(Z,Y) \in \RR^{P \times N}$. Figure~\ref{interpolation} contains the scores and times for values of $N,M$ chosen in order to keep computational times comparable to our benchmark test. 
{ 
The associated algorithm is quite simple to implement and enjoys the following two features:
(1) For a given computational time, we observe that this projection method achieves better performance in comparison to a crude extrapolation algorithm, however at the cost of tuning the parameters $N$ and $M$.
(2)  This algorithm reduces the algorithmic complexity of the output machine $L(z)$.
}

%We provide an 

%------------------------------------------------------------------------------------------------------------------------------------------------ 
\begin{figure}
\centering
\begin{tabular}{|c|c|c|c|c||}
\toprule
M/N - score& 1000 & 2000 & 3000\\
\midrule
5000&0.9619&0.9634&0.9632\\
10000&0.9654&0.9701&0.9695\\
15000&0.9686&0.9711&0.9739\\
20000&0.97&0.9735&0.9753\\
25000&0.9713&0.9742&0.9766\\
30000&0.9725&0.9753&0.978\\
\bottomrule
\end{tabular}
\centering\qquad
\begin{tabular}{|c|c|c|c|c|}
\toprule
M/N - time& 1000 & 2000 & 3000\\
\midrule
5000&2.65&5.38&8.62\\
10000&4&7.97&12.74\\
15000&5.34&10.89&17.49\\
20000&6.83&13.79&22.12\\
25000&8.28&16.77&27.06\\
30000&9.70&19.71&31.83\\
\bottomrule
\end{tabular}
\caption{  Projection -- scores and times}
\label{interpolation}
\end{figure}

%----------------------------------------------------------------------------------------------------------------------------------------

\paragraph*{Matching algorithm.}

{
The following numerical strategy is now introduced: we apply the projection operator \eqref{equation:EX} as a predictor, but
%  we take
% use $Y$ as 
$Y = \arg \inf_{Y \in \RR^{N\times D}} \dK(\mu_{Y,f_Y}, \mu_{X,f_X})^2$,
where the discrepancy error $\dK(\mu_{Y,f_Y}, \mu_{X,f_X})^2$ corresponds to the discrepancy error of the tensor kernel constructed as the product of the linear regression kernel $K_f$ and the Gaussian kernel.
From the algorithmic point of view, this approach corresponds to matching each category by using \eqref{SDA}, that is, 
$
 Y^p = \arg \inf_{Y^p \in \RR^{N\times D}} \dK(\mu_{Y^p,f_Y^p}, \mu_{X^p,f_X^p})^2$ for 
 $p = 1, \ldots, 10$, 
which significantly reduces the overall algorithmic complexity. Figure~\ref{fitting} displays our results with input data comparable to our benchmark test. In Figure~\ref{projfitting}, we show a comparison with the projection algorithm for comparable inputs.
}

%-------------------------------------------------------------------------------------------------------------------------------------- 
\begin{figure}
\centering
\begin{tabular}{|c|c|c|c|c||}
\toprule
M/N - score& 200 & 400 & 600\\
\midrule
10000&0.9396&0.9478&0.9509\\
20000&0.9433&0.9529&0.9556\\
30000&0.9416&0.9527&0.9581\\
\bottomrule
\end{tabular}
\centering\qquad
\begin{tabular}{|c|c|c|c|c|}
\toprule
M/N - time& 1000 & 2000 & 3000\\
\midrule
10000&36&47&58\\
20000&117&143&160\\
30000&251&286&312\\
\bottomrule
\end{tabular}
\caption{  Matching algorithm -- scores and times}
\label{fitting}
\end{figure}
%--------------------------------------------------------------------------------------------------------------------------------------- 
\begin{figure}
\centering
\begin{tabular}{|c|c|c|c|c||}
\toprule
M/N - score& 200 & 400 & 600\\
\midrule
10000&0.911&0.928&0.9357\\
20000&0.9153&0.9356&0.9437\\
30000&0.9182&0.9379&0.9457\\
\bottomrule
\end{tabular}
\centering
\qquad
\begin{tabular}{|c|c|c|c|c|}
\toprule
M/N - time& 1000 & 2000 & 3000\\
\midrule
10000&0.86&1.29&1.75\\
20000&1.38&1.98&2.62\\
30000&1.79&2.60&3.47\\
\bottomrule
\end{tabular}
\caption{  Projection algorithm -- scores and times}
\label{projfitting}
\end{figure}
%----------------------------------------------------------------------------------------------------------------------------

%====================================================================================

\section{An algorithm for the Fokker-Plank-Kolmogorov system in mathematical finance}
\label{sec-44}

\subsection{The fundamental equations}

\paragraph*{Stochastic differential equation.}

Let us begin with a description of the fundamental equations of mathematical finance, that is, 
the Fokker-Planck and Kolmogorov equations. We are interested in Markov-type stochastic processes, denoted by
$t \mapsto X_t \in \RD$ and governed by a stochastic differential equation (SDE).
Specifically, we want to solve the {\sl forward} initial value problem
\bel{equa-SED}
\aligned
& dX_t = r(t,X_t)dt+\sigma(t,X_t)dB_t, 
\qquad
\qquad
 X_t|_{t=0} = X_0. 
\endaligned
\ee
Here, the initial data $X_0 \in \RR^D$ is a prescribed random variable and $B_t \in \RR^D$ denotes a ($D$-dimensional, with independent components) Brownian motion.
We are also given a vector field $r \in \RR^D$ representing the drift (or risk-free rate), 
together with a matrix-valued field $\sigma = \sigma(t,x)$ 
representing the random diffusion (or volatility). 
In our presentation we consider $\sigma$ and $r$ as given, while in specific applications it might be required to perform  
a calibration first and solve an inverse problem based on market data. A description of such a calibration algorithm can be found in \cite{PLF-JMM-2}.
 
%---------------------------------------------------- 

\paragraph*{Fokker-Planck equation.}

We are going to work with the description based on the partial differential equation associated with the SDE \eqref{equa-SED}. We introduce the linear operator  
\bel{equa--Lcal} 
\Lcal \mu= \nabla \cdot (r \mu) + \nabla^2 \cdot (A \mu), \qquad A= {1 \over 2} \sigma \sigma^T. 
\ee
and consider the corresponding convection-diffusion equation. 
{ 
Here, as is standard for PDEs we denote by 
$\nabla$ the gradient operator and by $\nabla \cdot$ the divergence operator, while
 $\nabla^2= (\del_i \del_i)_{1 \leq i,j \leq D}$ denotes the Hessian operator. 
 }
Given any parameter $(s,y) \in [0, +\infty) \times \RD$, we denote by $(t,x) \mapsto \mu(t,x; s,y)$ (defined for $t\geq s$ and $x \in \RD$) the {\sl probability density measure} of the random variable $X_t$, {\sl knowing} the value $X_s = y$ at the time $s$. By definition, this measure is the fundamental solution, defined 
from the base point $(s,y)$, of the {\sl Fokker-Planck problem} 
\bel{equa--FP}
\del_t \mu - \Lcal \mu = 0, \quad t \in [s, +\infty),
\qquad \mu|_{t=s} = \delta_{x=y}. 
\ee 
The initial data in \eqref{equa--FP} is thus posed at the time $t=s$ and coincides with the Dirac measure $\delta_{x=y}$ at the point $x=y$. 

By construction of the Fokker-Planck equation, the expectation of the random variable 
with respect to an arbitrary test-function $\varphi$ is given as the corresponding moment of the measure $\mu$. 
That is, the relation 
\be
\int_{\RD} \varphi(t,x) \, d\mu(t, x; 0, X_0) = \Ebf^{X_t} (\varphi(t, \cdot) | X_0)
\ee
holds between the measure $\mu$ and the initial data $X_0$.  
We also observe that the solution to \eqref{equa--FP} is more naturally understood in the sense of distributions when the initial data is a measure. The solution $\mu$ is then a probability measure since the total mass is conserved in time, i.e. 
$
\int_{\RD} \mu(t,x; s,y) dx = 1.  
$
Finally, we recall that the equation \eqref{equa--FP} also models the Brownian motion of particles in plasma physics.

It will be convenient to distinguish between two cases of interest in operational problems :
\bei

\item The {\bf martingale Fokker-Planck equation,} corresponding to the purely diffusive Fokker-Planck equation without transport part, that is, with $r \equiv 0$ in \eqref{equa-SED}.  

\item The {\bf general Fokker-Planck equation,} which includes both hyperbolic and parabolic parts, in which $\sigma$ and $r$ are prescribed functions.

\eei
%

%-------------------------------------------------------

\paragraph*{Kolmogorov equation.}

The backward dual problem defined now provides us with the natural standpoint in mathematical finance.  
Taking the dual of the Fokker-Planck equation leads us to the {\sl Kolmogorov equation} which is  
the central equation of interest, governing for instance for the price of option values, and is referred to as the {\sl Black and Scholes equation}. 
It concerns an $M$-vector-valued unknown denoted by $P \mapsto P(t,x) = P (t,x; T, P _T)$,
defined for all times $t \leq T$ and $x \in \RD$, and is formulated as the following {\sl backward problem.} 
Given a final time $T$ and any (continuous and $\mu$-integrable) final data $P _T$, we seek the solution to  
\bel{equa--KE}
\aligned
& \del_t P  - \Lcal^* P  =0,  \quad t \in [0, T], 
\qquad
\qquad 
  P|_{t=T} = P _T, 
\endaligned
\ee
where $\Lcal^*$ denotes the dual of the Fokker-Planck operator, given by 
$
\Lcal^* P = - r \cdot \nabla P  + A \cdot \nabla^2 P. 
$

In mathematical finance, the vector-valued function $P  \in \RR^M$ represents a portfolio of $M$ instruments and $M$ is typically a large integer. We recall the following terminology. 

\bei 

\item The final data  $P _T$ is called the {\sl payoff} of the instruments, whose underlyings are described by the random variable $X_t$ satisfying the stochastic differential equation \eqref{equa-SED}. 

\item The Kolmogorov equation \eqref{equa--KE} determines the so-called {\sl forward values} (or price) $P(t,x)$ of the porfolio at any later time $t \in [0,T]$. 

\item In addition, the value at the time $t=0$ is called the {\sl fair value} $P_0 = P|_{t=0}$ of the portfolio. 

\eei 

\noindent Solving the Kolmogorov equation for a given portfolio of instruments allows us to determine not only its price,  
 but also the whole of the {\sl fair value surface} $(t,x) \mapsto P (t,x)$ (for all $t \in [0, T]$ and $x \in \RD$). 
This function is important in many applications since standard risk measures are often determined from this surface; for instance, this is the case of risk measures of internal or regulatory nature, as well as of optimal investment strategies. 

%-------------------------------------------

\paragraph*{Martingale case.}  

Let us consider here the martingale case and, for clarity in the presentation, let us repeat here the main equations. 
This case consists in solving the Fokker-Planck equation \eqref{equa--Lcal} and the Kolmogorov equation \eqref{equa--KE} 
without the advection part, that is, 
\bel{equa-mart}
 dX_t = \sigma(t,X_t)dB_t, \qquad
  t \ge s \ge 0,
\ee
with initial conditions $X_0 \in \RR^D$. The Fokker-Planck equation \eqref{equa--Lcal} for
 the probability measure $\mu(t,\cdot)$ reads (for $t\ge 0$)
\bel{equa--Lcalmart} 
\del_t \mu= \nabla^2 \cdot (A \mu), 
\qquad A= {1 \over 2} \sigma \sigma^T,
\ee
with initial conditions $\mu(0,\cdot) = \delta_{X(0)}$.   
In this martingale context, we then solve  the Kolmogorov equation \eqref{equa--KE}, that is,
 for some continuous and $\mu$-integrable function $P$
\bel{equa--KEmart} 
 \del_t P  -  A \cdot \nabla^2 P =0,  \quad t \in [0, T], 
\qquad
\qquad 
  P|_{t=T} = P _T. 
\ee

We will use the following definition associated with any martingale process $t \mapsto X_t \in \RR^D$ (for $t \ge 0$). 
In the range $0 \le s \le t$ and $x,y \in \RR^D$, we define the {\sl transition probability operator}
 $\Pi = \Pi(s,t,x,y)$ as the solution to the Kolmogorov equation in the variables $(t,y)$ 
\bel{equa--Kolm} 
\del_t \Pi = A \cdot \nabla_y^2 \Pi, \qquad \Pi(s,t,x,\cdot) = \delta_{x}, 
 \qquad 
 t \ge s \ge 0, \quad x \in \RR^D.
\ee
It is not difficult to check the following property. If $t \mapsto X_t \in \RR^D$ (for $t \ge 0$)
is a  martingale process and $\mu(t,\cdot)$ denotes its density measure, using transition operator $\Pi$ we have 
\bel{Pixy}
P(s,x) = \Pi(s,t,x,\cdot) \ast P(t, \cdot) = \EE^{X_t}\Big( P(t,\cdot) | X_s = x\Big), 
\ee
where $\ast$ denotes the convolution of two functions. The first identity holds since, by definition, the operator $\Pi(t,s, x ,y)$ is the fundamental generator of the Kolmogorov equation, while the second identity is a consequence of Feynman-Kac's theorem.

%-----------------------------------------------------------------------------------------------------------------------------------------

\subsection{The transport mesh-free method for mathematical finance}
\label{TMMF}

We now present our algorithm and solve numerically the equations \eqref{equa--FP} and \eqref{equa--KE}, as follows. The method was first outlined in \cite{PLF-JMM-2} and depends upon the a priori choice of an admissible kernel $K$ of the type described in Section~2. We are going to compute a transport map, denoted below by $t\mapsto S(t,\cdot)$, which is naturally associated with the probability density $\mu$ of the stochastic process $X_t$ satisfying \eqref{equa-SED} and associated with a random initial data $X_0$.  

%-----------------

\paragraph*{Step 1: Forward computation.} 

Our first task is to determine the (almost) sharp discrepancy sequence that is most relevant for the simulation of the SDE problem \eqref{equa-SED}. We introduce a semi-discrete scheme for the the Fokker-Planck equation \eqref{equa--FP}, 
in which the time variable is kept continuous.
This scheme provides us with an approximation of the solution $\mu$ as a sum of Dirac measures, namely   
\be
\mu_{S}(t) = \frac{1}{N} \sum_{n=1}^N  \delta_{S^n(t)},
\qquad 
S(t) = (S^1, \ldots, S^N)(t), 
\ee
in which $t \mapsto \big( S^n(t) \big)_{n=1, \ldots, N}$ in $\RD$ is a suitable family of  $N$ ``moving particles''.

Specifically, a (small) approximation parameter $\eps \in (0,1)$ being fixed, we solve the coupled system of ordinary differential equations  (for $t \geq t_0>0$) 
\bel{equa--SDS}
\aligned
\frac{d}{dt} S & = r_S + \big( \nabla_S \big)^T A_S \in \RR^{N \times D},  
\qquad
\qquad
 S\big|_{t= t_0}  = (X_0, X_0, \ldots, X_0) + \sqrt{t_0} \, A(0, X_0) \Ncal(0,1), 
\endaligned
\ee
in which the function $r$ and $A$ in \eqref{equa--FP} are evaluated pointwise along the trajectories of the moving particles:
\be 
r_S(t) = \big(  r(t,S^n(t)) \big), 
\qquad  
A_S(t) = \big( A(t,S^n(t)) \big)_{n=1, \ldots, N}. 
\ee
Recall that the discrete differentiation $\big( \nabla_S \big)^T$ and, therefore, $\big( \nabla_S \big)^T A_S$ were defined in Section~\ref{section-21}. 
In \eqref{equa--SDS}, $X_0$ stands for a specific realization of the random initial data,
 and in order to prevent the possibility of colliding particles, we have added a (vector-valued, centered, and of unit variance) i.i.d. variable\footnote{By definition, i.i.d. variables 
are independent and identically distributed random variables.}
  $\Ncal(0,1)$.  

%---------------------
%-------------------------------------------------------------------- 

\paragraph*{Step 1. Stability.}

The dynamics of particles is determined by the coupled system \eqref{equa--SDS}, and we have defined an approximation of the solution $\mu(t) \simeq \mu_S(t)$ to the forward Fokker-Planck problem \eqref{equa--FP}. In the next section, we will check that this scheme is consistent and stable and, in fact, in the long time limit
the sequence $S(t)$ approaches a sharp discrepancy sequence for the kernel $K$. 
The above property has an important consequence, that is, we can rely on the framework developed in Section~2. In order to evaluate the accuracy of our strategy, at each discrete time we 
can compute the discrepancy error \eqref{equation-EE} and obtain the explicit error estimate
\bel{equa--FPE} 
\Big| \int_{\RR^D} \varphi(x)d\mu(t) - \frac{1}{N}\sum_{1 \leq n \leq N} \varphi(S^n(t)) \Big| 
\leq \dK(\mu(t), \mu_S(t)) \, \|\varphi\|_{\Hcal^K(\RD)}, 
\ee
valid for any moment of the measure $\mu$ and any time $t$
($\varphi$ being an arbitrary test-function). 
Based on our analysis of the sharp discrepancy error, we can compare $\dK(\mu(t), \mu_S(t))$ with the minimum value denoted (earlier on by $\dK(\mu, \delta_{\Ybar})$ in Section~2) 
and, in turn, {\sl explicitly check} the accuracy of the numerical solution.

%-------------------------------------------------------------------- 

\paragraph*{Step 2. Backward computation.} 

At this stage, we have computed an approximation of the sharp discrepancy sequence which is relevant (for a given realization of the initial data), and we are in a position to solve the backward Kolmogorov problem \eqref{equa--KE}, using $t \mapsto S^n(t)$ (with $n= 1, \ldots, N$) as a {\sl moving grid}. 
We determine an function $t \mapsto P_S(t) \in \RR^{N \times M}$ 
which is expected to be an approximation of the pointwise values 
$\big( P(t, S^n(t)) \big)_{1 \leq n \leq N}$ of the  Kolmogorov solution. 
As we further explain below, it can be computed as follows, between any two arbitrary times $s < t$ (for instance $0=s<t=T$), 
\bel{equa-KED} 
P_S(s) =  \Pibf_S^{ (t,s)}P_S (t),  
\qquad 
\Pibf_S^{ (t,s)}= \big( \pi^{ (t,s)}_{n,m} \big)_{1 \leq n,m \leq N}. 
\ee
Here, the matrix $\Pibf_S^{ (t,s)} \in \RR^{N \times N}$ is provided {\sl explicitly} (see below) 
and is nothing but the discrete fundamental operator associated with the Kolmogorov equation. Hence, 
this step of our numerical algorithm reduces to {\sl computing the inverse of a matrix.} 
The consistency and convergence of the method are discussed below. 
%-------------------------------------------------------------------- 

\paragraph*{Step 2. Stability.} 

Interestingly, in the present context we can regard the matrix $\Pibf_S^{ (t,s)}$ as a {\sl Markov-chain process:}  $\pi^{ (t,s)}_{n,m}$ is the probability that the stochastic process will jump from the sharp discrepancy state $S^n(t)$ (at the time $t$)
 to the sharp discrepancy state $S^m(s)$ (at the time $s$). Our numerical algorithm provides this structure in a natural way, 
 and $\pi^{ (t,s)}_{n,m}$ turns out to be a bi-stochastic matrix (having each rows and each column summing to $1$). 
This step of the algorithm is a martingale process which takes into account the diffusive part $A$ of the SDE problem \eqref{equa-SED} but with the drift part $r$ {\sl suppressed} (since it was already handled in the first step). 
Importantly, the numerical solution enjoys the error estimate associated with the kernel $K$, that is, 
\be
 \Big| \int_{\RR^D} P (t,\cdot)d\mu(t,\cdot) - \frac{1}{N}\sum_{1 \leq n \leq N} P (t,S^n(t)) \Big| 
  \leq 
\dK(\mu(t), \mu_S(t)) \, \|P (t,\cdot)\|_{\Hcal^K(\Rd)}, 
\ee
which allows us to quantitatively determine, once more, the {\sl accuracy of the numerical solution.} 
{
We can also deal with partial derivative operators and, for instance, our framework allows us to compute forward sensitivities, that is, $\nabla P (s) \text{ is approximated by } \nabla_y P (t,y^n(t))_{1 \leq n \leq N}$. 
This allows us to determine, for instance, hedging strategies, as we discuss later in this section. 
Motivated by fluid and material dynamics, we could also treat more complex operators such as the Hessian operator or the Helmholtz-Hodge decomposition.
}

%--------------------------------------------------------------------------------------------------------------------------- 
 
\paragraph*{Numerical illustration: the SABR model.} 

We can illustrate a feature of our algorithm by considering the example of the (shifted) SABR model (see \cite{AKS2015} and the references therein), described by the following two stochastic differential equations with unknown variables $X=(F, \alpha)$: 
\begin{equation}
\label{eq-modelSABR}
\begin{pmatrix}
d F_t 
\\ 
d \alpha_t
\end{pmatrix} 
= \rho \, \begin{pmatrix}
\alpha_t (F_t+s)^\beta & 0 \\ 
0 & \nu \alpha_t
\end{pmatrix} 
\,
 \begin{pmatrix}
 dW_t^1 \\ 
 dW_t^2
\end{pmatrix},
\ee
supplemented with prescribed random initial conditions $X_0 = (F_0,\alpha_0)$ at the time $t=0$.  Here, $\beta \in [0,1]$ is a parameter representing the so-called {\sl constant elasticity of variance} (CEV) and the constant $\nu \geq 0$ is the volatility parameter, while $W^1_t, W^2_t$ are independent Brownian motions, while the so-called correlation matrix  $\rho$ is also prescribed.  
Let us consider the transported Mat\'ern kernel, described for instance in \cite{PLF-JMM-estimate}. 
% in Figure \ref{Kernels} (right-hand figure). 
In Figure~\ref{SABRden}, we plot our approximation of the sharp discrepancy sequence associated with the SABR model. We use $N=200$ points and the parameters $F_0=0.03$, $\alpha_0 = 0.1$, $\nu = 0.1$, $\beta = 1$, 
 and $\rho_{12} = \rho_{21} = 0.5$ (with $\rho_{11} = \rho_{22} = 1$ for the correlation matrix). 
We plot the set of points \(\big( y^1(t),\ldots, y^N(t)\big)\), where the $y$-axis represents the volatility process $\alpha_t$, and $x$-axis denotes the interest rates $F_t$. We observe that the initial support (an approximation of a Dirac measure) spread throughout the computational domain and expands as the time increases, according to the transport-diffusion process described by \eqref{eq-modelSABR}. 
 
%------------------------------------------ 

\begin{Figure}
\includegraphics[width=0.32\linewidth]{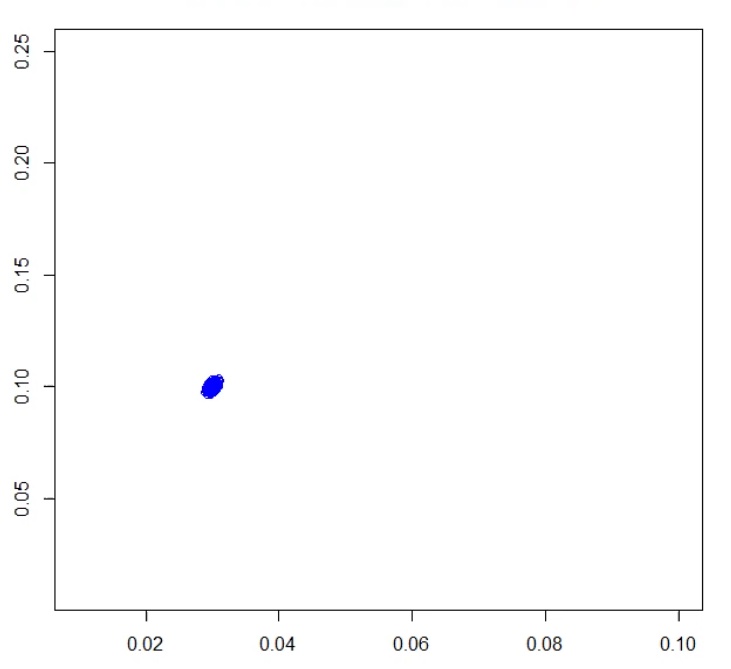} \includegraphics[width=0.32\linewidth]{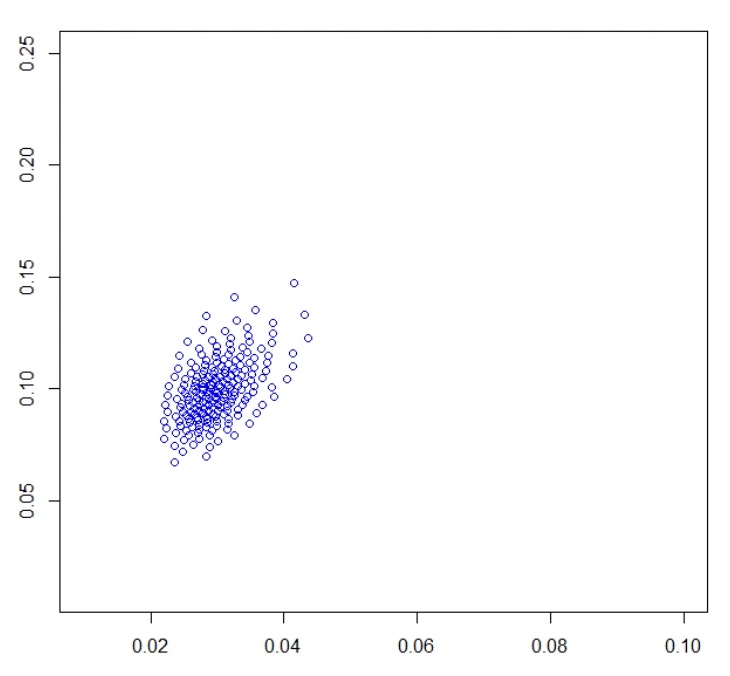} 
\includegraphics[width=0.32\linewidth]{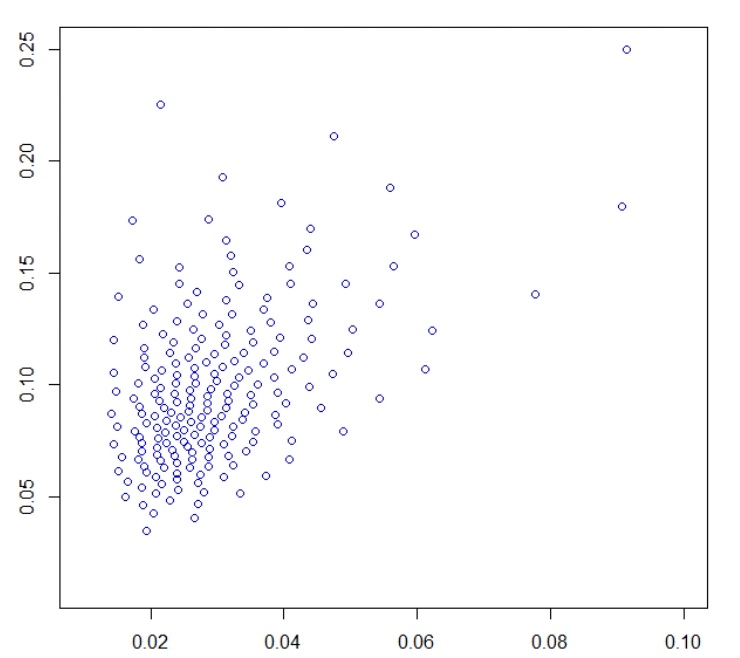}

\centerline{(a) horizontal $y^1$,  vertical $y^2$ \qquad \qquad (b)  horizontal  $y^1$, vertical $y^2$ \qquad (c)  horizontal  $y^1$, vertical $y^2$ \qquad}
\captionof{figure}{  SABR model at times $t=0.02, 2, 12$ with $N=200$ points.}
\label{SABRden}
\end{Figure}

%----------------------------------------------------------------------------------------------------------------------

%================================================================================

\subsection{Analysis of the semi-discrete scheme for the Fokker-Plank equation}

\paragraph*{Main statement for this section.}

The semi-discrete scheme \eqref{equa--SDS} enjoys many properties (global existence, stability, convergence) which we now state.  We work with the following {\sl modified Fokker-Planck problem} 
\bel{equa--FP-mod}
\aligned 
& \del_t \muexa - \Lcal \muexa = 0, \qquad 
&&t \in [t_0, +\infty), \, x \in \RD, 
\\
& \muexa|_{t=t_0} = \mu_{S_0^\star}, 
\qquad 
&& S_0^\exact = (X_0, X_0, \ldots, X_0) + \sqrt{t_0} \, A(0, X_0) \Ncal(0,1), 
\endaligned
\ee 
whose (exact) unknown is denoted by $\muexa= \muexa(t,x)$ and for which the initial data is posed at some 
time $t=t_0$ and randomly perturbed by a Brownian motion (as explained earlier). 
We impose a uniform ellipticity condition in \eqref{equa-assume} below.
 Thanks to the uniform parabolic property we can restrict attention to sufficiently regular solutions. 

An important insight for our theory is provided by the transported version of the evolution equation. By direct considerations, the first step of the algorithm is seen to enjoy the following properties. 
 
\begin{claim}[Semi-discrete algorithm for the Fokker-Planck equation]
 \label{SD2}
  
Consider the Fokker-Plank problem \eqref{equa--FP-mod} under the following ellipticity and linear growth conditions
(with $\lambda, \alpha_-, \alpha_+ >0$): 
\bel{equa-assume}
| r(t,x) \cdot x | \leq \lambda  |x|^2,  
\qquad 
\alpha_-  |x|^2 \leq x^T \cdot A(t,x) \cdot x \leq \alpha_+  |x|^2, 
\qquad t \geq 0, \, x \in \RD.
\ee
Then the semi-discrete scheme \eqref{equa--SDS} defines a sequence of moving particles
 $S(t) =(S^n(t))$ 
together with an associated measure $\mu_S(t) = \frac{1}{N} \sum_n \delta_{S^n(t)}$ 
which determines a convergent approximation of the solution $\muexa(t)$ to \eqref{equa--FP-mod}, as follows. 

\bei
\item {\bf Sup-norm estimate:} 
% Th 
$\sup_{ t \geq 0} e^{-\lambda t} \sum_{n=1}^N   |S^n(t)|^2 \leq 
\sum_{n=1}^N  |S^n(0)|^2$, 
which is an analogue of the second moment $\int |x|^2 d\muexa$ for solutions to the Fokker-Plank equation. 

\item {\bf Balance law of momentum:} 
% The 
$\frac{d}{dt}\sum_{n=1}^N S^n(t) = \sum_{n=1}^N  r(t,S^n(t))$ for all $t \ge 0$
and, in particular, the total momentum is constant in time when the drift vanishes identically. 

\item {\bf Self-adjointness property.}  The scheme \eqref{equa--SDS} can be cast in the symmetric form
%\bel{equa-self} 
$\frac{d}{dt} S = r_{S} + B_S S,
$
in which $B_S = B_S(t)$ is a positive-definite symmetric matrix. 

\item {\bf Error estimate.} For any relevant test-function $\varphi$ and any time $t \geq 0$ one has 
\be
\bigg|\int_{\RR^D} \varphi(x) d\muexa(t,x) - {1 \over N}  \sum_{n=1}^N  \varphi(S^n(t)) \bigg| 
\leq 
\dK(\muexa(t), \mu_S(t)) \,  \|\varphi \|_{\Hcal^K(\RD)}.
\ee
% wh 

\item{\bf Convergence property:} 
% The s 
$\lim_{N \to +\infty} \dK(\muexa(t), \mu_S(t)) = 0$ for all $t \geq t_0$.

\eei

\end{claim}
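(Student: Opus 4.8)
The plan is to prove the structural self-adjointness property first, since it underpins the two stability estimates, then to read off the balance law and the sup-norm bound by energy arguments, and finally to dispose of the error estimate as an immediate consequence of Section~\ref{section--2} while isolating the convergence statement as the genuinely hard point. To begin, I would record that the diffusion term $(\nabla_S)^T A_S$ in \eqref{equa--SDS} is built from the $K$-divergence, defined in \eqref{equa-diverg} as the Euclidean transpose of the $K$-gradient, sandwiching the field $A_S$. Because $A=\tfrac12\sigma\sigma^T$ is symmetric and, by \eqref{equa-assume}, uniformly positive, this term acquires the shape $M^T A_S M$ with $M$ assembled from $\nabla^K_S$ (whose explicit form is \eqref{Grad}), so that the scheme can be written as $\tfrac{d}{dt}S = r_S + B_S S$ with $B_S=B_S(t)$ symmetric and positive-definite. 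This establishes the self-adjointness property and simultaneously exhibits $B_S$ explicitly in terms of $K(Y,Y)^{-1}$.

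Next, the balance law of momentum follows by summing \eqref{equa--SDS} over the particle index, that is, by pairing its right-hand side with the constant vector $\mathbf 1\in\RN$. The adjoint identity \eqref{equa-diverg} turns $\la\mathbf 1,(\nabla_S)^T A_S\ra$ into $\la\nabla^K_S\mathbf 1,A_S\ra$, which vanishes because the $K$-gradient annihilates constant fields --- the discrete counterpart of $\nabla(\mathrm{const})=0$ --- leaving $\tfrac{d}{dt}\sum_n S^n=\sum_n r(t,S^n)$. For the sup-norm estimate I would differentiate $t\mapsto\sum_n|S^n(t)|^2$, insert the self-adjoint form, bound the drift contribution by the growth hypothesis $|r(t,x)\cdot x|\le\lambda|x|^2$ from \eqref{equa-assume}, and control the diffusion contribution through the upper ellipticity constant $\alpha_+$; a Grönwall inequality then integrates to the stated exponentially weighted bound. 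The subtlety here is that the positive-definiteness of $B_S$ makes the diffusion term \emph{spread} the particles, so I must verify that its contribution is genuinely absorbed into the exponential rate rather than surfacing as an additive source term.

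The error estimate requires no new work: it is precisely the master inequality \eqref{equation:MCE}--\eqref{equation-EE} of Section~\ref{section--2}, applied verbatim with $\mu=\muexa(t)$ and the empirical measure $\mu_S(t)$, so that the point-evaluation bound \eqref{539} and the discrepancy functional do all the work.

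The convergence statement $\dK(\muexa(t),\mu_S(t))\to 0$ is where the main difficulty lies, and I would attack it in two stages. First, the stability estimates above furnish a uniform-in-$N$ second-moment bound, hence tightness of the particle clouds, together with consistency of \eqref{equa--SDS} as a discretization of the uniformly parabolic problem \eqref{equa--FP-mod}. Second --- and this is the delicate part --- one must show that along the flow $S(t)$ relaxes onto a nearly sharp discrepancy sequence whose minimal discrepancy against $\muexa(t)$ tends to zero as $N\to+\infty$, thereby transferring the quadrature convergence of optimal point sets established in \cite{PLF-JMM-estimate} to the time-dependent measure $\muexa(t)$. Quantifying this relaxation and making it uniform in $t$ is the essential obstacle, and is presumably why the authors defer the full analysis elsewhere.
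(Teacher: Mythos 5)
You should first be aware that the paper contains no proof of Claim~\ref{SD2}: the claim is introduced with the phrase ``by direct considerations, the first step of the algorithm is seen to enjoy the following properties,'' and the only argument actually written out in Section~4 is the proof of the companion Claim~\ref{KSD} for the Kolmogorov scheme. Your outline follows the same spirit as that proof (symmetric form, energy identities, pairing with constant vectors, error estimate imported verbatim from Section~\ref{section--2}), so the comparison below is against that analogue rather than against a proof of the statement itself.

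There are, however, concrete gaps in your proposal. First, both your self-adjointness step and your balance law rest on exact kernel-reproduction identities that a generic admissible kernel does not satisfy. The diffusion term of \eqref{equa--SDS} is the fixed vector $(\nabla^K_S)^T A_S$ (a $K$-divergence), not an operator acting on $S$; to recast it as $B_S S$ with $B_S=(\nabla^K_S)^T A_S \nabla^K_S$ you need $\nabla^K_S S=\Id$, i.e.\ exact reproduction of the coordinate functions, and your momentum computation likewise needs $\nabla^K_S \mathbf{1}=0$ rather than merely ``the $K$-gradient annihilates constants.'' Neither identity holds exactly in $\Hcal^K_S(\Omega)$ in general; the paper concedes precisely this point in the proof of Claim~\ref{KSD}, where mass conservation requires $\nabla_S 1_N=0$, ``fulfilled as soon as a polynomial basis is added.'' Without that augmentation of the kernel space your first two bullet points hold only up to consistency errors, not as exact identities. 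Second, the sign difficulty you flag in the sup-norm estimate is not a side remark but a breaking point of the argument as you set it up: if $\frac{d}{dt}S=r_S+B_S S$ with $B_S$ positive-definite, then $\frac{d}{dt}\sum_n |S^n|^2 = 2\la S, r_S\ra + 2\la S, B_S S\ra$ has a nonnegative diffusive contribution, and Gr\"onwall then yields a rate involving $\alpha_+$ (or $\|B_S\|$), not the pure rate $\lambda$ of the claimed bound (note also that $|r\cdot x|\le\lambda|x|^2$ gives the rate $2\lambda$, not $\lambda$). To obtain the stated estimate the diffusion must enter dissipatively, i.e.\ with the sign convention of \eqref{equa--FP-M}, where $B_S=-(\nabla_S)^T A_S(\nabla_S)$; you would need to fix the convention and rerun the computation rather than hope the term is ``absorbed.'' Finally, your treatment of the convergence property is a program (tightness, consistency, relaxation onto sharp discrepancy sequences) rather than a proof; that is indeed the genuinely open step, and in fairness the paper does not prove it either.
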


%==================================================================================

\subsection{A semi-discrete scheme for the Kolmogorov equation}
 
We present here some basic estimates enjoyed by our scheme and, in addition, numerical experiments demonstrate that it also enjoys better convergence rates in comparison to classical finite difference schemes.  
For simplicity in the presentation and without genuine loss of generality we assume that $M=1$, that is, the main unknown $P$ of the Kolmogorov equation is now scalar-valued. 
At this stage, we have determined a map $t\in [0,T] \mapsto S(t) \in \RR^{N\times D}$ by the algorithm in Claim~\ref{SD2} and we can thus turn our attention to solving the (transported version of the) Kolmogorov equation \eqref{equa--KE}. 
In fact, since the transport part has been treated in the first step, we can focus our attention on the diffusion part, so without loss of generality we assume that the drift $r$ vanishes identically and we study the martingale case \eqref{equa--KEmart}. 
Hence, the Kolmogorov equation reads 
\bel{equa-Kolmo4}
\del_t P - \Lcal^* P = 0, \qquad \Lcal^* P = A \cdot \nabla ^2 P.
\ee
Our backward semi-discrete scheme for computing the solution $P=P(t,x)$ for $x \in \RN$ and $t \in [0,T]$ reads 
\bel{KDS}
\frac{d}{dt} P = - B_S P, 
\qquad
\quad B_S  =  (\nabla^K_S)^T A_S (\nabla^K_S), 
\ee
supplemented with the initial condition $P|_{t=T} = P _T(S(T))$ prescribed at the time $T$.
Recall that the map $t \mapsto S(t)$ is computed by the semi-discrete scheme \eqref{equa--SDS}, which can be expressed, 
% { p 
as $\frac{d}{dt}S = (\nabla_S^T) A_S$. 
On the other hand, the scheme for $P$ can be written in the compact form 
\bel{mart}
\frac{d}{dt} P = - \Big((\nabla^K_S)^T \big( (\nabla_S^K)^{-T}\frac{d}{dt} S \big)  (\nabla^K_S) \Big)  P,
\ee
where $(\frac{d}{dt} S)  (\nabla^K_S) \in \RR^{N \times N}$ denotes the contraction of the discrete operator
 $(\nabla^K_S) \in \RR^{D \times N \times N}$ and the matrix $(\frac{d}{dt} S) \in \RR^{ N \times D}$ in the first  and second components.

\begin{claim}[Semi-discrete algorithm for the Kolmogorov equation]
\label{KSD} 
 
Under the assumption \eqref{equa-assume}, consider $t \in [t_0, T]
 \mapsto S(t) \in \RR^{N \times D}$ a solution to transported scheme \eqref{equa--SDS}, with no drift, that is $r\equiv 0$, modeling a martingale process. Consider the scheme \eqref{KDS}. 
\bei
\item 
{\bf Consistency.} The scheme \eqref{KDS} is consistent with a transported version of the Kolmogorov equation \eqref{equa-Kolmo4}.
(Cf.~\eqref{KTS} below.) 

\item {\bf Sup-norm estimate:} 
% Prov 
$\sup_{s \in [0,t]} \sum_{n=1}^N  (P^n(s))^2 \leq \sum_{n=1}^N  (P^n(t))^2$ for all $t \in [0, T]$, 
which is an analogue of the decay property of the moment $\int P^2 d\muexa$ for exact solutions. 

\item  {\bf Moment conservation property:} 
%Provide 
$
\sum_{n=1}^N P^n(t) =  \sum_{n=1}^N P^n(T)$ for all $t \in [t_0, T]$, 
which is an analogue of the moment $\int P d\muexa$ for exact solutions. 

\item {\bf Stochastic property.} 
If $\Pi = \Pi(s,t) \in \RR^{N \times N}$ denotes the forward generator of \eqref{KDS}, that is, the solution to 
\bel{PI}
\frac{d}{dt} \Pi(s,t) =  B_S(t) \Pi (s,t), 
\qquad \Pi(s) =I_{N \times N}, 
\quad t \in [s, T],  
\ee
then one has 
$
P(s) = \Pi(s,t) P(t), \qquad s \in [t_0,t].  
$
The matrix $\Pi(s,t)$ is a stochastic matrix provided $\nabla_S^T A_S \nabla_S$ is essentially non-negative.
\eei
\end{claim}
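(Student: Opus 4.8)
The plan is to treat the four items through a single structural object, the symmetric matrix $B_S=(\nabla^K_S)^T A_S\,\nabla^K_S$ appearing in \eqref{KDS}, whose sign and null space govern everything. I would first dispatch \textbf{consistency} separately: rewriting the scheme in the form \eqref{mart}, one recognizes $\frac{d}{dt}P=-(\nabla^K_S)^T\big((\nabla^K_S)^{-T}\frac{d}{dt}S\big)(\nabla^K_S)P$, so that the moving-grid velocity $\frac{d}{dt}S=(\nabla_S)^TA_S$ from \eqref{equa--SDS} enters explicitly. A Taylor expansion of the kernel operators $\nabla^K_S$ about a smooth exact solution $P(t,\cdot)$ shows that $-B_SP$ reproduces $\Lcal^*P=A\cdot\nabla^2P$ up to the transport term carried by the moving particles; collecting these contributions yields the transported version of \eqref{equa-Kolmo4} quoted in the statement, with a consistency error controlled by the regularity of $K$ and by the discrepancy of $S(t)$.

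For the \textbf{sup-norm} and \textbf{moment} identities I would use the energy method. Testing \eqref{KDS} against $P$ gives $\frac{d}{dt}\sum_n (P^n)^2=-2\,P^TB_SP$, and since $B_S=(\nabla^K_S)^TA_S(\nabla^K_S)=G^TA_SG$ with $G=\nabla^K_S$ and $A_S=\tfrac12\sigma\sigma^T\succeq0$, we have $P^TB_SP=(GP)^TA_S(GP)\ge0$; this is exactly the coercivity recorded in Claim~\ref{SD2}, and it yields monotonicity of the quadratic moment along the solve, i.e.\ the stated analogue of the $L^2$-decay. Testing instead against the constant vector $\mathbf 1$ gives $\frac{d}{dt}\sum_n P^n=-\mathbf 1^TB_SP$, and the key point is that the $K$-gradient annihilates constants, $\nabla^K_S\mathbf 1=0$; hence $\mathbf 1^TB_S=(G\mathbf 1)^TA_SG=0$ and the first moment is exactly conserved.

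The \textbf{stochastic property} is then obtained from the linear matrix ODE \eqref{PI}. Uniqueness for \eqref{PI} together with \eqref{KDS} gives $P(s)=\Pi(s,t)P(t)$ immediately. To show $\Pi(s,t)$ is bi-stochastic I would argue in two steps. First, $B_S\mathbf 1=0$ (from $\nabla^K_S\mathbf 1=0$) makes $v(t)\equiv\mathbf 1$ a solution of $\dot v=B_Sv$, so $\Pi(s,t)\mathbf 1=\mathbf 1$; the symmetry of $B_S$ (Claim~\ref{SD2}) then gives $\mathbf 1^T\Pi(s,t)=\mathbf 1^T$ as well, so that rows and columns sum to one. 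Second, under the assumed essential non-negativity of $(\nabla^K_S)^TA_S(\nabla^K_S)$ --- that is, its off-diagonal entries have the sign making it a Markov generator --- the generated semigroup $\Pi(s,t)$ is entrywise non-negative, by the standard fact that a Metzler generator produces a positive semigroup. Combining the two steps, $\Pi(s,t)$ is doubly stochastic. This also re-proves the previous items transparently: a doubly stochastic matrix is an $\ell^2$-contraction (Birkhoff: a convex combination of permutations), giving $\|P(s)\|_2=\|\Pi(s,t)P(t)\|_2\le\|P(t)\|_2$, and it preserves $\mathbf 1^TP$.

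The main obstacle is the compatibility of the two roles played by $B_S$. The energy estimate uses $B_S\succeq0$, while the Markov-generator structure needed for positivity of $\Pi$ forces $B_S$ to be a (negative) graph-Laplacian type operator; the two are reconciled only through the null condition $\nabla^K_S\mathbf 1=0$ and the essential-non-negativity hypothesis, neither of which is automatic for a general admissible kernel. I therefore expect the delicate part to be twofold: establishing exact reproduction of constants by the $K$-gradient (so that $B_S\mathbf 1=0$ holds, not merely approximately), and verifying the essential-non-negativity/Metzler property of the kernel diffusion operator, which is what underlies the probabilistic reading of $\pi^{(t,s)}_{n,m}$ as genuine transition probabilities. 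The sign bookkeeping between the forward dissipation of \eqref{equa-Kolmo4} and the backward solve, together with the measure-weighting implicit in $\tfrac1N\sum_n(P^n)^2\approx\int P^2\,d\muexa$, is where I would be most careful.
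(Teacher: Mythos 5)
Your proposal reproduces the paper's proof almost item by item. The sup-norm estimate is obtained there exactly as you do, from $\frac{d}{dt}\|P\|_{\ell^2}^2 = -\la A_S\nabla_S P,\nabla_S P\ra_{\ell^2}\le 0$; the moment conservation from testing against $1_N$ and using $\nabla_S 1_N=0$; and the stochastic property from $\Pi\simeq\exp\big(\int B_S\big)$ together with Varga's theorem (essentially non-negative generator if and only if non-negative exponential), which is precisely your Metzler argument. One of your stated worries is settled in the paper: $\nabla_S 1_N = 0$ is indeed not automatic, and the paper notes it holds ``as soon as a polynomial basis is added'' to the kernel. You add two things the paper does not spell out: the column-sum identity $\mathbf{1}^T\Pi=\mathbf{1}^T$ deduced from symmetry of $B_S$ (the paper asserts bi-stochasticity elsewhere but proves only preservation of row sums), and the Birkhoff remark that a doubly stochastic matrix is an $\ell^2$-contraction. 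The only genuinely different route is consistency: you propose a Taylor/local-truncation expansion of the kernel operators, whereas the paper argues by duality, differentiating the pairing $\int P\,d\mu = \int (P\circ S)\,dx$ along exact solutions of the Kolmogorov and driftless Fokker--Planck equations to produce the transported equation \eqref{KTS}, and then reading off $\nabla_S^T A_S$ as the discrete counterpart of $\nabla\cdot\big((A\circ S)\mu\big)$. The paper's route is better matched to the statement, since the claim is consistency with exactly that transported equation; a truncation analysis of kernel interpolants would need quantitative interpolation estimates that neither you nor the paper supplies.

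One caution about your closing paragraph: the tension you flag between the two roles of $B_S$ is not ``reconciled'' by the null condition and the essential-non-negativity hypothesis --- it is a genuine incompatibility, shared by the paper. A symmetric matrix that is positive semi-definite (needed for the energy estimate), annihilates $\mathbf{1}$ (needed for moment conservation), and has non-negative off-diagonal entries (the hypothesis of the stochastic property) must vanish: zero row sums force $B_{nn}=-\sum_{m\ne n}B_{nm}\le 0$, positive semi-definiteness forces $B_{nn}\ge 0$, hence $B_{nn}=0$ and then every off-diagonal entry vanishes; so the three items coexist only in the trivial case $\Pi=\Id$. Relatedly, integrating $\frac{d}{dt}\|P\|_{\ell^2}^2\le 0$ from $s$ to $t$ yields $\|P(s)\|_{\ell^2}\ge\|P(t)\|_{\ell^2}$ for $s\le t$, the \emph{reverse} of the inequality stated in the claim, whereas your Birkhoff argument yields the stated direction --- your two derivations of the sup-norm bound contradict each other, a symptom of the same sign problem. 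Since the paper's own proof commits identical slips, this does not place your proposal below the paper's standard; but correct ``sign bookkeeping'' here means recognizing that one orientation must be flipped (most naturally, reading the stochastic property for $-B_S$, i.e., the forward heat orientation), not merely verifying the listed hypotheses.
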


The latter condition can be ensured by restricting attention to a class of kernels, which should be in agreement with the dissipative aspects of the Kolmogorov operator.
We can view $\Pi(s,t)$ as a {\sl transition probability matrix} between times $s$ and $t$. 
We recall that an \emph{essentially non-negative matrix} $M = (m_{ij})$ is a matrix with
 positive off-diagonal elements $m_{ij} \ge 0, i\neq j$. We recall also the following result (see \cite{Varga}): a matrix $M$ is an essentially non-negative matrix if and only if $\exp(tM)$ is positive.
The stochastic property is an important property to check for in mathematical finance applications, and the 
associated scheme \eqref{KDS} satisfies a discrete maximum principle provided this condition is fulfilled. 
The condition 
$\big( (\nabla_S)^T A (\nabla_S) \big)_{n,m} \geq 0$ for all $n\ne m$ (with $1 \leq n,m \leq N$)
 is a technical condition that restricts the possible shape of the basis functions.

%\
% 

\begin{proof}[Proof of Claim \ref{KSD}] 
 
{\sl Consistency of the approximation scheme.} 
We consider the following weak formulation, for any solution $P$
to the Kolmogorov equation \eqref{equa-Kolmo4} and any measure solution $\mu(t,\cdot)$ 
to the Fokker-Planck equation \eqref{equa--FP-mod} without drift term (that is, $r \equiv 0$): 
\be
	\frac{d}{dt} <\mu, P>_{<\mathcal{D}',\mathcal{D}>} = -< A \cdot \nabla^2 P, \mu >_{<\mathcal{D}',\mathcal{D}>} + < P,  \nabla^2 \cdot (A\mu) >_{<\mathcal{D}',\mathcal{D}>} = 0,
\ee
that is,  
\be
	0 = \frac{d}{dt} \int P(t,\cdot) \mu(t,\cdot) = \frac{d}{dt} \int (P \circ S)(t,\cdot) dx. 
\ee
Here, $S(t,\cdot)$ is a transport map of $\mu(t,\cdot)$, solving  the equation without drift ($r\equiv 0$). 
We obtain 
% get  
\bel{KTS}
	\frac{d}{dt} (P \circ S) = (\del_t P) \circ S + \del_t S (\nabla P)\circ S =  (\del_t P) \circ S + \nabla \cdot \Big( (A \circ S) \mu \Big) (\nabla P)\circ S  = 0, 
\ee
% Thus, 
and $\nabla \cdot \Big( (A \circ S) \mu \Big)$ is consistent with the operator $\nabla_S^T A_S$, and we conclude that the semi-discrete scheme
$
	(\del_t P) \circ S = -\nabla_S^T A_S \nabla_S P, \quad P \in \RR^{N}
$
is consistent with the transported Kolmogorov equation.
This semi-discrete scheme is mass-conservative, that is, 
$$
\frac{d}{dt} <P,1_N>_{\ell^2} = -\la A_S \nabla_S P,\nabla_S 1_N \ra_{\ell^2} = 0,
$$
as soon as $\nabla_S 1_N = 0$, that is fulfilled as soon as a polynomial basis is added.

\vskip.2cm

{\sl Global existence.} The energy bound
$
\frac{d}{dt} \|P\|^2_{\ell^2} = -\la A_S \nabla_S P,  \nabla_S P \ra_{\ell^2} \le 0
$
as soon as the matrix field $A_S$ is positive definite, hence leading to the estimate
$
	\|P(s)\|_{\ell^2} \leq \|P(t)\|_{\ell^2}, \quad s \leq t,
$
and the scheme is a stable contracting scheme for the discrete $\ell^2$ norm.

\vskip.2cm

{\sl Stochastic property.}
A stochastic matrix $M\in \RR^{N\times N}$ is a positive matrix $m_{i,j}\ge 0$, for which $\sum_j m_{i,j} = 1$, 
that is, 
the vector $1_N$ is an eigenvector associated with the eigenvalue $1$. 
The same argument as the one for the mass conservation above applies: 
$$
\frac{d}{dt} \Pi 1_N =-\la A_S \nabla_S \Pi,\nabla_S 1_N \ra_{\ell^2} = 0. 
$$
The positivity $m_{i,j}\ge 0$ follows from \eqref{PI}, from which we deduce
$
\Pi \simeq \exp(\nabla_S A_S \nabla_S). 
$
Provided the matrix $\nabla_S A_S \nabla_S)$ is essentially non-negative, it is positive. Obviously $\frac{d}{dt} \pi_{n,n} \leq 0$, $\pi_{n,n}(0) =1$, but remains positive.
\end{proof}
 
%==================================================================================

\subsection{Fully discrete, monotone, and entropy dissipative schemes}

\paragraph*{Reduction to a martingale process.}

We now present a full discretization of the Fokker-Planck equation \eqref{equa--Lcal} and 
the Kolmogorov equations \eqref{equa--KE}, which are based on the semi-discrete schemes \eqref{equa--SDS}
 and \eqref{equa-KED}. 
 We introduce a time discretization $0=t^0<t^{\epsilon}<t^1 \ldots<t^{N_t}=T$ 
 with forward increments $\delta^n = t^{n+1}-t^n$. 
 The discrete solutions are represented in a matrix form as 
 $S^n\simeq S(t^n) \in \RR^{N_S \times D}$, $P^n\simeq P(t^n) \in \RR^{N_S \times D_P}$, $A^n\simeq A(t^n,S^n) \in \RR^{N_S \times D \times D}$, etc. 
Consider the scheme \eqref{equa--SDS}, and observe that it consists of a hyperbolic part and a (martingale) diffusive part, so at each time step $t^n$ we split \eqref{equa--SDS} as follows.
 The hyperbolic part is straightforwardly integrated with a classical ODE discretization and we denote 
 by $V^{n+1}$ the corresponding discrete solution produced at the time $t^{n+1}$
 which is an approximation to
\bel{equa--FP-H}
\frac{d}{dt} V = r_{V}, \quad V^n = S^n,\quad t^n \le t \le t^{n+1}. 
\ee
Once $V^{n+1}$ is computed, the numerical scheme for the martingale part is written down, and is considered with the initial condition $V^{n+1}$ at the time $t^n$ while $r_S$ can be replaced by zero, 
that is, we solve 
\bel{equa--FP-M}
\frac{d}{dt} S = B_S S, \quad  B_S  = -\big( \nabla_{S} \big)^T A_{S} \big( \nabla_{S} \big)
\ee
From now on, in order to present our fully discrete scheme we can thus 
focus on the martingale case without discussing further on the hyperbolic part.

%--------------------------------------------

\paragraph*{Fully discrete algorithm.}

To compute the solution to \eqref{equa--FP-M}, we first rely on a simple Euler scheme 
% (or Millstein)
 at an intermediate time $t^{n+1/2}$: 
\bel{Euler}
S^{n+1/2} =S^{n} +  \sqrt{(1/2) \, A^n \delta^n} \, \eps_{\mathcal{N}}, 
\ee
where we have the following properties. 

\begin{itemize}
\item We recall that $A^n \in \RR^{N_S\times D \times D}$  represents a field of symmetric positive definite matrix. The square root of $(1/2) A^n \delta^n$ is defined by taking the principal positive square root of each matrix, 
therefore
 $\sqrt{(1/2) \, A^n \delta^n}  
 = \big( \sqrt{(1/2) \, A^n(t^n,S^n,S^n) \delta^n} \big)_{n = 1, \ldots, N_s} \in \RR^{N_S\times D \times D}$.

\item $\eps_{\mathcal{N}} \in \RR^{N_S \times D}$ is selected once for all as a given sampling of the $D$-dimensional normal law $\mathcal{N}^D(0,1)$, and 
we pick up $\eps_{\mathcal{N}}$ as an approximation of a sharp discrepancy sequence of $\mathcal{N}^D(0,1)$;
 see \eqref{equa--SDS}. 
 
 \item $\sqrt{(1/2) \, A^n \delta^n}  \, \eps_{\mathcal{N}}$ stands for the multiplication 
 $\big (\sqrt{ (1/2) \, A^n(t^n,S^n,S^n) \delta^n} \, \eps_{\mathcal{N}}^n\big)_{n = 1, \ldots, N_s}$, 
 which belongs to $\RR^{N_S \times D}$ and thus is a vector field.

\end{itemize}
We now solve \eqref{equa--FP-M}. Observe that a Crank Nicolson scheme for the approximation of this equation would produce, at time $t^{n+1}$, 
\bel{CN}
 S^{n+1}= \big( \Id - \delta^n B_{S^{n+1/2}} \big)^{-1} \, \big( \Id + \delta^n B_{S^{n+1/2}} \big) \, V^n. 
\ee
However, this approach will not be followed here since, although the Crank-Nicolson scheme enjoys nice properties in terms of entropy dissipation, it does not lead to a {\sl monotone scheme.}

The matrix $B_S$ is symmetric and we can work with few points $N_S$. We summarize our numerical scheme for the Fokker Planck equation as follows : 
\bel{CNFP}
\aligned
S^{n+1/2} &
=S^{n+1} + \sqrt{\frac{A^n \delta^n}{2}}  \eps_{\mathcal{N}}, 
\qquad  S^{n+1} = \Pibf^{n,n+1} S^{n}
\\
 \Pibf^{n,n+1} &  = \exp\big( \delta^n B^{n+1/2} \big), 
 \qquad 
 B^{n+1/2}  
 = -\big( \nabla_{S^{n+1/2}} \big)^T A_{S^{n+1/2}} \big( \nabla_{S^{n+1/2}} \big). 
\endaligned
\ee
{
Once the computation is done for all $0<t^0< \ldots <t^{N_t} = T$, the backward  equation 
reduces to saying 
% \bel{CNK}
$P^{n+1} = \Pibf^{n,n+1} P^{n+1}$. 
}

%---------------------------------------------------------------------------------------

Concerning the fully discrete scheme \eqref{CNFP}, our main result is (unsurprisingly) similar to Claims~\ref{SD2} and \ref{KSD}. 

\begin{claim}[Fully discrete algorithm of the coupled Fokker-Planck-Kolmogorov system]
 \label{SD2D}
Consider a time discretization $t^0<t^{\epsilon}<t^1 \ldots<t^{N_t} =T$ and the full time-discrete numerical scheme above, for the approximation of the transported Fokker-Planck equation \eqref{equa--Lcal}, under the ellipticity 
and linear growth conditions \eqref{equa-assume}.
Then the scheme above defines a fully time-discrete family of moving particles 
$S^{n} =(S^{n}_{m,d})_{m,d} \in \RR^{N_S \times D}$ together with an associated discrete 
measure $\mu_{S^{n}} = \frac{1}{N_S} \sum_{m} \delta_{S^{n_t}_{m}}$,  
which determine a convergent approximation of the solution $\muexa(t^n)$ to \eqref{equa--FP-mod}, as follows. 

\bei
\item {\bf Sup-norm estimate:} 
%The m 
$
\sup_{ n } e^{-\lambda t^n} \sum_{n=1}^N |S^n|^2 \leq \sum_{n=1}^N  |S^0 |^2.
$
% \ee

\item {\bf Balance law of momentum:} 
% The 
$
 \langle S^n_d,1_{N_S} \rangle_{\ell^2} = \langle S^0_d,1_{N_S} \rangle_{\ell^2}$
 for all $d=1,\ldots, D
$
and, in particular, the total momentum is constant in time when the drift vanishes identically. 

\item {\bf Error estimate.} For any relevant test-function $\varphi$ and any time $t \geq 0$ one has 
\be
\bigg|\int_{\RR^D} \varphi(x) d\muexa(t^n,x) - {1 \over N}  \sum_{n=1}^N  \varphi(S^n)) \bigg| 
\leq 
\dK(\muexa(t^n), \mu_{S^n}) \,  \|\varphi \|_{\Hcal^K(\RD)}. 
\ee
% wher 

\item {\bf Stochastic property.} The matrix $\Pibf^{n,n+1} \in \RR^{N_S \times N_S}$, describing 
the transition probability 
$\pi_{l,m} = \mathbb{P}(S^{n+1}_m | S^{n}_l)$, is a 
stochastic matrix at each time $t^n$, which is also
 positive provided $(\nabla_S)^T A (\nabla_S)$ is positive (for all $n\ne m$).

\eei

\end{claim}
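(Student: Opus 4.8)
The plan is to mirror the arguments already carried out for Claims~\ref{SD2} and \ref{KSD}, now at the level of each time step of the fully discrete scheme \eqref{CNFP}, and to reduce all four properties to two structural facts about the matrix $B^{n+1/2} = -\big(\nabla_{S^{n+1/2}}\big)^T A_{S^{n+1/2}}\big(\nabla_{S^{n+1/2}}\big)$: that it is symmetric and negative semi-definite (since $A_{S^{n+1/2}}$ is positive definite by \eqref{equa-assume}), and that it annihilates the constant vector, i.e.\ $B^{n+1/2} 1_{N_S} = 0$. The latter follows, exactly as in the proof of Claim~\ref{KSD}, from $\nabla_{S^{n+1/2}} 1_{N_S} = 0$, which holds once the discrete gradient is required to differentiate constants exactly (the polynomial-augmentation condition used there).

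First I would treat the stochastic property. Since $B^{n+1/2} 1_{N_S}=0$, the transition matrix $\Pibf^{n,n+1} = \exp(\delta^n B^{n+1/2})$ satisfies $\Pibf^{n,n+1} 1_{N_S} = 1_{N_S}$, so its rows sum to one; symmetry of $B^{n+1/2}$ makes $\Pibf^{n,n+1}$ symmetric, hence its columns sum to one as well and the matrix is bi-stochastic. For the entrywise positivity I would invoke the characterisation recalled after Claim~\ref{KSD} (see \cite{Varga}): $\exp(\delta^n B^{n+1/2})$ is positive precisely when $B^{n+1/2}$ is essentially non-negative, i.e.\ when its off-diagonal entries are non-negative. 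This is the single genuinely restrictive point of the argument, since essential non-negativity of $\big(\nabla_S\big)^T A_S\big(\nabla_S\big)$ is not automatic and constrains the admissible kernels; it is precisely the technical condition flagged in the statement, and I would carry it as a standing hypothesis on the basis functions.

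Next I would obtain the balance law of momentum by pairing the update $S^{n+1} = \Pibf^{n,n+1} S^n$ with $1_{N_S}$ componentwise in $d$: using the self-adjointness just established, $\langle S^{n+1}_d, 1_{N_S}\rangle_{\ell^2} = \langle S^n_d, (\Pibf^{n,n+1})^T 1_{N_S}\rangle_{\ell^2} = \langle S^n_d, 1_{N_S}\rangle_{\ell^2}$, while the contribution of the Euler half-step \eqref{Euler} drops out because $\eps_{\mathcal N}$ approximates a centred sharp discrepancy sequence of $\mathcal N^D(0,1)$; iterating over $n$ gives the claimed conservation in the martingale case, with the hyperbolic step \eqref{equa--FP-H} accounting for the $\sum_n r$ term when $r \not\equiv 0$. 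The sup-norm estimate I would get from a discrete Gronwall argument paralleling the energy bound in Claim~\ref{KSD}: the diffusive update contracts the $\ell^2$ norm because $B^{n+1/2}\preceq 0$ forces the spectral radius of $\Pibf^{n,n+1}$ to be at most one, whereas the hyperbolic step \eqref{equa--FP-H} inflates the energy by at most a factor $e^{\lambda\delta^n}$ thanks to $|r(t,x)\cdot x|\le\lambda|x|^2$ in \eqref{equa-assume}; multiplying these per-step bounds telescopes into $\sup_n e^{-\lambda t^n}\sum_n|S^n|^2\le\sum_n|S^0|^2$.

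Finally, the error estimate is not a new computation but a direct instance of the fundamental discrepancy inequality \eqref{equation:MCE}--\eqref{equation-EE} of Section~\ref{section--2}, applied to the discrete measure $\mu_{S^n}$ and the exact solution $\muexa(t^n)$; it holds verbatim for every admissible test-function. The main obstacle throughout is the positivity in the stochastic property, which is the only step that is not a routine transcription of the semi-discrete proofs and which forces the restriction on the kernel noted in the statement; everything else transfers from Claims~\ref{SD2} and \ref{KSD} by replacing time differentiation with the per-step estimates above.
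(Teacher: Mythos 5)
Your proposal is correct and takes essentially the approach the paper intends: the paper never actually writes out a proof of Claim~\ref{SD2D} (it only remarks beforehand that the result is ``unsurprisingly'' similar to Claims~\ref{SD2} and~\ref{KSD}), and your argument is exactly the per-time-step transcription of the proof of Claim~\ref{KSD} --- $\nabla_{S}1_{N_S}=0$ giving (bi-)stochasticity of $\Pibf^{n,n+1}=\exp(\delta^n B^{n+1/2})$, Varga's essential-non-negativity criterion (carried as a standing hypothesis on the kernel, just as the paper does) giving entrywise positivity, negative semi-definiteness of $B^{n+1/2}$ giving the $\ell^2$ contraction, and the discrepancy inequality \eqref{equation:MCE} giving the error estimate. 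The one point to watch, inherited from the paper's own inconsistent sign conventions, is that since $B^{n+1/2}=-\big(\nabla_{S^{n+1/2}}\big)^T A_{S^{n+1/2}}\big(\nabla_{S^{n+1/2}}\big)$, Varga's criterion requires $B^{n+1/2}$ itself to have non-negative off-diagonal entries, i.e.\ the off-diagonal entries of $(\nabla_S)^T A_S(\nabla_S)$ should be non-positive, not non-negative as you (following the paper's statement) write.
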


%-----------------------------------------------------------

\paragraph*{Algorithm for the transition probability matrix.}

{
It remains to  compute
 the transition probability matrix \eqref{equa--Kolm}. Our algorithm is based on the following observation. 
{ 
If $t \mapsto X_t \in \RR^D, t \ge 0$ is a martingale process, we fix any integer $N>0$, and any $0 \le s \le t$, and we denote by $x \in \RR^{N\times D}$ (respectively $y \in \RR^{N\times D}$) any i.i.d. sample of $X_t$ (resp. $X_s$).
Then, the operator $\Pi(s,t,x,y)$ is approximated by the following stochastic matrix: 
\be
	\arg \inf_{\Pi \in \mathcal{S}_N} \| y - \Pi x \|_{\ell^2} =  p(x | y) \in \RR^{N \times N}, 
\ee
where $\mathcal{S}_N$ denotes the set of $N \times N$ stochastic matrix.
Our algorithm is decomposed as follow. 
}
\begin{itemize}

\item[] {\it Step 1.} We compute $y   = y - \overline{y}$, $x  = x - \overline{x}$, where for instance $\overline{x} = \frac{1}{N} \sum_{n=1}^N x^i$.

\item[] {\it Step 2.} We solve first 
$
	\epsilon  = \arg \inf_{\epsilon \in \eps_N} \| y - \epsilon x \|_{\Hcal^K}. 
$
% wher 
This amounts to consider the Linear Sum Assignment Problem (LSAP) problem 
$\overline{\epsilon}  = \arg \inf_{\epsilon \in \eps_N}d_k(\epsilon x, y),
$
where $d_k$ is the pseudo-distance associated with the kernel. 
For the next step, we consider thus $\overline{\epsilon} \, x$ instead of $x$.

\item[] {\it Step 3.} We then deal with  
$
	\arg \inf_{\Pi \in \mathcal{S}_N} \| y - \Pi x \|_{\ell^2}. 
$
% where   
To this end, we rely on a gradient descent method and  compute
$
 \frac{d}{dt} \| \Pi x - y \|^2_{\ell^2} = \big \langle \Pi x - y, \frac{d}{dt} \Pi x \big \rangle_{\ell^2}.
$
Observe that the 
{
previous equation does not provide immediate the descent algorithm consistent with the stochastic matrix. However, \eqref{KDS}-\eqref{mart} suggests how to define a {\sl stochastic matrix projection}. Namely, we 
 can introduce the following algorithm in order to approximate the transition probability matrix
}
$ \frac{d}{dt} \Pi 
= - \Big((\nabla_y)^T A  (\nabla_y) \Big)  \Pi, \qquad \Pi(0)  = \Id,
% \\ 
$
where $(\nabla_y^T)^{-1}$ is a discrete operator defined from the kernel.

\end{itemize}
 
}

%=============================================================================

\subsection{Application to business cases}

\paragraph*{Static hedging strategy.} 

We now describe two particular applications of our method for finance problems, and we present first a static hedging strategy, which can be interpreted as a 
way to get portfolio ``protection'' agains ``negative'' events. 
Our setup will be slightly simplified and we refer to \cite{JMM-SM} for more general business cases.  
We thus consider a process $X_t$ governed by the stochastic equation \eqref{equa-SED} corresponding, for definiteness, to a $D$-dimensional martingale log-normal process modeling forward rate swaps. Such a model can be taken to be the financial model of interest rates of Brace-Gatarek-Musiela \cite{BGM1997}, also referred to as the {LIBOR market model.}  
{ 
From this stochastic process $X_t$ we define a second stochastic process, referred to as 
the \textsl{underlying process} and, for definiteness, we choose the stochastic curves (that is, the
 coefficients of the equation) to be of the following form.
 } 
\begin{itemize}

\item Stochastic discount curves $B(t,s, X_t)$ (defined for $s \ge t$), where $B>0$ is a prescribed nonlinear function.

\item Stochastic swap rates curves $r(t,s, X_t)$, defined by
$
r(t,s, X_t)= \frac{ 1 - B(t,s, X_t) }{B(t,s, X_t)(s-t)}.
$
\end{itemize}
\noindent The portfolio and hedging instruments are then described as follows. 
Given any instrument with payoff $P(t,s,X)$ (where $t$ represents the initial time of the contract), we can 
consider  its fair values $P (t,s,X_t)$ given by the Kolmogorov equation \eqref{equa--KE}, 
or else we consider its sensitivities $\nabla P (t,s,X_t)$ computed as $\nabla_y P (t,y^n(t))_{1 \leq n \leq N}$.
These values are computed by the discretization scheme \eqref{equa-KED}. 
In \cite{JMM-SM} the Economic Value of Equity (EVE) and the Net Income Interest (NII) indicators were chosen as instruments. They are written on top of each loans and assets of the bank account sheet, and they have both swaps rate curves and discount curves as underlyings. 
{ 
NII and EVE describe, with this simple model, {\sl perpetual swaps}. However, they can be turned into barrier options (statistical model) or Bermudan swaptions (rational model), depending on how renegotiation and refund effects are taken into account. 

Fair values and their sensitivities are themselves two stochastic processes of interest in the applications, and we now present some of their numerical simulation.
}
 In Figure~\ref{EVEC}, the left-hand figure represents the expectation of the EVE process as a function of time, while the right-hand figure shows the sensitivities (at time $t=1$) with respect to a particular underlying, called LIB1Y in our model. We considered here the EVE indicator together with a statistical negociation model. 
In this setting, a hedge instrument is also a derivative, having payoff $H_m(t,s,X)$, inside an hedging portfolio of $M$ instruments $H = (H_1,\ldots,H_M)$. For instance we considered swaps, caps, and floors as hedging instruments in \cite{JMM-SM}, but swaptions, even Bermudan ones could be included. 

%---------------------------------------------------------------------------------------------------------------------------------------------
\begin{Figure}
\includegraphics[width=0.42\linewidth]{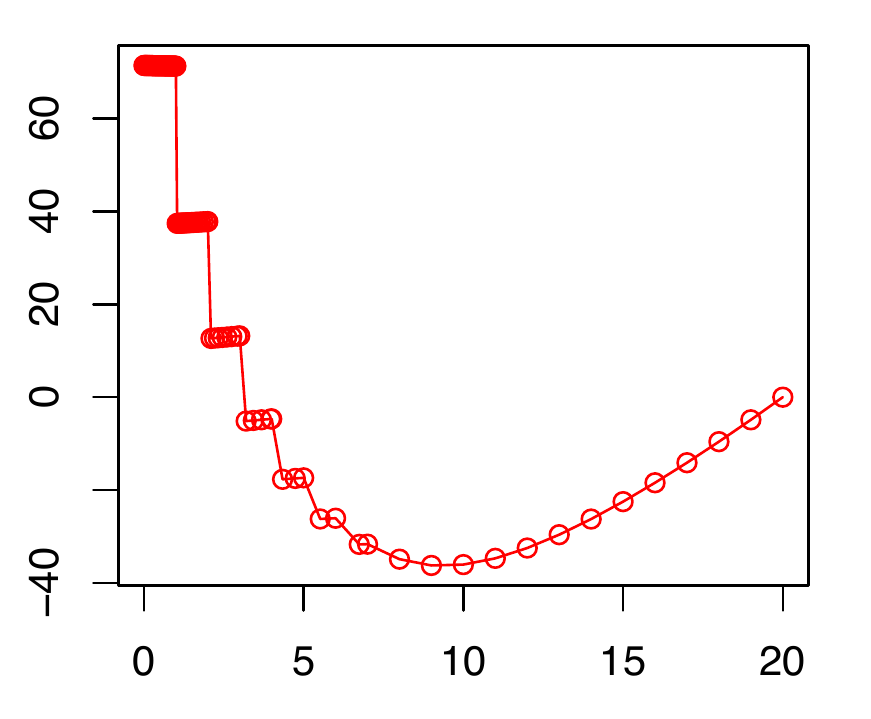} \includegraphics[width=0.42\linewidth]{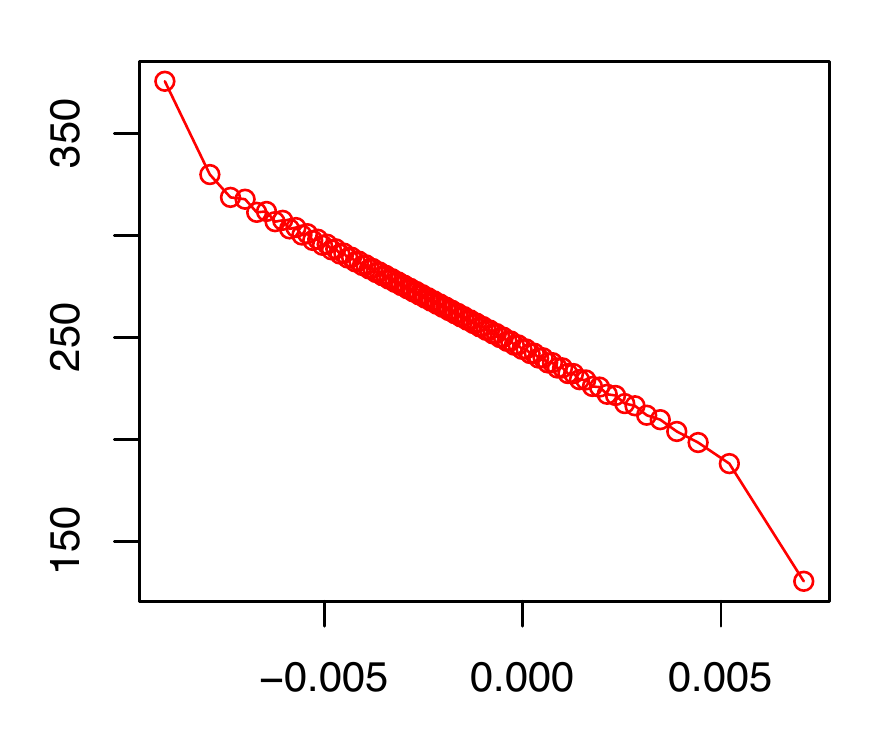} 
%%%% ? \centerl 
%
\centerline{(a): horizontal axis $t$,  vertical axis $E$; \qquad\qquad\qquad (b):  horizontal axis $x^1$, vertical axis $S$}
\captionof{figure}{  (a) expectation. (b) sensitivity at the time $t=1$}
\label{EVEC}
\end{Figure}

%---------------------------------------------------------------------------------------------------------------------------------------------

\paragraph*{Hedging strategies.}

A {\sl static} hedging strategy, set at the time value $t$, is any function $\alpha(t)=(\alpha_1, \ldots,\alpha_M)(t) \in \RR^M$
 defining a portfolio with payoff $H_\alpha= <\alpha,H>$. 
 { 
 Our algorithm allows us to compute any strategy associated with a criterion such as 
 }
\bel{303}
\overline{\alpha}= \arg \inf_{<\alpha,\overline{H}> \leq C } \quad \int_{s \ge t} 
{\mathbf F} \big([P  - <\alpha,\overline{H}>](t,s,\cdot)\big) \, ds, 
\ee
in which
{ 
%\bei
$\mathbf F$ is a convex functional and, for instance, can be chosen to be the variance functional 
$\int_{\RR^D} ( P  - P _{mean})^2d\mu$, or the sensitivity functional $ S=\int_{\RR^D} \big | \nabla P \big|^2 d\mu$. 
The restriction $<\alpha,\overline{H}> \leq C$ is included here but need not be imposed, which is certain applications in finance. This constraint imposes a limit on the hedging portfolio investment value.
}
Observe that the hedging strategy \eqref{303} is based on fair values, which are computed in the course of
solving the Kolmogorov equation. On the other hand, with a standard Monte-Carlo approach it would be very costly to
compute such hedges.
In Figure~\ref{EVENIIH} we plot the numerical results for such a hedging strategy. The left-hand figure
shows the variance of the EVE over time, represented in Figure~\ref{EVEC} in the dashed curve, 
while the curve made of circles shows the variance of the EVE after hedging its sensitivities using \eqref{303}. This hedge reduces to almost zero the variance or the sensitivity of the EVE process. However, it also increases dramatically the ones of the NII process; see the right-hand figure.

%----------------------------------------------------------------------------------------------------------------------------------------
\begin{Figure}
\includegraphics[width=0.42\linewidth]{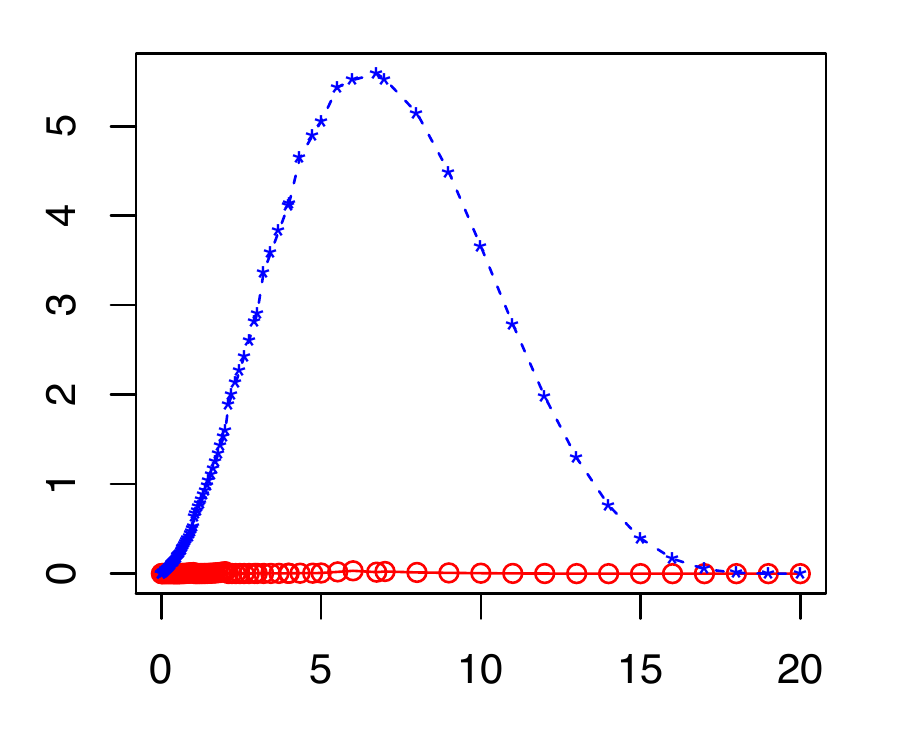} \includegraphics[width=0.42\linewidth]{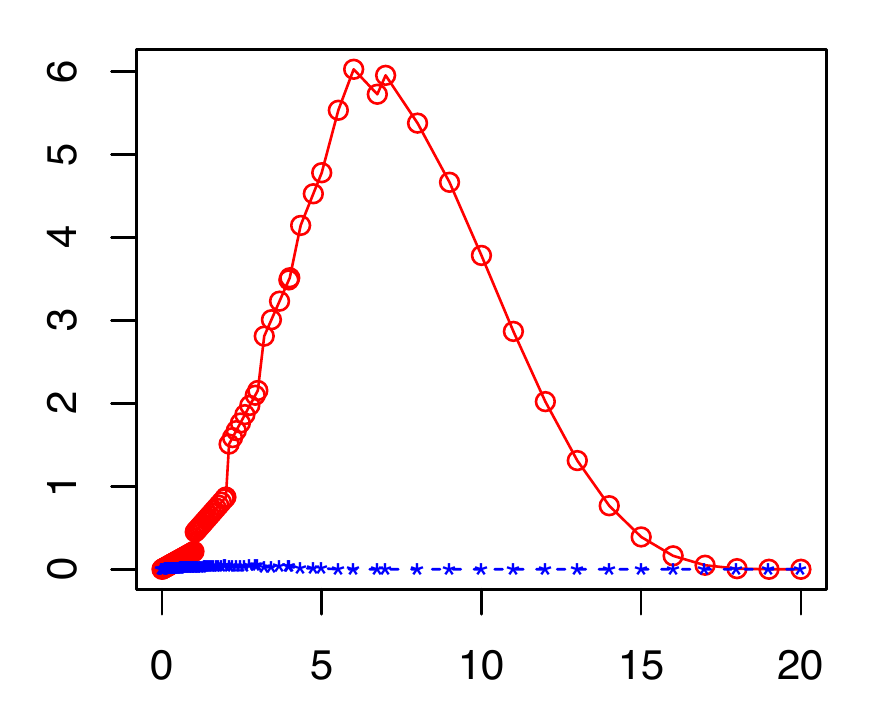} 
\centerline{(a): horizontal axis $t$,  vertical axis $E$ \qquad\qquad\qquad (b):  horizontal  axis $t$, vertical axis $E$}
\captionof{figure}{  (a): expectation EVE. (b): sensitivity EVE}
% at time $t=1$.}
\label{EVENIIH}
\end{Figure}
%----------------------------------------------------------------------------------------------------------------------------------------

\paragraph*{$\beta$-Hedging strategies.}

There are several cases of interest where one would like to mix two different hedging strategies. For instance,
 in the NII / EVE example treated so far, we are using two different hedges computed with \eqref{303}. 
 In the first one, we compute the hedging strategy minimizing the EVE sensitivities, as illustrated in Figure~\ref{EVENIIH}, while in the second one we compute an hedge aiming to reduce the NII sensitivities. 
 This second strategy is very efficient to allows one to lower the NII sensitivities or variance 
 but, as shown in Figure~\ref{EVENIIH}, it increases dramatically the ones of the EVE. 
Now, given any two strategies $\overline{\alpha}^1$ and $\overline{\alpha}^2$ determined from
  \eqref{303}, let us 
consider the new strategy
$
\alpha = \beta \, \overline{\alpha}^1 + (1-\beta) \, \overline{\alpha}^2$ for some $\beta \in [0,1]$ chosen to match some another criterion. For instance, 
in Figure \eqref{BETAEVENIIH} we choose $\beta=0.07$. The figure drawn with star symbols shows the variance of the NII and EVE. The variance associated with the $\beta$-hedge is shown by the curve drawn with circles, and 
shows an interesting reduction in comparison with, both, EVE and NII. 
{ In this numerical example, the value $\beta = 0.07$ is chosen in order to keep the variance of the EVE 
within a certain limit, 
which can be motivated by rules of internal bank or regulatory nature. Within such a limit, our choice was
to minimize the sensitivities of NII.
}

\begin{Figure}
\includegraphics[width=0.42\linewidth]{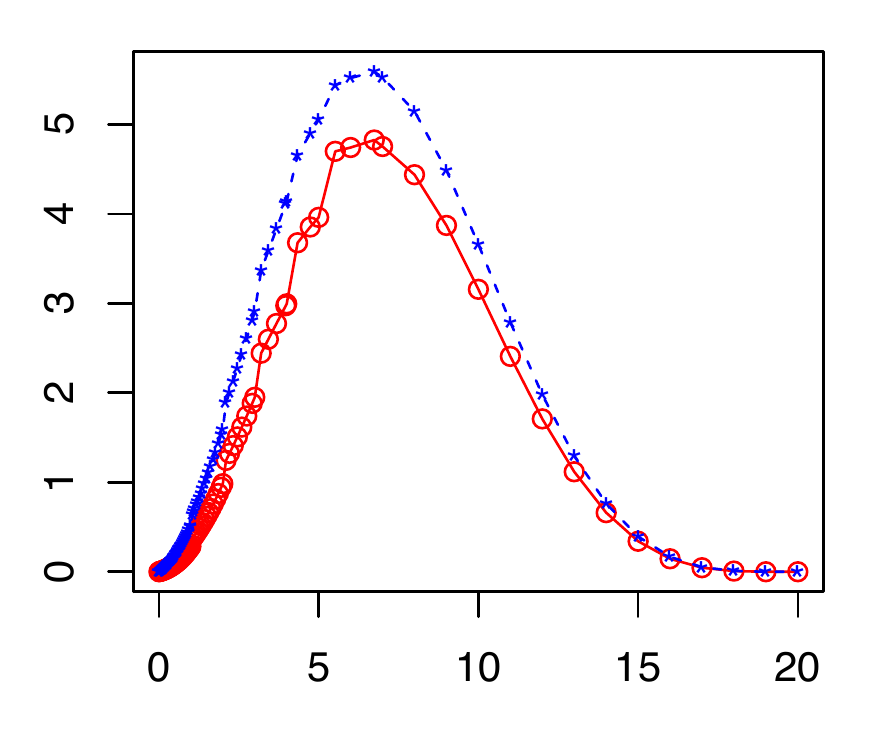} \includegraphics[width=0.42\linewidth]{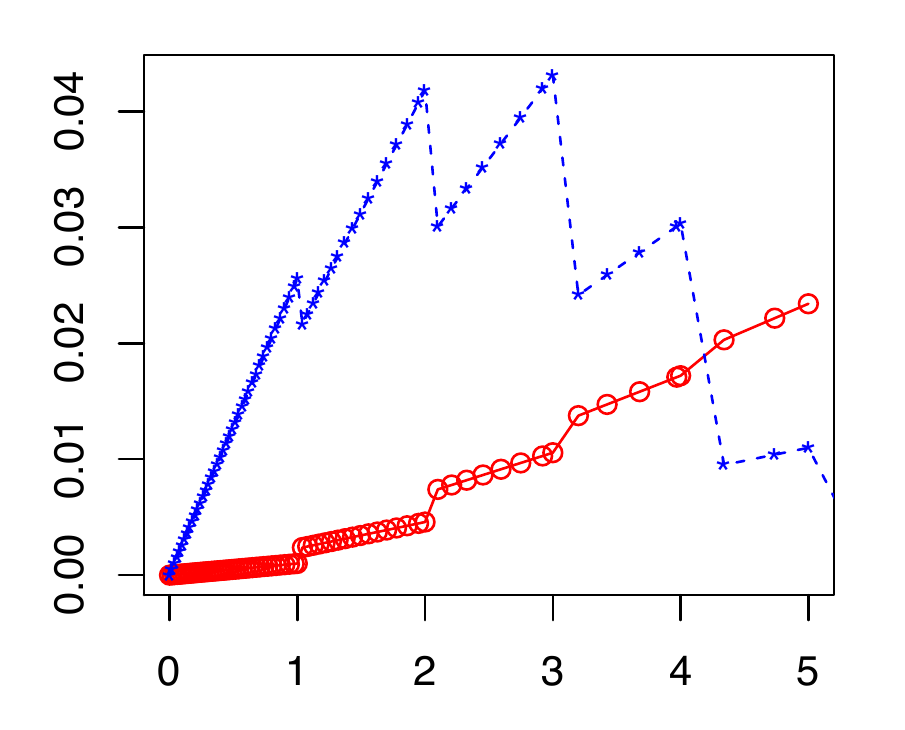} 
\centerline{(a): horizontal axis $t$,  vertical axis $E$ \qquad\qquad\qquad (b):  horizontal  axis $E$, vertical axis $S$}
\captionof{figure}{$\beta$-hedge with $\beta = 0.07$}
\label{BETAEVENIIH}
\end{Figure}

%------------------------------------------------------------------------------------------------------------------------------------

% 
 
%=====================================================================================

\section{An algorithm for polar decompositions in transportation theory} 
\label{sec-55}
 
\subsection{Polar and signed-polar decompositions in finite dimensions} 

\paragraph*{Two decompositions of complex numbers.}

The polar decomposition is an important tool in many applications, and we will now present a discrete algorithm for its computation. We begin by a discussion at the continuous level and present several (old and new) notions 
which allow one to decompose matrices and mappings. The polar decomposition of a complex number $z = \rho e^{i\theta} \neq 0$ can be revisited as follow: While it is standard to impose the normalization $\rho > 0$ and $\theta \in [0,2\pi)$, we propose here an alternative decomposition. 
 
\begin{definition} 
The {\em signed-polar decomposition} of a complex number $z= \rho e^{i\theta} \neq 0$, by definition, is 
$z= \rho' e^{i \theta'}$ 
in which one relaxes the signed condition on the modulus while the angle is restricted to remain in the ``half-space'', i.e. 
$\rho' = \pm \rho$ and $\theta' \in (-\pi/2, \pi/2]$. 
\end{definition} 

In other word, from the standard polar decomposition $z= \rho e^{i \theta}$, we  set 
\be
 (\rho', \theta') = 
\begin{cases}
(\rho, \theta), \qquad & \theta \in [0, \pi/2), 
\\
(-\rho, \theta-\pi), \qquad & \theta \in [\pi/2, 3\pi/2),  
\\
(\rho,\theta- 2\pi), \qquad & \theta \in [3\pi/2, 2\pi). 
\end{cases}
\ee
Observe that $(\rho,\theta)$ can be characterized by the property that $\theta$ is a solution to the following minimization problem 
\be
\inf_{\theta \in [0,2\pi)}  |z - e^{i\theta} |^2 
= 
\inf_{\theta \in [0,2\pi)}  |1- e^{-i\theta} z |^2, 
\ee
{ 
while $\theta'$ can be characterized by the minimization problem
}
\be
\inf_{\theta \in (-\pi/2,\pi/2]} |z - e^{i\theta} |^2 
= 
\inf_{\eps = \pm1}  \inf_{\theta \in (-\pi/2,\pi/2]} |\eps - e^{-i\theta} z |^2. 
\ee
In the standard polar decomposition, one performs a projection of $z$ on the unit circle in the complex plane, while in the signed-polar decomposition we perform a projection on the unit half-circle of complex numbers with positive real part.  
We also observe that the mapping $z \neq 0 \mapsto (\rho, \theta)$ is continuous (and even analytic) {\em except} across the positive real line, while the mapping  $z \neq 0 \mapsto (\rho', \theta')$ is continuous (and even analytic) {\em except} across the imaginary line. Interestingly, such a discontinuity property does not arise with real-valued matrices, nor mappings, as we show below. 
Furthermore, it will convenient to slightly modify the above notation. In the following, in dealing with matrices and maps, we will extend the following notation stated here for complex numbers: 
\be
\aligned
& z= \rho^+ e^{i \theta}, \qquad && \rho^+ > 0, \qquad \theta \in [0,2\pi), 
\\
& z = \rho e^{i \theta^+}, \qquad  && \rho \in \RR, \qquad \theta^+ \in (-\pi/2, \pi/2].
\endaligned
\ee 

%---------------------------------------------------------------------------

\paragraph*{Two decompositions of real-valued matrices.}

We next consider matrices and throughout we deal with real-valued $N \times N$ matrices. Recall that the standard {\em polar decomposition} of an invertible matrix $M$ reads 
\bel{PFM}
\aligned
& M = U S^\plus, 
\qquad
 U \text{ is orthogonal,}
\quad
 S^\plus  \text{ is symmetric and positive definite.}
\endaligned
\ee 
Observe that the map $M \mapsto (U,S^\plus)$ is well-defined on the set of invertible matrices, while the decomposition may fail to be unique for non-invertible matrices. In fact, when $M$ is not invertible, its symmetric part $S^\plus = (M^T M)^{1/2}$  is defined uniquely as a semi-positive definite matrix, so that uniqueness fails only for the orthogonal part $U$.
 
In contrast with the decomposition above, in the definition we propose the positivity condition is imposed on the orthogonal part of the decomposition rather than on the symmetric part.  Clearly, the space of symmetric orthogonal matrices plays a particular role here.

%\ 

\begin{proposition}
%theoremdefinition}
[The signed-polar decomposition of matrices]
 \label{PFU} 
1. Any (invertible) matrix $M$ admits a  
decomposition of the form 
\bel{SDM}
\aligned
& M = U^\plus S,  
\qquad
 U^\plus \text{ is orthogonal and positive definite,}
\quad  S  \text{ is symmetric.}
\endaligned
\ee 
referred to as its {\em signed-polar decomposition.} Moreover, the mapping $M \mapsto (U^\plus,S)$ is continuous when restricted to the space of invertible matrices. 

2. The connection between the polar and signed-polar decompositions $M = U^\plus S = U S^\plus$ of an invertible matrix is as follows: there exists a {\em symmetric and orthogonal} matrix denoted by $U_0$ such that
\be
U^\plus  = U U_0, \qquad S = U_0 S^\plus.
\ee
Furthermore, if $S = V D V^T$ denotes the spectral decomposition of $S$ in which $V$ consists of an orthogonal basis of eigenvectors and $D$ is a diagonal matrix made of its real eigenvalues, then one has
\be
S^\plus  = V D^\plus  V^T,  \qquad U_0 = V \Sigma V^T, 
\ee
where $D^\plus$ is defined by taking the absolute value of the elements of $D$ and $\Sigma$ is a diagonal matrix made of  the sign of these eigenvalues.
\end{proposition}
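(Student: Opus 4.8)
The plan is to build the signed-polar decomposition directly on top of the \emph{standard} polar decomposition $M = U S^\plus$, which exists and is unique for invertible $M$ (with $S^\plus = (M^T M)^{1/2}$ positive definite and $U = M (S^\plus)^{-1}$ orthogonal), and to transfer a symmetric orthogonal factor from the orthogonal part onto the symmetric part. Concretely, I would seek a symmetric orthogonal matrix $U_0$ that commutes with $S^\plus$ and set $U^\plus = U U_0$ and $S = U_0 S^\plus$. The commutation guarantees that $S = U_0 S^\plus$ is symmetric while $U^\plus = U U_0$ is orthogonal, and then $M = U S^\plus = U U_0 U_0 S^\plus = U^\plus S$ follows automatically (using $U_0^2 = \Id$). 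Existence thus reduces to selecting $U_0$ so that the remaining requirement — that $U^\plus$ be positive definite, i.e. that its symmetric part be positive definite — is satisfied.

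For Part~2 I would establish the clean algebraic identity first, since it drives everything. Given any two decompositions $M = U^\plus S = U S^\plus$, I compute $M^T M = S^T (U^\plus)^T U^\plus S = S^2$ (because $U^\plus$ is orthogonal and $S$ symmetric); since also $M^T M = (S^\plus)^2$, this forces $S^2 = (S^\plus)^2$, hence $S^\plus = |S|$. Writing the spectral decomposition $S = V D V^T$, this yields $S^\plus = V D^\plus V^T$ with $D^\plus = |D|$. Defining $U_0 = S (S^\plus)^{-1}$ gives $U_0 = V(D|D|^{-1})V^T = V \Sigma V^T$, with $\Sigma$ the diagonal matrix of signs of the eigenvalues, manifestly symmetric and orthogonal; and $U^\plus = M S^{-1} = U S^\plus S^{-1} = U U_0$. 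This proves all the formulas of Part~2 simultaneously, and it pins down the only freedom in the construction, namely the choice of signs $\Sigma$ (equivalently, which eigenvalues of $S$ are negative). Note that $U_0 = V\Sigma V^T$ and $S^\plus = V D^\plus V^T$ share the eigenbasis $V$, so $U_0$ indeed commutes with $S^\plus$, closing the loop with the existence construction.

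Returning to existence, I must therefore show the signs can be chosen so that $U^\plus = U V \Sigma V^T$ is positive definite. Conjugating by the orthogonal matrix $V$ (which preserves positive definiteness) reduces this to: for $\tilde U = V^T U V$, find a diagonal sign matrix $\Sigma$ so that $\tilde U \Sigma$ has positive-definite symmetric part. I would analyze this on the real block-normal form of $\tilde U$ ($1\times 1$ blocks $\pm 1$ and $2\times 2$ rotation blocks $R(\theta_k)$): each block with $\cos\theta_k \neq 0$ forces a unique local sign, giving both existence and local uniqueness of $\Sigma$ away from the degenerate configurations, where $\cos\theta_k = 0$ yields a symmetric part that is only positive \emph{semi}definite — exactly the boundary case $\theta^\plus = \pi/2$ permitted by the half-open convention in the scalar model.

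The main obstacle, which I would treat with the greatest care, is the continuity statement. Continuity of the standard polar factors $M \mapsto (U, S^\plus)$ on invertible matrices is classical; the difficulty is that the sign selection $\Sigma$ could jump. The delicate locus is precisely where $\tilde U$ (equivalently $U$) acquires an eigenvalue with vanishing real part, the matrix analogue of the imaginary line from the complex-number discussion, since there the two candidate sign choices collide and strict positive definiteness of $U^\plus$ is lost. On the open dense set of invertible matrices whose orthogonal polar factor has no eigenvalue $\pm i$, the sign selection is locally constant and the whole map is continuous. The crux, where I expect the real work to lie, is to show that across this locus the factors still recombine continuously for \emph{real} matrices, exploiting the fact that complex eigenvalues of a real orthogonal matrix occur in conjugate pairs inside a common $2\times 2$ block — the structural feature that the authors advertise as distinguishing the real case from the genuinely discontinuous complex-scalar case — and to confirm that no lower-dimensional exceptional set needs to be excluded.
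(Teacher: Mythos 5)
Your Part~2 argument is correct and complete, and it is in fact tighter than the paper's own proof, which only exhibits the relation $U^\plus = UU_0$, $S=U_0S^\plus$ for the decomposition it constructs from the SVD rather than for an arbitrary signed-polar decomposition; your observation that $S^2=(S^\plus)^2$ forces $S^\plus=|S|$, and hence that any admissible $U_0$ commutes with $S^\plus$, is exactly the right way to pin the structure down. Your overall route (transfer a symmetric orthogonal sign factor from the standard polar factors) is also the same as the paper's, which works from $M=WD^\plus V^T$ and sets $U_0=V\Sigma V^T$.

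The genuine gap is in the existence step, precisely at the block-normal-form analysis, and it cannot be repaired. By your own Part~2, the sign matrix $\Sigma$ must be diagonal in an eigenbasis $V$ of $S^\plus$ (so that $U_0=V\Sigma V^T$ commutes with $S^\plus$ and $S=U_0S^\plus$ is symmetric). But the real block-normal form of $\tilde U=V^TUV$ lives in a \emph{different} orthogonal basis: choosing signs block-by-block there produces a symmetric involution commuting with $\tilde U$, not with $D^\plus$, so the resulting $S$ fails to be symmetric. The two requirements are incompatible in general, and in fact Part~1 as stated is false. Take $M=\begin{pmatrix}0&2\\1&0\end{pmatrix}$: then $S^\plus=\mathrm{diag}(1,2)$, $U=\begin{pmatrix}0&1\\1&0\end{pmatrix}$, and since $S^\plus$ has simple spectrum every admissible $U_0$ is $\mathrm{diag}(\sigma_1,\sigma_2)$ with $\sigma_i=\pm1$; then $UU_0=\begin{pmatrix}0&\sigma_2\\\sigma_1&0\end{pmatrix}$ has traceless symmetric part and is never positive definite. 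Note that $U$ here has eigenvalues $\pm1$, so your proposed characterization of the bad locus (orthogonal factor with eigenvalues $\pm i$) is also not the right one: the obstruction depends on the relative position of the invariant subspaces of $U$ and the eigenbasis of $S^\plus$, not on the spectrum of $U$ alone. The simplest counterexample is $M=R(\pi/2)$, the $90^\circ$ rotation, which mirrors the scalar boundary case $z=i$: only a positive \emph{semi}-definite $U^\plus$ is achievable. Consequently the continuity claim you deferred is also false as stated: following $R(\theta)$ across $\theta=\pi/2$, the factors jump from $(R(\theta),\mathrm{Id})$ to $(R(\theta-\pi),-\mathrm{Id})$, so no argument can close that "crux," and an exceptional set genuinely must be excluded (or positivity relaxed to semi-definiteness, at the price of uniqueness and continuity).

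For fairness, the paper's own proof stumbles at exactly the same point: its $\Sigma$ "carrying the signs of $W^*V$" is diagonal only in the eigenbasis of $W^*V$, whereas the construction requires $\Sigma$ diagonal in the basis $V$ coming from the SVD; these coincide only in special cases. So your proposal faithfully reproduces both the paper's strategy and its gap, while being more honest about where the difficulty sits.
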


\begin{proof} Any invertible matrix $M$ admits a complex-valued, singular value decomposition 
$
M=W D^\plus V^*,
$
where $W,V$ are unitary matrices and the diagonal matrix $D^\plus$ is {\em real and positive.} Here, $V^*$ stands for the adjoint of the matrix $V$. The standard polar decomposition is obtained immediately by writing
$ 
M= (W V^*) (V D^\plus V^*) = U S^\plus. 
$
For any diagonal matrices $\Sigma$ having $\pm 1$ on its diagonal, we have
$$
	M= (W \Sigma V^*) (V \Sigma D^\plus V^*) = \big( U U_0 \big) \big( U_0^* S^\plus \big), \quad U_0 = V \Sigma V^*
$$
Observe that for any $x \in \RN$ we have 
$
	<(W \Sigma V^*)x, x> =<\Sigma y, W^* V y>, \quad y= V^* x
$
Moreover, any unitary matrix is diagonalizable with orthogonal eigenspaces. In particular if one pick-up $\Sigma$ carrying the signs of $W^* V$, then the previous expression is positive, leading to the signed-polar decomposition:
$
M= (W \Sigma V^*) (V \Sigma D V^*) = U^\plus  S.  
$
\end{proof}

%----------------------------------------------------------------------------------------------------------------------------- 

\subsection{Polar and signed-polar decompositions of mappings} 

\paragraph*{Notation.} 

Let us repeat some of our notation. We write $\nabla f = \big(\del_d f \big)_{d=1, \ldots, D}$ for the gradient of a scalar-valued function $f: \RD \mapsto \RR$, 
and we denote by $\nabla \cdot S$ the divergence operator of a map $S: \RD\mapsto \RD$. The Laplace operator
is expressed as $\Delta = \sum_{i=1, \ldots, D} \del_i^2 = \nabla \cdot \nabla$, and 
the Jacobian is writen as $\nabla S = (\del_j S_i)_{i,j}$ (row-oriented). We also use the composition rule
$\nabla (\varphi \circ S) = (\nabla S) (\nabla  \varphi)\circ S$. 

We can work on any open and convex subset $\Lambda \subset \RD$ endowed with the normalized Lebesgue measure 
$m$, with $m(\Lambda) =1$, but in practice we can simply pick up the unit cube $\Lambda = (0,1)^D$, 
and this is what we do in all our numerical tests.  
% We also use the notation $\mu(\Gamma)$ for the measure of a set $\Gamma$ with respect to a finite measure $\mu$. 
% We will often consider probability measures, that is, non-negative measures with unit total mass. 
%
Consider maps $S: y \in \Lambda \mapsto x=S(y) \in \Omega$ taking values in a convex and open subset $\Omega \subset \RD$.
Given a probability measure $\mu$ defined on $\Omega$ and a probability measure $\nu$ defined $\Lambda$, 
a map $S: (\Lambda, \nu) \mapsto (\Omega,\mu)$ is said to be measure-preserving, or to transport $\nu$ into $\mu$, 
provided 
$S_\sharp \nu = \mu$. In other words, one requires the following change of variable formula 
$\int_\Omega\varphi \, \mu = \int_\Lambda  (\varphi \circ S) \, \nu. $ for all test-functions $\varphi$.
Under suitable regularity on the map $S$, this change of variable implies the Jacobian equation
$\left(\mu \circ S \right) |\det \nabla S | = \nu$. 

%-----------------------------------------

\paragraph*{The standard polar factorization for mappings.}

By the theory of optimal transport \cite{Villani}, given a positive probability measure $\mu$ defined on $\Omega$, 
(satisfying $\supp \mu = \Omega$ and 
% $\mu \lesssim m$ (that is, $\mu$ is 
absolutely continuous 
with respect to the Lebesgue measure)
%  in the sense that $\mu \leq C \, m$ for some constant $C$), 
there exists a unique {\sl optimal transport map}
$S: (\Lambda,m) \mapsto (\Omega,\mu)$ transporting the Lebesgue measure $m$ on $\Lambda$ into $\mu$.
It is uniquely characterized by minimizing a suitable ``cost" functional. Precisely, to any map $S:\Lambda \to \Omega$ and probability measure $\mu = S_\# m$, we can associate the decomposition 
\bel{PF}
S= (\nabla h^\plus)\circ T, \qquad h^\plus: \Lambda \to \RR 
\text{ convex}, \qquad T_\#m = m,
\ee
which is called the {\em polar factorization} of $S$.
Hence, any map can be regarded as a gradient of a convex function $\nabla h^\plus $, modulo a Lebesgue measure-preserving map $T$. 
As for the polar decomposition of matrices, let us consider the application $S \mapsto (h^\plus,T)$
which is defined whenever $S$ is invertible in the sense that $\supp S_\#m = \Omega$ and $S_\#m \leq C \, m$. 
%As for polar decomposition of matrices, if one consider "non invertible" mappings, that is one allows $S_\#m$ to have a singular part (i.e. only $\supp S_\#m = \Omega$ holds), then the decomposition still holds, but uniqueness is lost over the "orthogonal part" $T$, since $S \mapsto h^\plus$ holds unique.
% To end this section, for further references in this paper, we recall that at heart of the polar factorization \eqref{PF} is 
The so-called Wasserstein distance between two probability measures $\mu_0$ on $\Omega$ and
$\mu_1$ on $\Lambda$
\bel{WA} 
    d\big(\mu_0,\mu_1 \big) = \Wbf_2(\mu_0,\mu_1 \big) = \inf_{S_\# \mu_1 = \mu_0} \int_\Omega |X-S(X)|_2^2 \, d\mu_0
\ee
allows one to characterize the polar factorization $S= \nabla h: \Lambda \mapsto \Omega$ by solving a minimization problem.

%--------------------------------------------------------------------------------------------------------------------------------

\paragraph*{The signed-polar factorization for mappings.}

As for matrices, we propose here a novel polar factorization for mappings 
$S= \nabla h \circ T$, in which the positivity condition is imposed here on the part $T$ rather than on the convex function $h$.  

\begin{proposition}[Signed polar factorization for maps] \label{SPD}
Let $\Lambda$ be a convex set and $S : \Lambda \to \RD$
%\in L^2(\Lambda,\RD)$ be 
be a map  satisfying $\supp S_\#m = \RD$ and $S_\#m \lesssim m$. Then there exists a unique Lebesgue measure-preserving map  $T^\plus$ and a real-valued $h: \Lambda \to \RR$ (which need not be convex) such that 
% s.t.
\be
	S= \nabla h \circ T^\plus, \quad T^\plus_\# m = m, \qquad \nabla T^\plus  \geq 0.
	%\quad h \in W^{1,2}(\Lambda,\RR). 
\ee
A connection between signed and polar decomposition is as follows: let $S = (\nabla h^\plus)\circ T$, $h^\plus$ convex, $T_\#m = m$ the standard polar factorization. Then there exists a Lebesgue measure-preserving $T_0= \nabla g$ such that 
%pointwise
\be
	T = T_0 \circ T^\plus, \qquad \nabla h = \nabla h^\plus \circ T_0.
\ee
\end{proposition}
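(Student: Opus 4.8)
The plan is to obtain the signed factorization by sign-correcting the measure-preserving factor in the \emph{standard} polar factorization \eqref{PF}, in exact parallel with the way Proposition~\ref{PFU} corrects the matrix polar decomposition. First I would invoke Brenier's theorem \cite{Villani}: under the hypotheses $\supp S_\# m = \RD$ and $S_\# m \lesssim m$, the pushforward $\mu = S_\# m$ is absolutely continuous with full support, so the Brenier map $\nabla h^\plus$ transporting $m$ to $\mu$ is well defined and invertible, and $T = (\nabla h^\plus)^{-1}\circ S$ yields the polar factorization $S = (\nabla h^\plus)\circ T$ with $h^\plus$ convex and $T_\# m = m$. The whole problem then reduces to splitting the measure-preserving map $T$ as $T = T_0 \circ T^\plus$, where $T_0 = \nabla g$ is a measure-preserving \emph{gradient} map (the analogue of the symmetric orthogonal $U_0$) and $T^\plus$ is measure-preserving with $\nabla T^\plus \geq 0$ (the analogue of the positive-definite orthogonal factor $U^\plus$), in such a way that $\nabla h := (\nabla h^\plus)\circ T_0$ is again a gradient field. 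Setting $T^\plus = T_0^{-1}\circ T$ one checks at once that $(\nabla h)\circ T^\plus = (\nabla h^\plus)\circ T_0 \circ T_0^{-1}\circ T = (\nabla h^\plus)\circ T = S$, which is the desired identity, and the connection formulas of Proposition~\ref{SPD} are read off directly.

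Second, I would construct $T_0$ by applying the matrix signed-polar decomposition of Proposition~\ref{PFU} to the linearization $\nabla S(y)$. At almost every $y$ that Jacobian admits a singular value decomposition, and Proposition~\ref{PFU} prescribes the diagonal sign matrix $\Sigma(y)$ and orthogonal basis $V(y)$ needed to render the orthogonal factor positive; the candidate $T_0 = \nabla g$ is then the measure-preserving gradient map whose Hessian realizes $U_0(y) = V(y)\Sigma(y)V(y)^T$ in the notation of that proof. Because the sign pattern is locally constant away from the singular set where an eigenvalue vanishes, $T_0$ is well defined, and being built from a symmetric matrix field it is a pointwise-symmetric candidate gradient, while $|\det| = 1$ gives measure preservation. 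With this choice $\nabla T^\plus = \nabla(T_0^{-1}\circ T)\geq 0$ precisely because $\Sigma$ was selected to make the associated matrix quadratic form positive, exactly as in Proposition~\ref{PFU}.

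The third and decisive step is to check that $\nabla h = (\nabla h^\plus)\circ T_0$ is curl-free, so that it integrates to a genuine (in general non-convex) scalar $h$. Its Jacobian is $(\nabla^2 h^\plus\circ T_0)\,\nabla T_0$, a product of two symmetric matrices, and this is symmetric exactly when the two factors commute; this is the map-level counterpart of the commutation $U_0 S^\plus = S^\plus U_0$ used in Proposition~\ref{PFU}, which there is automatic because $U_0$ and $S^\plus$ share the eigenbasis $V$. I expect this global integrability to be the main obstacle: pointwise the matrix proposition supplies everything, but assembling the pointwise sign choices into a single globally defined measure-preserving gradient map $T_0$ while keeping $(\nabla h^\plus)\circ T_0$ curl-free requires the shared-eigenbasis structure to persist across $\Lambda$ and degenerates on the zero-set of the eigenvalues. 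Controlling this set and the attendant regularity is where the real analytic work lies, and is presumably why sufficient regularity of $S$ is assumed throughout.

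Finally, for uniqueness I would argue as in the matrix case. Given two signed factorizations $S = \nabla h_1 \circ T_1^\plus = \nabla h_2 \circ T_2^\plus$, composing and using invertibility reduces the claim to showing that a measure-preserving map which is simultaneously a gradient and has positive semidefinite Jacobian must be the identity: such a map is the gradient of a convex potential (the condition $\nabla T^\plus\geq 0$ forces monotonicity, hence convexity of the potential) pushing $m$ forward to $m$, and is therefore the identity by the uniqueness in Brenier's theorem. This yields $T_1^\plus = T_2^\plus$ and then $h_1 = h_2$ up to an additive constant, establishing both the existence and the uniqueness asserted in Proposition~\ref{SPD}.
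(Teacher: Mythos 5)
Your route is genuinely different from the paper's: you start from Brenier's theorem and then try to split the measure-preserving factor $T$ of the standard factorization as $T = T_0 \circ T^\plus$, whereas the paper never uses the standard factorization to build the signed one. The paper applies the matrix signed-polar decomposition pointwise to the Jacobian field, $\nabla S = U^\plus W$, integrates the \emph{inner} factor first via the Hodge decomposition ($T^\plus = \Delta^{-1}\nabla\cdot U^\plus$, measure-preserving because $|\det \nabla T^\plus| = 1$), then sets $\nabla^2 h = W \circ (T^\plus)^{-1}$, and obtains the connection $T_0 = T \circ (T^\plus)^{-1}$ only afterwards, as a corollary. This ordering is not cosmetic: it is exactly what makes the construction non-circular, and it is where your proposal breaks down.

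The gap is in your second step, the construction of $T_0$. With the paper's convention $\nabla(\varphi \circ S) = (\nabla S)(\nabla\varphi)\circ S$, the factor sitting in the outer position of a composition has its Jacobian evaluated at shifted points: $\nabla T(y) = \nabla T^\plus(y)\,(\nabla T_0)(T^\plus(y))$. Hence the sign-correction matrix $U_0(y) = V(y)\Sigma(y)V(y)^T$ that you extract from the SVD of $\nabla S$ at $y$ is not what $\nabla^2 g$ must equal at $y$; it is what $\nabla T_0$ must equal at the point $T^\plus(y)$ --- and $T^\plus = T_0^{-1}\circ T$ is only defined once $T_0$ is. Your definition of $T_0$ is therefore circular. (This is precisely why the paper integrates $U^\plus$ rather than $U_0$: the inner factor satisfies the unshifted equation $\nabla T^\plus(y) = U^\plus(y)$, and only after $T^\plus$ exists and is invertible does the symmetric factor get transported by $(T^\plus)^{-1}$.) Moreover, even granting a consistent pointwise prescription, the existence of a scalar $g$ whose Hessian equals a prescribed symmetric orthogonal field is a nontrivial integrability statement --- pointwise symmetry is necessary but far from sufficient for a matrix field to be a Hessian --- and you yourself concede that both this and the curl-freeness of $(\nabla h^\plus)\circ T_0$ are ``where the real analytic work lies,'' without doing that work. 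Since producing $T_0$ is the load-bearing step of your plan (the rest is formal algebra), the proposal is missing its central argument. A smaller point: your uniqueness reduction assumes $T_2^\plus \circ (T_1^\plus)^{-1}$ is a gradient of a convex potential so that Brenier uniqueness applies, but a product $\nabla T_2^\plus\,\bigl((\nabla T_1^\plus)(\cdot)\bigr)^{-1}$ of such matrices, evaluated moreover at mismatched points, need be neither symmetric nor positive semidefinite; that reduction also needs justification, although here you attempt something the paper's own proof does not address at all.
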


Of course, the signed-polar decomposition is trivial in dimension $D=1$, while the standard polar factorization is non-trivial, corresponding to a re-ordering by increasing values. It generates a Lebesgue measure-preserving field $T$ that is a
 non-smooth re-arrangement of the function. The regularity property of our factorization 
may be viewed as an important advantage in some applications. 
For instance, consider a map $S:\Lambda \mapsto \RN$ with bounded total variation in the sense that its Jacobian is a measure-valued field of matrices. 
Then the signed-polar decomposition allows to give a meaning to $\nabla S = (\nabla T^\plus)(\nabla^2 h)\circ T^\plus$. while the formula $\nabla S = \nabla T (\nabla^2 h^\plus) \circ T$ for the standard polar decomposition is not well-defined.  

\begin{proof} Given an arbitrary mapping $S:\Lambda \mapsto \RD$,
let us consider its Jacobian $\nabla S$. 
%\in 
Consider the polar and signed-polar decomposition of matrices, say $\nabla S = U^\plus  W = U  W^\plus$, where $W$ defined on $\Lambda$ 
% \L 
is a square-integrable diagonal field of matrix, and $U^\plus$
%   \ 
is an orthogonal positive field of matrix, hence is bounded. 
Let us introduce the following equations
\bel{169}
\nabla T^\plus  = U^\plus,
\qquad 
\nabla P = W \circ (T^\plus)^{-1}
\ee
% Co 
Since the polar decomposition is unique for any field $S = (\nabla h) \circ T$, the equation $\nabla T^\plus  = U^\plus$ has a solution for any field $S$ with positive Jacobian, which
 can be expressed as (cf.~the Hodge decomposition in Section~\ref{hodge1}):
$T^\plus  = \Delta^{-1} \nabla \cdot U^\plus 
$. 
Furthermore, $T^\plus$ is Lipschitz continuous
since $\nabla T^\plus$ is orthogonal and positive. Observe that $| \det \nabla T^\plus  | = 1 $, hence we deduce $T:\Lambda \mapsto \Lambda$ is Lebesgue measure-preserving, $T_\#m = m$, and the composition $S \circ (T^\plus)^{-1}$ is meaningful.
In particular, we can solve the second equation as previously, computing explicitly 
$ 
 	P = \Delta^{-1} \nabla \cdot \Big( W \circ (T^\plus)^{-1} \Big).
$
In particular, since $\nabla P$ is a symmetric field of matrix, we must have $P = \nabla h$, for some $h: \Lambda \to \RR$.
Finally, we check that
$ 
	\nabla ((\nabla h) \circ T^\plus) = \nabla T^\plus  (\nabla^2 h) \circ T^\plus  = U^\plus  W = \nabla S,
$
hence, up to a constant, $P\circ T^\plus  = S$. 
Consider next the connection between polar and signed-polar factorizations, 
that is,
 $S= \nabla h \circ T^\plus  = \nabla h^\plus  \circ T$. We deduce $T_0 = T \circ (T^\plus)^{-1} = (\nabla h^\plus)^{-1} \circ \nabla h$ is Lebesgue measure-preserving, and has a symmetric Jacobian field of matrices. Using the same argument as above, we conclude that $T_0 = \nabla g$.
\end{proof}

%--------------------------------------------------------------------------------------------------------------------------------------------------

\subsection{A continuous algorithm for computing polar decompositions}
\label{hodge1}

\paragraph*{The Hodge decomposition.}

Here, $\Lambda =[0,1]^D$ is chosen to be the unit cube (but our analysis would hold for any convex subset of $\RD$) and we denote its boundary by $\Gamma$. 
The {\em Hodge} decomposition consists in decomposing a map $S: \Lambda \to \RD$ 
% is , 
 as an (for the $L^2$ norm) orthogonal sum of a gradient map plus a divergence-free map. 
 More precisely, there exists a unique harmonic function $h$ on $\Lambda$, 
 %\i 
 a divergence-free map $\zeta$ on $\Lambda$ 
 % \i 
 and a function $h_0$ vanishing on the boundary $\Gamma$ such that 
 % a 
 %the 
\be
\label{hodge}
S = \nabla (h + h_0) + \zeta,
\qquad
 \nabla \cdot \zeta  = 0.
\qquad
\zeta \cdot \eta = 0,
\qquad
\Delta h= 0, 
%\qquad 
\ee
where $\eta$ is the (outward) normal to $\Gamma$ and $\Delta^{-1}$
% :  
is the inverse Laplacian operator with vanishing boundary conditions. Then the components are determined (up to a constant for $h_0$) as ($\mathcal{L}$ being the Leray operator)
\bel{hodge1-d}
   \nabla h_0 = \Pi S = \nabla \Delta^{-1} \nabla \cdot S, 
   \qquad 
   \zeta = \mathcal{L} S = \big( \Id - \nabla \Delta^{-1}  \nabla \cdot  \big) S. 
\ee
%  
%----------------------------------------------------------------------------------------------------------------------------------------

\paragraph*{Inverse and composition of maps.} 

A mapping $S=S(t,\cdot)$, viewed as an input data, is a priori given (or computed in the applications). 
In particular, it might not define a proper mapping in the sense that $\det \nabla S >0$ might not hold. 
Consider the following equation with unknown $u:\Rd \mapsto \RR$ and prescribed data $v$ 
\bel{LAG}
u \circ S   = v.
\ee
Consider any map $S$
%  \i 
and its polar factorization (see \eqref{PF}), say 
$S = \left(\nabla h\right) \circ T$, with a convex function $h$ and a surjective map $T \in \Acal(\Lambda)$, 
with $S(\Lambda) = \Omega$. Then the following identity 
\bel{S-1}
S^{-1} = T^{-1} \circ \left(\nabla h \right)^{-1}: \RD \mapsto \Lambda
\ee
makes sense provided $\mu = S_\# m$ is dominated by the Lebesgue measure, and this 
leads us to the following solution of \eqref{LAG}: 
\bel{u_b}
 u  = v \circ T^{-1} \circ (\nabla h)^{-1}. 
\ee
To cope with the possibility that $\mu$ has a singular part with respect to the Lebesgue measure $m$, we should use a suitably generalized inverse.

%------------------------------------------------------------------------------------------------------------------------------

\paragraph*{Lebesgue measure-preserving maps.}

The following lemma shows that any path of divergence free vector generates a time-dependent Lebesgue measure-preserving maps. 

\begin{lemma}[Time-dependent Lebesgue measure-preserving maps] 
\label{TDLPM}
Consider a time-dependent path of Lebesgue measure-preserving map $T=T(t, \cdot)$ defined on $\Lambda$
%  \i 
with $T(t,\cdot)_\#m = m$. Then there exists a time-dependent divergence-free map $\chi=\chi(t, \cdot)$ on $\Lambda$ 
such that
    $\del_t T  = \chi \circ T$. 
\end{lemma}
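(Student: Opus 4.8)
The plan is to recover $\chi$ as the Eulerian velocity field of the flow $t \mapsto T(t,\cdot)$, namely to set $\chi(t,\cdot) = \big(\del_t T(t,\cdot)\big) \circ T(t,\cdot)^{-1}$, so that the relation $\del_t T = \chi \circ T$ holds by construction and only the divergence-free property remains to be checked. The first point to settle is the invertibility of $x \mapsto T(t,x)$ for each fixed $t$. Since $T(t,\cdot)$ is Lebesgue measure-preserving, $T(t,\cdot)_\# m = m$, the Jacobian equation $|\det \nabla T(t,\cdot)| = 1$ holds, so $T(t,\cdot)$ is everywhere a local diffeomorphism; working (as throughout the paper) with sufficiently regular, orientation-preserving maps of the convex domain $\Lambda$ onto itself, one upgrades this to a global diffeomorphism, and $\chi$ is then well defined.

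Second, I would verify that $\chi$ is divergence-free. The cleanest route is to differentiate the constraint $\det \nabla T(t,\cdot) = 1$ in time by Jacobi's formula, which gives
\be
0 = \del_t \det \nabla T = \det \nabla T \,\, \text{tr}\big( (\nabla T)^{-1} \, \nabla (\del_t T) \big).
\ee
Substituting $\del_t T = \chi \circ T$, so that $\nabla(\del_t T) = \big( (\nabla \chi)\circ T \big)\, \nabla T$, the trace collapses by similarity invariance to $\text{tr}\big( (\nabla\chi)\circ T\big) = (\nabla \cdot \chi)\circ T$. Since $\det \nabla T = 1$ never vanishes and $T(t,\cdot)$ is onto $\Lambda$, this forces $\nabla \cdot \chi = 0$ throughout $\Lambda$, which is exactly Liouville's theorem for measure-preserving flows.

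An alternative, which avoids differentiating a determinant, is to argue weakly: for any test-function $\varphi$ the change-of-variable formula gives $\int_\Lambda \varphi(T(t,x))\,dx = \int_\Lambda \varphi \, dm$, a quantity independent of $t$. Differentiating under the integral sign and then changing variables $y = T(t,x)$, legitimate since $|\det \nabla T| = 1$, yields $\int_\Lambda \nabla\varphi \cdot \chi \, dm = 0$ for every $\varphi$, which is precisely the weak form of $\nabla \cdot \chi = 0$ together with the boundary condition $\chi \cdot \eta = 0$ on $\Gamma$; the latter is what guarantees that the flow preserves $\Lambda$.

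The main obstacle is not the formal computation but its justification: establishing that $T(t,\cdot)$ is genuinely invertible with the regularity needed to define $\chi$ and to differentiate under the integral, and then passing from the weak identity to the pointwise statement $\nabla \cdot \chi = 0$. Under the blanket regularity assumptions adopted here these steps are routine, and the boundary contribution $\chi \cdot \eta = 0$ emerges automatically once one tests against functions $\varphi$ that need not vanish on $\Gamma$.
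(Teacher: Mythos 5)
Your proposal is correct, and your ``alternative'' weak argument is in fact precisely the paper's own proof: the paper defines $\chi = (\del_t T)\circ T^{-1}$, differentiates the identity $\int_\Lambda \varphi \circ T \, dm= \int_\Lambda \varphi\, dm$ in time, and changes variables back through $T^{-1}$ to obtain $\int_\Lambda \nabla\varphi \cdot \chi \, dm = 0$ for all test functions, i.e.\ the weak form of $\nabla\cdot\chi = 0$. Your primary route --- differentiating the pointwise constraint $\det \nabla T = 1$ via Jacobi's formula and invoking similarity invariance of the trace --- is a genuinely different (Liouville-type, pointwise) derivation of the same fact; it buys a pointwise rather than distributional conclusion, but at the price of demanding more regularity ($\nabla T$ and $\nabla\chi$ classical, $\det\nabla T$ differentiable in $t$) and of the sign/orientation bookkeeping needed to replace $|\det\nabla T|=1$ by $\det\nabla T = 1$. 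One point where you are actually more careful than the paper: the paper simply asserts that a measure-preserving map of $\Lambda$ is one-to-one so that $T^{-1}$ exists pointwise, whereas you at least sketch a justification via the Jacobian equation and the blanket smoothness assumptions; conversely, note that the paper's lemma statement does not include the boundary condition $\chi\cdot\eta = 0$, which your weak argument correctly identifies as coming for free when testing against $\varphi$ not vanishing on $\Gamma$. Finally, the paper's proof also records the converse implication (if $\del_t T = \chi\circ T$ with $\chi$ divergence-free, then $T$ stays measure-preserving), which the stated lemma does not require and you reasonably omit.
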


\begin{proof} 
% Let 
If $T(t,\cdot)_\#m = m$ is Lebesgue measure-preserving, it is
a one-to-one map from $\Lambda$ into itself and we can define $T^{-1}(t,\cdot)$ pointwise. 
For any smooth function $\varphi$, using the map $y = T(t,z)$, we have $\int_\Lambda  \varphi \circ T = \int_\Lambda  \varphi$, and we find 
$$
    0 = \frac{d}{dt} \int_\Lambda  \varphi = \frac{d}{dt} \int_\Lambda  \varphi \circ T  = \int_\Lambda  (\nabla \varphi) \circ T \cdot \del_t T = \int_\Lambda  (\nabla \varphi) \cdot (\del_t T) \circ T^{-1},
$$
showing that $(\del_t T)\circ T^{-1}$ is divergence-free. Conversely, assuming $T$ satisfying $ \del_t T  = \chi \circ T$, 
% \ 
then it is now clear by the same computation that $T$ is Lebesgue measure-preserving.
\end{proof} 

%-------------------------------------------------------------------------------------------------------------------

\paragraph*{A direct algorithm for polar factorizations.}

We arrive at the formulation of our continuous method for computing the polar factorization, 
%sm 
using a connection between factorization of matrix and maps.

\begin{proposition}[Factorization of smooth positive maps] 
\label{OTM2}
Let $S: \Lambda \to \RD$
% \in  
be smooth map with convex support $\supp S_\#m$ and positive Jacobian $\nabla S >0$. 
Consider its polar factorization $S = (\nabla h) \circ T$. Then, up to a constant, one has 
\be
	 T = \Delta^{-1} \nabla \cdot U, \qquad \nabla h = \Delta^{-1} \nabla \cdot  (P \circ T^{-1}), 
\ee
where $(P,U^+): \Lambda \mapsto \mathcal{M}(\RR^{D \times D})^2$ is 
the field of symmetric and orthogonal matrices generated by the polar decomposition of matrices $\nabla S = U P$. 
\end{proposition}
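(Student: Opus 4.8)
The plan is to reduce this statement to the construction already carried out in the proof of Proposition~\ref{SPD}, exploiting the uniqueness of the pointwise polar decomposition of matrices recorded in Proposition~\ref{PFU}. First I would differentiate the factorization $S=(\nabla h)\circ T$. Using the composition rule $\nabla(\varphi\circ S)=(\nabla S)(\nabla\varphi)\circ S$ componentwise (exactly as in the proof of Proposition~\ref{SPD}), this yields the Jacobian identity
\be
\nabla S = (\nabla T)\,\big((\nabla^2 h)\circ T\big),
\ee
in which $(\nabla^2 h)\circ T$ is symmetric and positive since $h$ is convex, while $\nabla T$ is orthogonal and satisfies $|\det \nabla T|=1$ since $T$ preserves the Lebesgue measure. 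Thus the right-hand side already exhibits $\nabla S$ as a product of an orthogonal positive matrix field times a symmetric positive matrix field.

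The next step is to compare this with the pointwise polar decomposition $\nabla S = U\,P$ of the matrix field, where $U$ is orthogonal (and positive, thanks to $\nabla S>0$) and $P$ is symmetric positive. Proposition~\ref{PFU} guarantees that on the set of invertible matrices this decomposition is unique; since both $(\nabla T)\big((\nabla^2 h)\circ T\big)$ and $U\,P$ are decompositions of $\nabla S$ of the same orthogonal-positive-times-symmetric-positive type, uniqueness forces the identifications
\be
\nabla T = U, \qquad (\nabla^2 h)\circ T = P.
\ee

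It then remains to invert these pointwise relations into the announced closed-form expressions. To recover $T$, I would solve $\nabla T = U$: taking the divergence and using $\Delta = \nabla\cdot\nabla$ gives $\Delta T = \nabla\cdot U$, whence $T = \Delta^{-1}\nabla\cdot U$ up to a harmonic term, fixed up to a constant by the boundary conditions on $\Gamma=\partial\Lambda$ built into $\Delta^{-1}$, exactly as in Proposition~\ref{SPD}. Similarly, from $(\nabla^2 h)\circ T = P$ we get $\nabla^2 h = P\circ T^{-1}$ (which is meaningful since $|\det\nabla T|=1$ makes $T$ invertible and measure-preserving), and since $\nabla\cdot(\nabla^2 h)=\nabla(\Delta h)=\Delta(\nabla h)$, inverting the Laplacian produces $\nabla h = \Delta^{-1}\nabla\cdot(P\circ T^{-1})$, again up to a constant. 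This matches the two formulas of the statement.

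The main obstacle I anticipate is not the algebra but the justification that the orthogonal factor $U$ of $\nabla S$ genuinely integrates to the measure-preserving map $T$, i.e.\ that the equation $\nabla T = U$ is solvable. Orthogonality of $U$ alone does not make $U$ curl-free, so solvability must be inherited from the existence of the factorization furnished by Proposition~\ref{SPD} (together with the positivity hypothesis $\nabla S>0$), rather than derived from the matrix decomposition in isolation; the compatibility of the boundary conditions used in $\Delta^{-1}$ with $T_\#m=m$ is the delicate point to verify carefully.
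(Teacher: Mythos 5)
Your proposal follows the paper's own proof essentially step for step: differentiate the factorization to get $\nabla S = \nabla T\,(\nabla^2 h)\circ T$, match this against the pointwise matrix polar decomposition $\nabla S = U P$ of \eqref{PFM}, identify $\nabla T = U$ and $\nabla^2 h = P \circ T^{-1}$, and recover $T$ and $\nabla h$ by applying $\Delta^{-1}\nabla\cdot$ to each identity. The paper's closing consistency check, $\nabla\big((\nabla h)\circ T\big) = U P = \nabla S$ hence $(\nabla h)\circ T = S$ up to a constant, is implicit in your derivational structure, so the overall architecture is the same.

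The one place where you diverge, and where your write-up contains a step that is false as stated, is the justification of the identification. You assert that $\nabla T$ is orthogonal ``since $T$ preserves the Lebesgue measure.'' Measure preservation yields only $|\det \nabla T| = 1$: a shear $T(x_1,x_2) = (x_1 + f(x_2),\, x_2)$ preserves Lebesgue measure yet has a non-orthogonal Jacobian. Orthogonality of $\nabla T$ is exactly the nontrivial content of the identification; without it, the uniqueness of the matrix polar decomposition (your appeal to Proposition~\ref{PFU}) cannot be invoked to force $\nabla T = U$ and $(\nabla^2 h)\circ T = P$. To be fair, the paper's proof is no more rigorous at this very point: it obtains the same identification by appealing to uniqueness of the polar factorization of maps, which likewise does not by itself deliver a pointwise matrix-level identification. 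Your final paragraph, observing that solvability of $\nabla T = U$ must be inherited from the existence of the factorization rather than derived from the matrix decomposition alone, shows you have located the delicate point precisely; but locating it is not the same as filling it, and the explicit reason you give (orthogonality from measure preservation) is the step that would fail under scrutiny.
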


\begin{proof} We start from the polar factorization $S = (\nabla h) \circ T$, thus
 $\nabla S = \nabla T (\nabla^2 h) \circ T$. 
Cconsider the Jacobian $\nabla S$
% \ 
and its polar decomposition in \eqref{PFM}, thus $\nabla S = U P$, where $P$ is a square-integrable field of symmetric positive matrix and $U$ is an orthogonal positive field of matrix.
Since the polar factorization $S = (\nabla h) \circ T$ is unique, we can identify both parts as follows: 
\bel{169}
 	\nabla T = U,\qquad \nabla^2 h = D \circ T^{-1}.
\ee
Consider the equation $\nabla T = U$ and recall that the Hodge decomposition 
yields the unique candidate to be a solution:
$
 	T = \Delta^{-1} \nabla \cdot U,
	%  \in  
$
which is Lipschitz continuous. 
Since $T$ is unitary, we can consider its inverse $T^{-1}$ and the composition $D \circ T^{-1}$ is meaningful.
% an 
In particular, we can solve the second equation as previously, by computing explicitly
$$
\nabla h = \Delta^{-1} \nabla \cdot \big( D \circ T^{-1} \big),
$$
leading to a solution $h: \Lambda \to \RR$. 
Finally, we check that 
$
	\nabla ((\nabla h) \circ T) = \nabla T (\nabla^2 h) \circ T = U D = \nabla S 
$
and, up to a constant, $P\circ T = S$.  
\end{proof}

%--------------------------------------------------------------------------------------------------------------------------------------------

\paragraph*{A steepest descent algorithm for polar factorizations.}

{

Proposition~\ref{OTM2} now suggests a direct method in order to compute the polar factorization. As for polar factorization of matrices, we introduce a steepest descent algorithm, as follows.
}

\begin{proposition}[Polar factorization of smooth convex maps via steepest descent] 
\label{OTM}
% Let $ 
Given a smooth map $S$ with positive Jacobian matrix $\nabla S \geq 0$, consider the  minimization problem
associated with the Leray operator $\Lcal$
\bel{GeneralWasserstein}
 \Tbar = \arg \min_{ T_\# m = m  } \int_\Lambda  | \mathcal{L} (S \circ T)|^2 \, m, 
\ee
which provides one with the
factorization $S \circ \Tbar = \nabla h$.
%  as 
% 
The steepest descent algorithm for this  problem amounts to compute a family of 
Lebesgue measure-preserving maps $t \mapsto T(t,\cdot)$ with positive Jacobian $\nabla T \geq 0$ satisfying\footnote{ For convex maps, the choice $\del_t T^{-1} = \zeta \circ T^{-1}$ is an alternate and simpler computational choice.}
\bel{OMA}
\aligned
& S \circ T = \nabla h + \zeta, \qquad\qquad\qquad \nabla \cdot \zeta = 0
\\
& \del_t T^{-1} = \mathcal{L} \big(\nabla^2 h \zeta \big) \circ T^{-1}, 
\qquad \del_t T = - (\nabla T) \mathcal{L} \big(\nabla^2 h \zeta \big),
\endaligned
\ee
where $\big(h, \zeta \big)(t,\cdot)$ is obtained via the Hodge decomposition \eqref{hodge} in $\Lambda$, 
and $T^{-1}(t,\cdot)$ denotes the inverse of $T$ and $\mathcal{L}$ the Leray operator. 
When $\supp S = \Omega$, consider a path $T=T(t,\cdot)$ of Lebesgue measure-preserving maps satisfying \eqref{OMA}. Then, as $t \to +\infty$, the map $t \to S \circ T(t,\cdot)$ converges strongly at an 
exponential rate toward the signed-polar factorization of $\overline{S}$:
\bel{equa-PFA}
  S \circ T(t,\cdot) \to \overline{S} = \nabla \phi,
  \qquad
  \phi: \Lambda \to \RR 
  \text{ is convex, as } t \to +\infty.
\ee
\end{proposition}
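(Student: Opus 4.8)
The plan is to treat the variational problem \eqref{GeneralWasserstein} as a constrained gradient flow on the manifold of Lebesgue measure-preserving maps and to exhibit an explicit Lyapunov functional whose decay forces the divergence-free defect $\zeta = \mathcal{L}(S\circ T)$ to vanish. First I would record that, writing the Hodge decomposition $S\circ T = \nabla h + \zeta$ with $\nabla\cdot\zeta = 0$ as in \eqref{hodge}, the cost in \eqref{GeneralWasserstein} is exactly $J(T) = \int_\Lambda |\zeta|^2\,m$. Since $J \geq 0$ and vanishes precisely when $S\circ T$ is a gradient, the minimizer $\Tbar$ is characterized by $\mathcal{L}(S\circ\Tbar) = 0$, i.e.\ $S\circ\Tbar = \nabla h$; existence of such a measure-preserving rearrangement is guaranteed by the polar factorization \eqref{PF} (equivalently Proposition~\ref{SPD}). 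The admissible infinitesimal variations of $T$ along the constraint are generated, by Lemma~\ref{TDLPM}, by divergence-free velocity fields, which is exactly the class to which the prescribed descent field $\mathcal{L}(\nabla^2 h\,\zeta)$ belongs.

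Next I would differentiate $J$ along the flow \eqref{OMA}. Writing $g = \mathcal{L}(\nabla^2 h\,\zeta)$ and using $\partial_t(S\circ T) = \nabla(S\circ T)\,\partial_t T$ together with $\partial_t T = -(\nabla T)\,g$, the chain rule collapses to $\partial_t(S\circ T) = -\nabla(S\circ T)\,g = -(\nabla^2 h + \nabla\zeta)\,g$. Since $\mathcal{L}$ is the self-adjoint orthogonal projection onto divergence-free fields and $\mathcal{L}\zeta = \zeta$, I obtain
\[
\tfrac{d}{dt}J = 2\int_\Lambda \zeta\cdot\partial_t(S\circ T)\,m
= -2\int_\Lambda \zeta^T(\nabla^2 h + \nabla\zeta)\,g\,m.
\]
The symmetric part reduces, again by self-adjointness of $\mathcal{L}$ and symmetry of $\nabla^2 h$, to $-2\,\|\mathcal{L}(\nabla^2 h\,\zeta)\|_{L^2}^2 = -2\,\|g\|_{L^2}^2$, while the remaining term $\int_\Lambda \zeta^T(\nabla\zeta)\,g = \tfrac12\int_\Lambda \nabla(|\zeta|^2)\cdot g$ integrates to zero because $\nabla\cdot g = 0$ and $g\cdot\eta = 0$ on $\Gamma$ (the boundary conditions built into the Leray operator \eqref{hodge1-d}). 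This yields the clean Lyapunov identity $\frac{d}{dt}J = -2\,\|\mathcal{L}(\nabla^2 h\,\zeta)\|_{L^2}^2 \leq 0$ and, in particular, confirms that \eqref{OMA} is the genuine steepest-descent direction.

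Finally, I would upgrade monotonicity to exponential convergence and identify the limit. The hypothesis $\nabla S \geq 0$ and the preserved constraint $\nabla T \geq 0$ (inherited from Lemma~\ref{TDLPM} and the orthogonality of the descent field) keep $\nabla(S\circ T) = \nabla^2 h + \nabla\zeta$ positive; passing to the limit $\zeta \to 0$ then gives $\nabla^2 h \geq 0$, so the limit is the gradient of a convex potential, namely the signed-polar factor $\overline{S} = \nabla\phi$ of \eqref{equa-PFA}. For the rate, I would establish the coercivity estimate $\|\mathcal{L}(\nabla^2 h\,\zeta)\|_{L^2}^2 \geq c\,\|\zeta\|_{L^2}^2 = c\,J$ on divergence-free fields: a uniform lower bound $\nabla^2 h \geq \lambda\,\Id$ combined with a Poincar\'e-type inequality does the job, since $\mathcal{L}(\nabla^2 h\,\zeta) = 0$ means $\nabla^2 h\,\zeta = \nabla p$, and testing against $\zeta$ gives $\int_\Lambda \zeta^T\nabla^2 h\,\zeta = 0$, whence $\zeta = 0$. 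This produces $\frac{d}{dt}J \leq -2c\,J$, hence $J(t) \leq J(0)\,e^{-2ct}$ and $S\circ T(t,\cdot) \to \nabla\phi$ strongly at an exponential rate.

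The hard part will be twofold: making the coercivity (spectral-gap) estimate uniform in $t$, which requires a time-independent lower bound on $\nabla^2 h(t,\cdot)$ and therefore control that the flow neither lets the potential degenerate nor loses convexity; and rigorously propagating the constraint $\nabla T \geq 0$ so that the limit is genuinely the convex signed-polar factor rather than merely some critical gradient. The remaining technical burden is global-in-time existence and the regularity of the coupled system $(T, h, \zeta)$ needed to justify the Hodge decomposition and the integrations by parts used above.
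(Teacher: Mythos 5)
Your derivation of the descent structure is correct and is, in substance, the same as the paper's: both arguments rest on Lemma~\ref{TDLPM} (admissible variations of $T$ are generated by divergence-free fields), on the Hodge decomposition $S\circ T=\nabla h+\zeta$, and on differentiating $J(T)=\int_\Lambda|\zeta|^2\,m$ to identify $\Lcal\big((\nabla^2h)\,\zeta\big)$ as the steepest-descent direction. The only real difference is computational: the paper differentiates in Lagrangian form, writing $\int_\Lambda|\zeta|^2=\int_\Lambda|S-\nabla h\circ T^{-1}|^2$ and discarding the $\del_t\nabla h$ term by Hodge orthogonality $\int_\Lambda\langle\zeta,\del_t\nabla h\rangle=0$, whereas you differentiate in Eulerian form, split $\nabla(S\circ T)=\nabla^2h+\nabla\zeta$, and kill the extra term $\int_\Lambda\zeta^T(\nabla\zeta)\,g=\tfrac12\int_\Lambda\nabla(|\zeta|^2)\cdot g$ by integrating by parts against the divergence-free, boundary-tangent field $g=\Lcal((\nabla^2h)\zeta)$. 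Both routes yield the same Lyapunov identity $\frac{d}{dt}J=-2\,\|\Lcal((\nabla^2h)\zeta)\|_{L^2}^2$ (the paper's displayed formula omits the factor $2$), and both confirm that \eqref{OMA} is the constrained gradient flow of \eqref{GeneralWasserstein}.

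Where you genuinely depart from the paper is the convergence claim \eqref{equa-PFA}: the paper's proof stops once the descent direction $\chi=\Lcal((\nabla^2h)\zeta)$ is identified, and it never addresses the exponential rate or the identification of the limit, so your third paragraph is an addition rather than a reproduction. Your sketch is the natural one: triviality of the kernel of $\zeta\mapsto\Lcal((\nabla^2h)\zeta)$ on divergence-free fields (if $\Lcal((\nabla^2h)\zeta)=0$ then $(\nabla^2h)\zeta$ is a gradient, and testing against $\zeta$ gives $\int_\Lambda\zeta^T(\nabla^2h)\zeta=0$, hence $\zeta=0$ when $\nabla^2h>0$), upgraded to a coercivity constant $c>0$ and hence $J(t)\leq J(0)e^{-2ct}$. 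But, as you say yourself, this is incomplete: the constant $c$ must be uniform in $t$, which requires a time-independent lower bound $\nabla^2h(t,\cdot)\geq\lambda\,\Id$ and propagation of $\nabla T\geq0$ along the flow, and neither is established --- by you or by the paper. The net assessment is that your proposal is at least as complete as the paper's own proof: essentially identical on the part the paper actually proves, and ahead of it (with honestly flagged gaps) on the part the paper asserts without proof.
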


\begin{proof}  
We write $\del_t T^{-1} \circ T = \chi$, where (by Lemma~\ref{TDLPM}) the map 
$\chi(t,\cdot)$ is divergence-free.  Considering the Hodge decomposition $S\circ T = \nabla h + \zeta$, where $\zeta(t,\cdot)$ is divergence free, we find 
$ 
\frac{d}{dt} \int_\Lambda | \zeta |_2^2 
  = \int_\Lambda <\nabla h \circ T^{-1} - S\cdot (\nabla^2 h) \circ T^{-1} \del_t T^{-1}>.
$
Composing back with $T$, we obtain 
$$
	\int_\Lambda < \nabla h - S\circ T  \cdot (\nabla^2 h)  (\del_t T^{-1})\circ T> = - \int_\Lambda <(\nabla^2 h) \zeta \cdot  \chi>, 
$$
thus the steepest descent algorithm reduces to $\chi(t,\cdot) = \mathcal{L} \big((\nabla^2 h) \zeta(t,\cdot) \big)$. 
\end{proof}
 
%---------------------------------------------------------------------------------------------------------
  
\subsection{A discrete algorithm for computing the polar decomposition}

\paragraph*{Computing the polar factorization.}

{
 
We now describe our discrete algorithm which generates a polar factorization, along the lines suggested by our theoretical study. From the numerical standpoint, our algorithm takes as an input any two distributions
 of the same dimension $x \in \RR^{N \times D}$ and $y \in \RR^{N \times D}$ and generates, as an output, 
 $z \in \RR^{N \times D}$ obtained by solving the discrete counterpart to \eqref{equa-PFA}, namely: 
\bel{PAF}
   y = \nabla_z h,
    \qquad (h,z) = \arg \min_{z \in \RR^{N \times D}, h \in \RR^{N} } \| y - \nabla_z h \|^2_{\ell^2}
\ee
This minimization problem relies on the Helmholtz-Hodge decomposition, which is computed using a kernel   
(cf.~\cite{PLF-JMM-SM-Tutorial} for further details). 
We emphasize that this algorithm assumes that the initial map is positive in the sense that 
its Jacobian is positive, that is, $\nabla_x h \ge 0$. To this purpose, a reordering algorithm is required
(and is described in \cite{PLF-JMM-SM-akra}). In particular, the operator $(\nabla_x)^{-1}$ is presented in \cite{PLF-JMM-SM-Tutorial}. 

}

\

{ 

{\bf Algorithm 1: computation of $y = \nabla_x h$ with a convex $h$.} 
 An admissible kernel $K$ and a threshold parameter $\epsilon>0$ are given, together with 
two distributions of points $x \in \RR^{N \times D}$ and $y \in \RR^{N \times D}$.
\bei

\item Update the variable $x$ to now be $x^\sigma$, where $\sigma = LSAP(d_k(x,y))$ denotes
 the permutation computed by the linear sum assignment problem (LSAP) described in \cite{PLF-JMM-SM-akra} and $d_K$ denotes the discrepancy error matrix.

\item As long as $\| y - \nabla_x h \|_{\ell^2} < \epsilon$ holds, perform {\sl iteratively}
the following steps: 
\bei

\item Compute the Helmholtz-Hodge decomposition $h  = (\nabla_x)^{-1} (y) \in \RR^{N}$
and  $\zeta  = y - \nabla_x h \in \RR^{N \times D}$.

\item Compute $\lambda = \arg \min_{\lambda > 0 } \| y - (\nabla_x h)(w) \|^2_{\ell^2}$
and  $w  = x - \lambda \zeta$.

\item Update $x$ to be now $x - \lambda \zeta$, which provides us with the numerical values of $x$ and $h$. 

\eei
 
\eei

}

%----------------------------------------------------------------------------------------------------------------------------------------

{ 

\paragraph*{The sampling technique.}

We illustrate the use of the polar factorization by presenting a function allowing to recover, from any input distribution $x \in \RR^{N_x \times D}$ and any integer $M>0$, a new 
distribution $z \in \RR^{M \times D}$ which has very similar statistical properties to the initial distribution $x$. 
We can use this sampling algorithm in a number of practical situations, as it allows us to
 produce test data in order to check other algorithms. 
 This algorithm is described as follows. 

\

{\bf Algorithm 2: computation of $M$ i.i.d. samples from an input distribution.}
A positive definite kernel $K$ and a threshold parameter $\epsilon>0$ are given, 
together with a distribution $y \in \RR^{N_x \times D}$ and an integer $M$.

\bei

\item Compute $x \in [0,1]^{N_x \times D}$, that is, take an i.i.d. sample of the uniform law on the unit cube.

\item Compute $z \in [0,1]^{N_x \times D}$ and $h \in [0,1]^{N_x}$ as
the polar factorization $y = \nabla_z h$, $h$ convex, by using Algorithm~1.
% \eqref{PAFA}.

\item Compute $w_M \in [0,1]^{M \times D}$, by taking an i.i.d. sample of the uniform law on the unit cube.
%
% \ite 
This provides us with the numerical values for $(\nabla_z h)(w_M)$.

\eei 

}
 
%-------------------------------------------------------------------------------------------------------

\begin{figure}
\includegraphics[width=.9\linewidth, height=.4\linewidth]{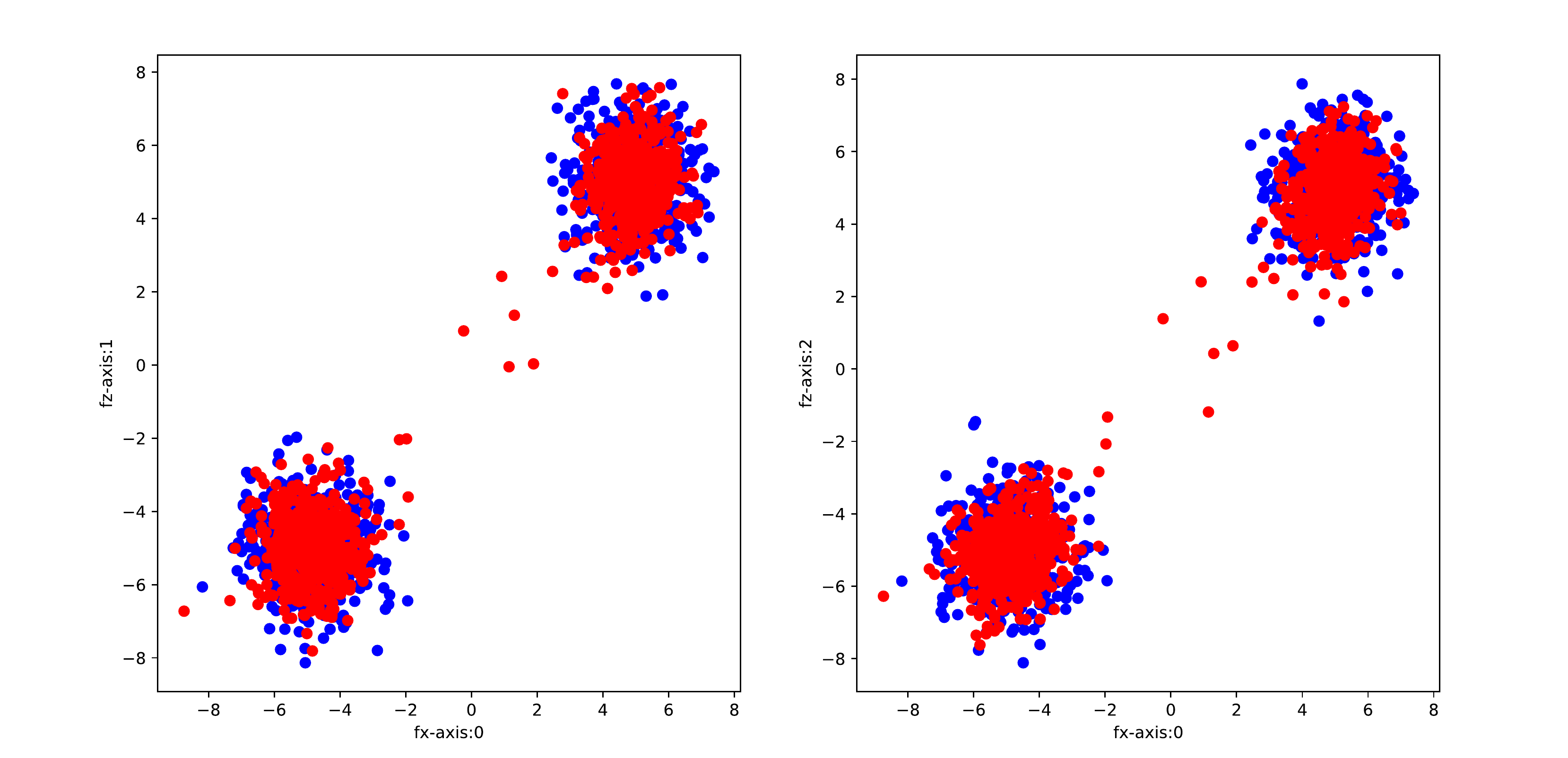} 
\centerline{(a): horizontal axis $e_1$,  vertical axis $e_2$\qquad\qquad (b):  horizontal  axis $e_1$, vertical axis $e_3$\qquad\qquad\qquad}
\caption{  (a): original distribution (blue color); (b): output distribution (red color).
%\\
%\bleu \tt Co 
}
\label{fig-ksjd22}
\end{figure}

%-----------------

\begin{longtable}[]{@{}rrrrrrr@{}}
\tabularnewline
\toprule
distribution& skew0 & kurtosis0 & skew1 & kurtosis1 & skew2 & kurtosis2\tabularnewline
\midrule
\endfirsthead
\toprule
distribution& skew0 & kurtosis0 & skew1 & kurtosis1 & skew2 & kurtosis2\tabularnewline
\midrule
\endhead
original & 0.0028489 & -1.847989 & 0.0024598 & -1.844245 & -0.0022913 &
-1.839244\tabularnewline
generated &0.0096915 & -1.882242 & 0.0191324 & -1.865481 & 0.0172387 &
-1.879981\tabularnewline
\bottomrule
\caption{Skewness and kurtosis on each axis}
\label{Table51}
\end{longtable}

\begin{longtable}[]{@{}rr@{}}
\tabularnewline
\toprule
statistic & pvalue\tabularnewline
\midrule
\endfirsthead
\toprule
statistic & pvalue\tabularnewline
\midrule
\endhead
0.048 & 0.1995737\tabularnewline
0.034 & 0.6101665\tabularnewline
0.037 & 0.5005674\tabularnewline
\bottomrule
\caption{Kolmogorov-Smirnoff test}
\label{Table52}
\end{longtable}

%-------------

{
An an illustration, let us choose $N_x=1000$ and consider a bi-modal distribution, defined by
%  as follows:
\be
	x^{2n} \text{ i.i.d. } \mathcal{N}(\mu^+,I_d), \qquad x^{2n+1} \text{ i.i.d. } \mathcal{N}(\mu^-,I_d), 
\ee
where $\mathcal{N}(\mu^\pm,I_d)$ denotes the normal law centered 
at $\mu^\pm  = (\pm 5,\ldots,\pm 5) \in \RR^D$ with unit variance.
In Figure~\ref{fig-ksjd22}, we plot the original distribution together with the numerical output distribution. Since these are three-dimensional distributions, we plot their marginal on the combinations of axis $(e_1,e_2)$ and $(e_1,e_3)$.

We then perform several tests. On one hand, we consider the marginal distributions of $x$ and $z$ on each axis. 
On the other hand, in order to also provide 
some information on the joint distribution, we compute the discrepancy error between the two distributions. Observe that no other general, statistical indicator is available that would
allow one to compare two probability distributions. 
Skewness is a measure of the degree of symmetry, while 
kurtosis is a measure of the concentration of the distribution.  
}
% 
%
%\item 
Skewness and kurtosis comparison tests on each axis; cf.~Table~\ref{Table51}. 
%
% \item 
Kolmogorov-Smirnov tests on each axis shows that we can not disprove the hypothesis that two independent samples are drawn from the same continuous distribution; cf.~Table~\ref{Table52}. 
%
%\item {
We also computed the discrepancy error, namely $d_k(\mu_X,\mu_Z) = 0.0812096$, which 
% be 
shows that the two distributions agree very well.

%===========================================================================

\clearpage

\section{Concluding remarks}

{ 

In summary, we presented a mesh-free numerical strategy which has led us to numerical algorithms enjoying the following features. 

\bei 

\item Once a kernel is chosen, the discretization of the relevant solution operators can be performed automatically, via the kernel-based discretization formulas presented in Section 2. There is no limitation to the type of operators, continuous or discrete, that we can cover. The corresponding formula are easy to implement in a code, as we are currently doing in CodPy \cite{PLF-JMM-SM-long}. 

\item The algorithms are endowed with quantitative error estimates which are based on the notion of discrepancy error. Our strategy indeed 
allows us to evaluate the relevance and accuracy of, both, the given data and the numerical solutions, which is an important property in the industrial applications. For instance, see \cite{PLF-JMM-review} for an application to mathematical finance. 
\eei

\bei 

\item Among other applications we presented, a novel application was proposed in Section 5 where we arrived at 
a new algorithm for the standard polar factorization as well as for a new, signed, polar factorization. 
We illustrated this algorithm with some preliminary numerical tests which demonstrate its interest for applications. 

\eei

In future work, it will interesting to establish rigorous convergence proofs for the class of numerical algorithms proposed in the present paper. While some considerations should be general in nature and apply to any given operator, such as the derivation of error estimates, we should also seek to establish model-dependent estimates,
 such as the ones in Section 4 for the Fokker-Planck-Kolmogorov system where we 
 took advantage of the specific structure of the model into account. We plan to continue the development of the CodPy code 
 in the future, and eventually make it available to the scientific community for academic as well as industrial applications. 
 }

%===========================================================================


\begin{thebibliography}{10} 

{
\bibitem{Afra-new}
{\sc M. Afrasiabi, M. Roethlin, and K. Wegener,}
Contemporary mesh-free methods for three-dimensional heat conduction problems, 
Arch. Comput. Meth. Engineering 27 (2020), 1413--1447. 
}

\bibitem{AKS2015} 
{\sc A. Antonov and M. Konikov and M. Spector,} 
The free boundary SABR: natural extension to negative rates, 
January 2015, technical report available at {\it ssrn.com/abstract=2557046.}  

\bibitem{BBO}
{\sc I. Babuska, U. Banerjee, and J.E. Osborn,}
Survey of mesh-less and generalized finite element methods: a unified approach,
{\it Acta Numer.} 12 (2003), 1--125.  

\bibitem{BTA}
{\sc A. Berlinet and C. Thomas-Agnan,}
{\it Reproducing kernel Hilbert spaces in probability and statistics,}
Springer Science, Business Media, LLC, 2004. 

\bibitem{BFBL}
{\sc M.A. Bessa, and J.T. Foster, T. Belytschko, and W.K. Liu,}
A mesh-free unification: reproducing kernel peridynamics,
{\it Comput. Mech.} 53 (2014), 1251--1264.  

\bibitem{BGM1997} 
{\sc A. Brace, and D. Gatarek and M. Musiela,} 
The market model of interest rate dynamics, 
{\it Math. Finance} 7 (1997), 127--154.

%{ 
 

 { 
\bibitem{Chen-new} 
{\sc J.-S. Chen, W.-K. Liu, M. C. Hillman, S.-W. Chi, Y.-P. Lian, and M.A. Bessa,}
% Jiun- 
Reproducing kernel particle method for solving partial differential equations, 
%
Encyclopedia for Computational Mechanics, 2018, Second edition.
 }
 
\bibitem{FGE} 
{\sc G.E. Fasshauer,} 
{\sl Mesh-free methods,} 
in ``Handbook of Theoretical and Computational Nanotechnology'', 
Vol. 2, 2006. 

 
% 

\bibitem{Haghighat}
{\sc E. Haghighat, M. Raissib, A. Moure, H. Gomez, and R. Juanes,}
A physics-informed deep learning framework for inversion and surrogate modeling in solid mechanics, 
{\it Comput. Methods Appl. Mech. Engrg.} 379 (2021), 113741.

\bibitem{Koester}
{\sc J.J. Koester and J.-S. Chen,}
Conforming window functions for mesh-free methods, 
{\it Comm. Numer. Methods Engrg.} 347 (2019), 588--621.

%\bi 
 
\bibitem{LCB}
{\sc Y. LeCun,  C. Cortes, and C.J.C. Burges,}
The MNIST database of handwritten digits, document available at the link:
http://yann.lecun.com/exdb/mnist.

\bibitem{PLF-JMM-1} 
{\sc P.G. LeFloch and J.-M. Mercier}, 
Revisiting the method of characteristics via a convex hull algorithm, 
{\it J. Comput. Phys.} 298 (2015), 95--112.  

\bibitem{PLF-JMM-2} 
{\sc P.G. LeFloch and J.-M. Mercier,}
A new method for solving Kolmogorov equations in mathematical finance,
{\it C. R. Math. Acad. Sci. Paris} 355 (2017), 680--686.   

\bibitem{PLF-JMM-review} 
{\sc P.G. LeFloch and J.-M. Mercier,}
The Transport-based Mesh-free Method (TMM). A short review, 
{\it The Wilmott journal} 109 (2020), 52--57. Also available at {\it ArXiv:1911.00992.} 

\bibitem{PLF-JMM-estimate} 
{\sc P.G. LeFloch and J.-M. Mercier,} 
Mesh-free error integration in arbitrary dimensions: a numerical study of discrepancy functions, 
{\it Comput. Methods Appl. Mech. Engrg.} 369 (2020), 113245. 

\bibitem{PLF-JMM-SM-Tutorial} 
{\sc P.G. LeFloch, J.-M. Mercier, and S. Miryusupov,}
CodPy: a tutorial, 
January 2021, technical report available at {\it ssrn.com/abstract=3769804.} 

\bibitem{PLF-JMM-SM-Advanced} 
{\sc P.G. LeFloch, J.-M. Mercier, and S. Miryusupov,}
CodPy: an advanced tutorial, 
January 2021, technical report available at {\it ssrn.com/abstract=3769804.}

\bibitem{PLF-JMM-SM-akra} 
{\sc P.G. LeFloch, J.-M. Mercier, and S. Miryusupov,} 
CodPy: a kernel-based reordering algorithm, 
January 2021, technical report available at {\it ssrn.com/abstract=3770557.}

\bibitem{PLF-JMM-SM-long} 
{\sc P.G. LeFloch, J.-M. Mercier, and S. Miryusupov,} 
{\sl 
CodPy: a Python library for machine learning, mathematical finance, and statistics,}  
textbook
in preparation. 

\bibitem{LiLiu}
{\sc S.F. Li and W.K. Liu,}
{\sl Mesh-free particle methods,} 
Springer Verlag, Berlin, 2004. 

\bibitem{Liu-2003}
{\sc G.R. Liu,}
{\sl Mesh-free methods: moving beyond the finite element method,} 
CRC Press, Boca Raton, FL, 2003.  

\bibitem{Liu-2016}
{\sc G.R. Liu,}
An overview on mesh-free methods for computational solid mechanics,
{\it Int. J. Comp. Methods} 13 (2016), 1630001.

{ 
\bibitem{Liu-new1}
{\sc W.-K. Liu, S. Jun, S. Li, J. Adee, and T. Belytschko,} 
% W 
Reproducing kernel particle methods for structural dynamics, 
{\it Intern. J. Numerical Meth. Fluids} 38 (1995), 1655--1679. 
}

{ 
\bibitem{Liu-new2}
{\sc W.-K. Liu, S. Jun, D. . Sihling, Y. Chen, and W. Hao,}  
% Win 
Multiresolution reproducing kernel particle method for computational fluid dynamics, 
{\it Intern. J. Numerical Meth. Fluids} 24 (2997), 1391--1415.
}

% 

\bibitem{JMM-SM} 
{\sc J.-M. Mercier and S. Miryusupov,}
Hedging strategies for net interest income and economic values of equity, 
Sept.~2019, available at {\it ssrn.com/abstract=3454813.}  

\bibitem{Nakano}
{\sc Y. Nakano,}
Convergence of mesh-free collocation methods for fully nonlinear parabolic equations,
{\it Numer. Math.} 136 (2017), 703--723. 

\bibitem{NWW} 
{\sc F. Narcowich, J. Ward, and H. Wendland,} 
Sobolev bounds on functions with scattered zeros, with applications to radial basis function surface fitting, 
{\it Math. of Comput.} 74 (2005), 743--763.

 
 

\bibitem{Salehi}
{\sc R. Salehi and M. Dehghan,}
A moving least square reproducing polynomial mesh-less method,
{\it Appl. Numer. Math.} 69 (2013), 34--58. 

\bibitem{Sirignano}
{\sc J. Sirignano and K. Spiliopoulos,}
DGM: a deep learning algorithm for solving partial differential equations,
{\it J. Comput. Phys.} 375 (2018), 1339--1364.  

{

\bibitem{Sod}
{\sc G.A. Sod,}
A survey of several finite difference methods for systems of nonlinear hyperbolic conservation laws,
J. Comput. Phys. 27 (1978), 1--31. 

}
\bibitem{Varga}
{\sc R.S. Varga,}
{\sl Matrix iterative analysis,}
 Springer Verlag, 2000.
 
 \bibitem{Villani}
{\sc C. Villani,}
{\it Optimal transport, old and new,}
Springer Verlag, 2009. 
  
\bibitem{Wendland} 
{\sc H. Wendland,}
Sobolev-type error estimates for interpolation by radial basis functions, 
in ``Surface fitting and multiresolution methods'' (Chamonix-Mont-Blanc, 1996), 
Vanderbilt Univ. Press, Nashville, TN, 1997, pp.~337--344. 

\bibitem{Wendland-book} 
{\sc H. Wendland,}
{\sl Scattered data approximation,} 
Cambridge Monograph, Applied Comput. Math., 
Cambridge University, 2005. 

\bibitem{ZhouLi}
{\sc J.X. Zhou and M.E. Li,}
Solving phase field equations using a mesh-less method,
{\it Comm. Numer. Methods Engrg.} 22 (2006), 1109--1115.  

\bibitem{ZB} 
{\sc B. Zwicknagl,} 
Power series kernels, 
{\it Constructive Approx.} 29 (2008), 61--84.

\end{thebibliography}
\end{document}